\newcommand{\T}{\theta}
\renewcommand{\(}{\left( }
\renewcommand{\)}{\right) }
\renewcommand{\theequation}{\theequation. \arabic{equation}}
\numberwithin{equation}{section}
\newtheorem{thm}{Theorem}[section]
\newtheorem{lem}[thm]{Lemma}
\newtheorem{prop}[thm]{Proposition}
\newtheorem{defn}[thm]{Definition}
\def\squarebox#1{\hbox to #1{\hfill\vbox to #1{\vfill}}}
\begin{document}
\title[ on the $q$-Hahn polynomials 
and the big $q$-Jacobi polynomials]
{A $q$-summation and the orthogonality relations for the $q$-Hahn polynomials 
and the big $q$-Jacobi polynomials}
\author{Zhi-Guo Liu}
\date{\today}
\address{ Department of Mathematics, East China Normal University, 500 Dongchuan Road,
Shanghai 200241, P. R. China} \email{zgliu@math.ecnu.edu.cn;
liuzg@hotmail.com}
\thanks{The  author was supported in part by
the National Science Foundation of China}
\thanks{ 2010 Mathematics Subject Classifications :  05A30,
33D15, 33D45,  11E25.}
\thanks{ Keywords: $q$-series, $q$-summation, $q$-Hahn polynomials, $q$-Jacobi polynomials,   
Nassrallah-Rahman integral }
\begin{abstract}
Using a general $q$-summation formula,  we derive a generating function for the $q$-Hahn polynomials,
which is used  to give a complete proof of the orthogonality relation for the $q$-Hahn polynomials. 
A new proof of the orthogonality relation for the big $q$-Jacobi polynomials is also given. 
A simple evaluation of the Nassrallah-Rahman integral is derived by using this summation formula.
A new $q$-beta integral formula is established, which includes the Nassrallah-Rahman integral as a special
case. The $q$-summation formula also allows us  to recover several strange $q$-series identities.

\end{abstract}
\maketitle
\section{Introduction}
Throughout this paper we assume that $q$ is a complex number such that $|q|<1$.
For any complex number $a$, the $q$-shifted factorials $(a; q)_n$ are defined by
\begin{equation*}
(a; q)_0=1,\quad (a; q)_n=\prod_{k=0}^{n-1}(1-aq^k), \quad n=1, 2,\ldots, \text{or}~\infty.
\end{equation*}
For convenience, we also adopt the following compact notation for the multiple
$q$-shifted factorial:
\begin{equation*}
(a_1, a_2,...,a_m;q)_n=(a_1;q)_n(a_2;q)_n ... (a_m;q)_n.
\end{equation*}

The basic hypergeometric series or $q$-hypergeometric series
${_r\phi_s}(\cdot)$ are defined as
\begin{equation*}
{_r\phi_s} \left({{a_1, a_2, ..., a_{r}} \atop {b_1, b_2, ...,
b_s}} ;  q, z  \right) =\sum_{n=0}^\infty \frac{(a_1, a_2, ...,
a_{r};q)_n} {(q,  b_1, b_2, ..., b_s ;q)_n}\left((-1)^n q^{n(n-1)/2}\right)^{1+s-r} z^n.
\end{equation*}

For any function $f(x)$, the  $q$-derivative of $f(x)$
with respect to $x,$ is defined as
\begin{equation*}
\mathcal{D}_{q,x}\{f(x)\}=\frac{f(x)-f(qx)}{x},
\end{equation*}
and we further define  $\mathcal{D}_{q,x}^{0} \{f\}=f,$ and
for $n\ge 1$, $\mathcal{D}_{q, x}^n \{f\}=\mathcal{D}_{q, x}\{\mathcal{D}_{q, x}^{n-1}\{f\}\}.$

Using some basic properties of  the $q$-derivative, we \cite{Liu} proved the following $q$-expansion formula.
\begin{thm} {\rm (Liu)}\label{liuthm1} If $f(a)$ is an analytic function of $a$ near $a=0$, then,  we have
\begin{equation*}
f(a)=\sum_{n=0}^\infty \frac{(1-\alpha q^{2n})(\alpha q/a; q)_n a^n}{(q, a;
q)_n}\left[ \mathcal{D}^n_{q, x}\{f(x)(x; q)_{n-1}\} \right]_{x=\alpha q}.
\end{equation*}
\end{thm}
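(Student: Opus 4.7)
The goal is to establish Liu's expansion by reducing to a countable family of basis identities and recognizing the result as a classical very-well-poised $q$-summation. Set
$$B_n(a):=\frac{(1-\alpha q^{2n})(\alpha q/a;q)_n a^n}{(q,a;q)_n},\qquad L_n[f]:=\bigl[\mathcal{D}_{q,x}^n\{f(x)(x;q)_{n-1}\}\bigr]_{x=\alpha q}.$$
Observe first that $B_n(a)$ is itself analytic at $a=0$: indeed, $a^n(\alpha q/a;q)_n=\prod_{k=1}^n(a-\alpha q^k)$ is a polynomial in $a$ of degree $n$, while $(a;q)_n$ has constant term $1$. Moreover, $L_n$ is a continuous linear functional on the space of germs analytic at $a=0$ (for $|\alpha q|$ small enough that the $q$-derivatives near $\alpha q$ make sense, which can always be arranged). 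Consequently, both sides of the claimed identity depend continuously and linearly on $f$, so by expanding $f(a)=\sum_{m\ge 0}c_m a^m$ and interchanging the (absolutely convergent) sums, it suffices to verify the identity for the monomials $f(a)=a^m$, $m\ge 0$.

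\textbf{Step 2 (computing the functional on monomials).} For $f(x)=x^m$, expand
$$(x;q)_{n-1}=\sum_{j=0}^{n-1}\binom{n-1}{j}_{\!q}(-1)^j q^{j(j-1)/2}x^j$$
by the $q$-binomial theorem, and apply the elementary rule $\mathcal{D}_{q,x}^n\{x^k\}=\frac{(q;q)_k}{(q;q)_{k-n}}x^{k-n}$ (with the convention that the expression vanishes for $k<n$). Evaluating at $x=\alpha q$ gives $L_n[x^m]$ as an explicit terminating ${_3\phi_2}$-type expression in $\alpha$, $q$, and $q^m$.

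\textbf{Step 3 (summing the expansion).} Substituting this value of $L_n[x^m]$ into $\sum_{n\ge 0}B_n(a)L_n[x^m]$ and interchanging the (double) summation produces, up to normalization, a very-well-poised basic hypergeometric sum in the parameters $a,\,\alpha,\,q^{-m}$. I would then recognize this series as a case of Rogers' ${_6\phi_5}$ summation,
$$\sum_{n\ge 0}\frac{(1-\alpha q^{2n})(\alpha;q)_n(b;q)_n(c;q)_n}{(1-\alpha)(q;q)_n(\alpha q/b;q)_n(\alpha q/c;q)_n}\left(\frac{\alpha q}{bc}\right)^{\!n}=\frac{(\alpha q,\alpha q/bc;q)_\infty}{(\alpha q/b,\alpha q/c;q)_\infty},$$
or of its terminating specialization (where one of $b,c$ is set to $q^{-m}$, causing the sum to terminate), and check that the closed-form value collapses to $a^m$.

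\textbf{Main obstacle.} The routine parts (Steps 1--2, the $q$-Leibniz/binomial manipulations, and the interchange of sums justified by $|q|<1$ and analyticity of $f$) should go through mechanically, if tediously. The real work is Step 3: matching the resulting double sum with Rogers' ${_6\phi_5}$ (or an equivalent terminating identity such as Jackson's ${_8\phi_7}$) while correctly bookkeeping the factors $(-1)^n$, $q^{n(n-1)/2}$, $(1-\alpha q^{2n})$, and $(\alpha q/a;q)_n$. A natural alternative is to avoid Step 3 altogether by verifying the dual relation $L_k[B_n]=\delta_{k,n}$ directly --- this still ultimately rests on the same summation, but shifts the bookkeeping to the computation of $\mathcal{D}_{q,x}^k\{B_n(x)(x;q)_{k-1}\}$ at $x=\alpha q$, where the factor $(x;q)_{k-1}$ cancels poles of $B_n$ when $k\le n$ and the desired evaluation then follows from cleanly chosen partial fractions.
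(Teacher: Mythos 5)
First, note that this paper does not actually prove Theorem~\ref{liuthm1}: it is quoted from the earlier work \cite{Liu} (``we \cite{Liu} proved the following $q$-expansion formula''), so there is no in-paper argument to compare yours against. Judged on its own terms, your proposal is a strategy outline rather than a proof. The reduction to monomials (Step 1) and the computation of your $L_n[x^m]$ via the $q$-binomial expansion of $(x;q)_{n-1}$ and the rule $\mathcal{D}_{q,x}^n\{x^k\}=\frac{(q;q)_k}{(q;q)_{k-n}}x^{k-n}$ for $k\ge n$ (Step 2) are sound in principle, but all of the content of the theorem is concentrated in Step 3, and there you only say what you \emph{would} do: the resulting nonterminating double sum is never written down, never matched against Rogers' ${_6\phi_5}$, and never shown to collapse to $a^m$. (The target is achievable --- for instance at $\alpha=0$, $m=1$ the required identity is $\sum_{n\ge1}(-1)^{n-1}q^{(n-1)(n-2)/2}a^n/(a;q)_n=a$, and the general case is indeed of very-well-poised type --- but asserting recognizability is not the same as exhibiting the match.) In addition, the interchange of the $m$- and $n$-sums in Step 1 needs an actual estimate: your $B_n(a)$ does not tend to $0$ on its own, so absolute convergence must come from decay of $L_n[a^m]$, which is never quantified, and the domain of validity in $a$ and $\alpha$ is left unspecified.

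Second, the fallback you offer --- verifying $L_k[B_n]=\delta_{k,n}$ --- does not ``avoid Step 3'': biorthogonality only shows that \emph{if} $f$ admits an expansion $\sum_n A_nB_n(a)$ to which $L_k$ may be applied termwise, \emph{then} necessarily $A_n=L_n[f]$; it says nothing about whether the series $\sum_n B_n(a)L_n[f]$ actually converges to $f(a)$, which is what the theorem asserts. (You are also too pessimistic about that computation: it does not rest on Rogers' summation at all. For $k>n$, $B_n(x)(x;q)_{k-1}$ is a polynomial of degree $k-1$, so its $k$th $q$-derivative vanishes; for $k\le n$, Jackson's formula in Lemma~\ref{Jaclemm} expresses $L_k[B_n]$ through the values $B_n(\alpha q^{j+1})$ with $j\le k$, and the factor $(\alpha q/x;q)_n$ annihilates every such value except $j=n$, forcing $k=n$; the single surviving term simplifies to $1$ directly.) So as it stands the proposal has a genuine gap at its center, and the auxiliary route proposed to close that gap is logically insufficient.
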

This theorem tells us that if $f(a)$ is analytic at $a=0$, then, it can be expanded uniquely in terms of $(\alpha q/a; q)_n a^n/(a; q)_n.$ 
Thus, if $f(a)$ has two series expansions in terms of $(\alpha q/a; q)_n a^n/(a; q)_n$, then,  
the corresponding coefficients of these two series must be equal.
This allows us to derive some combinatorial  identities by using the method of equating the coefficients.

Using Theorem \ref{liuthm1},  we \cite{Liu2013} established the following expansion theorem.
\begin{thm} {\rm (Liu)}\label{liuthm2}  If $f(x)$ is an analytic function near $x=0,$
then,  under suitable convergence conditions,  we have
\begin{align*}
&\frac{(\alpha q, \alpha ab/q; q)_\infty}
{(\alpha a, \alpha b; q)_\infty} f(\alpha a)\\
=\sum_{n=0}^\infty & \frac{(1-\alpha q^{2n}) (\alpha, q/a; q)_n (a/q)^n}
{(1-\alpha
)(q, \alpha a; q)_n}
\sum_{k=0}^n \frac{(q^{-n}, \alpha q^n; q)_k q^k}
{(q, \alpha b; q)_k}f(\alpha q^{k+1}).\nonumber
\end{align*}
\end{thm}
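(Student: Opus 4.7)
The plan is to apply Theorem~\ref{liuthm1} to
\[
F(a) := \frac{(\alpha q, \alpha ab/q;q)_\infty}{(\alpha a, \alpha b;q)_\infty}\,f(\alpha a),
\]
which is the left-hand side of the claimed identity. Under the stated convergence hypotheses, $F$ is analytic at $a=0$, so Theorem~\ref{liuthm1} yields
\[
F(a) = \sum_{n=0}^\infty \frac{(1-\alpha q^{2n})(\alpha q/a;q)_n a^n}{(q,a;q)_n}\,A_n,
\qquad
A_n := \bigl[\mathcal{D}_{q,x}^n\{F(x)(x;q)_{n-1}\}\bigr]_{x=\alpha q},
\]
and the task reduces to computing $A_n$ explicitly and to reorganising the resulting series.

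To evaluate $A_n$, I would first use Heine's $q$-binomial theorem to expand the ratio of infinite products as $(\alpha xb/q;q)_\infty/(\alpha x;q)_\infty = \sum_{j\ge 0}(b/q;q)_j(\alpha x)^j/(q;q)_j$, so that $F(x)(x;q)_{n-1}$ factors as $f(\alpha x)$ times a power series in $x$ times the polynomial $(x;q)_{n-1}$. The $q$-Leibniz rule, combined with the elementary formulas $\mathcal{D}_{q,x}^k\{(x;q)_{n-1}\} = (-1)^k (q;q)_{n-1}(q^k x;q)_{n-1-k}/(q;q)_{n-1-k}$ and $\mathcal{D}_{q,x}^\ell\{f(\alpha x)\} = \alpha^\ell(\mathcal{D}_q^\ell f)(\alpha x)$, then unwinds the $n$-fold $q$-derivative as a triple sum. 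Setting $x=\alpha q$ leaves $f$ entering only through the values $f(\alpha q^{k+1})$ for $k=0,1,\ldots,n$, and a $q$-Chu-Vandermonde collapse of the inner indices produces precisely the stated inner sum $\sum_{k=0}^n(q^{-n},\alpha q^n;q)_k q^k f(\alpha q^{k+1})/((q,\alpha b;q)_k)$.

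It remains to rewrite the expansion supplied by Theorem~\ref{liuthm1}, which is in the basis $\frac{(\alpha q/a;q)_n a^n}{(a;q)_n}$, in the basis $\frac{(\alpha, q/a;q)_n(a/q)^n}{(\alpha a;q)_n}$ used in the statement. The key input for this change of basis is the specialisation $f\equiv 1$: direct inspection shows that in this case Theorem~\ref{liuthm2} is exactly the $d\to\infty$ limit of Bailey's very-well-poised ${_6\phi_5}$ summation, giving an explicit expansion of $(\alpha q, \alpha ab/q;q)_\infty/(\alpha a, \alpha b;q)_\infty$ in the target basis; combining this bridge with the coefficient formula for $A_n$ yields Theorem~\ref{liuthm2} for general $f$. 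The principal obstacle will be the combinatorial bookkeeping in the middle step—tracking the $q$-shifts through Heine's expansion and the $q$-Leibniz rule, and identifying the precise $q$-hypergeometric reduction that telescopes the resulting triple sum into the claimed single inner sum is what makes the argument go through.
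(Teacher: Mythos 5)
Your overall strategy---feed the left-hand side into Theorem~\ref{liuthm1} and read off the coefficients via Jackson's formula---is the right instinct, but as set up it cannot produce the stated identity. The concrete problem is that you expand $F(a)$ in the variable $a$ itself. Theorem~\ref{liuthm1} (through Lemma~\ref{Jaclemm}) evaluates the expanded function at the points $x=\alpha q^{k+1}$, so your coefficients involve $F(\alpha q^{k+1})$, hence the values $f(\alpha\cdot\alpha q^{k+1})=f(\alpha^{2}q^{k+1})$; the same happens inside your $q$-Leibniz computation, where $\mathcal{D}^{\ell}_{q,x}\{f(\alpha x)\}$ evaluated at $x=\alpha q$ only ever samples $f$ on the geometric progression through $\alpha^{2}q$. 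For generic $\alpha$ these are not the points $\alpha q^{k+1}$ appearing in Theorem~\ref{liuthm2}, and no rearrangement of the inner sums can move them there. A second, independent gap is the proposed change of basis from $(\alpha q/a;q)_n a^n/(q,a;q)_n$ to $(\alpha,q/a;q)_n(a/q)^n/(q,\alpha a;q)_n$: calibrating it from the single special case $f\equiv 1$ is not a valid argument, since knowing both expansions of one particular function does not determine a transformation between the two families.

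Both difficulties disappear if you first replace $a$ by $\alpha a$ in Theorem~\ref{liuthm1} and apply it to $g(u):=\dfrac{(\alpha q,\,ub/q;q)_\infty}{(u,\,\alpha b;q)_\infty}\,f(u)$, so that the quantity being expanded is $g(\alpha a)$, which is exactly the left side of Theorem~\ref{liuthm2}. The basis then becomes $(q/a;q)_n(\alpha a)^n/(q,\alpha a;q)_n$, which differs from the target basis only by the $a$-independent factor $(\alpha;q)_n/(\alpha q)^n$, so no change of basis is needed; and no Heine expansion or $q$-Leibniz rule is needed either, because $g(x)(x;q)_{n-1}=\dfrac{(\alpha q;q)_\infty(xb/q;q)_\infty}{(\alpha b;q)_\infty(xq^{n-1};q)_\infty}f(x)$ and Lemma~\ref{Jaclemm} gives directly
\begin{align*}
\bigl[\mathcal{D}^n_{q,x}\{g(x)(x;q)_{n-1}\}\bigr]_{x=\alpha q}
&=(\alpha q)^{-n}\sum_{k=0}^{n}\frac{(q^{-n};q)_k q^k}{(q;q)_k}\cdot\frac{(\alpha q;q)_{n+k-1}}{(\alpha b;q)_k}\,f(\alpha q^{k+1})\\
&=\frac{(\alpha;q)_n}{(1-\alpha)(\alpha q)^n}\sum_{k=0}^{n}\frac{(q^{-n},\alpha q^n;q)_k q^k}{(q,\alpha b;q)_k}\,f(\alpha q^{k+1}),
\end{align*}
using $(\alpha q;q)_{n+k-1}=(\alpha q;q)_{n-1}(\alpha q^{n};q)_k$ and $(\alpha q;q)_{n-1}=(\alpha;q)_n/(1-\alpha)$. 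Multiplying by the basis element yields precisely the summand of Theorem~\ref{liuthm2}. (Note that the present paper does not prove this theorem but imports it from \cite{Liu2013}; the computation above is the short route that derivation relies on.)
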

By choosing $ f(x)=\prod_{j=1}^m \frac{(b_jx/q; q)_\infty}{(c_jx/q; q)_\infty}$ in
Theorem~\ref{liuthm2}, we \cite{Liu2013qd} obtained  the following general $q$-summation formula.
\begin{thm} {\rm (Liu)} \label{liunewthmb} If
$
\max\{|\alpha a|, |\alpha b|, |\alpha b_1|, |\alpha ac_1/q|, \cdots |\alpha b_m|, |\alpha a c_m/q|\}<1,
$  then,  we have
 \begin{align*}
&  \frac{(\alpha q, \alpha ab/q; q)_\infty}
{(\alpha a, \alpha b; q)_\infty} \prod_{j=1}^m \frac{(\alpha a b_j/q, \alpha c_j; q)_\infty}
{(\alpha a c_j/q, \alpha b_j; q)_\infty}\\
&=\sum_{n=0}^\infty  \frac{(1-\alpha q^{2n}) (\alpha, q/a; q)_n (a/q)^n}
{(1-\alpha)(q, \alpha a; q)_n}
{_{m+2}\phi_{m+1}}\left( {{q^{-n}, \alpha q^n, \alpha c_1, \cdots,  \alpha c_m}
\atop{\alpha b, \alpha b_1, \cdots, \alpha b_m}}; q, q\right).
 \end{align*}
 \end{thm}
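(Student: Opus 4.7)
The plan is to take Theorem \ref{liuthm2} and specialize it to the function $f(x) = \prod_{j=1}^m (b_j x/q; q)_\infty/(c_j x/q; q)_\infty$. Since each denominator factor $(c_j x/q; q)_\infty$ is a nonvanishing entire function in a neighborhood of the origin (its zeros lie at $x=q^{1-k}/c_j$, bounded away from $0$), this $f$ is analytic near $x=0$, so the hypothesis of Theorem \ref{liuthm2} is met.

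First I would compute the special values needed. On the left-hand side of Theorem \ref{liuthm2} we have $f(\alpha a)=\prod_{j=1}^m (\alpha a b_j/q;q)_\infty/(\alpha a c_j/q;q)_\infty$, which already matches part of the target product. For the inner sum I need $f(\alpha q^{k+1})=\prod_{j=1}^m (\alpha b_j q^k;q)_\infty/(\alpha c_j q^k;q)_\infty$. The key simplification is the elementary identity $(xq^k;q)_\infty=(x;q)_\infty/(x;q)_k$, applied termwise, which factors
\begin{equation*}
f(\alpha q^{k+1}) = \prod_{j=1}^m \frac{(\alpha b_j;q)_\infty}{(\alpha c_j;q)_\infty} \cdot \prod_{j=1}^m \frac{(\alpha c_j;q)_k}{(\alpha b_j;q)_k}.
\end{equation*}
The first product is independent of both $k$ and $n$, so I can pull it outside the double sum.

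After this factoring the remaining inner sum in Theorem \ref{liuthm2} becomes
\begin{equation*}
\sum_{k=0}^n \frac{(q^{-n},\alpha q^n,\alpha c_1,\ldots,\alpha c_m;q)_k\, q^k}{(q,\alpha b,\alpha b_1,\ldots,\alpha b_m;q)_k},
\end{equation*}
which is precisely the terminating ${_{m+2}\phi_{m+1}}$ displayed in the target theorem; with $r=m+2$ and $s=m+1$ the exponent $1+s-r$ vanishes, so the $q$-hypergeometric definition introduces no additional sign or $q$-quadratic factor, and $z=q$ supplies the $q^k$. Finally, moving the constant $\prod_{j=1}^m (\alpha b_j;q)_\infty/(\alpha c_j;q)_\infty$ from the right-hand side across to the left converts it into its reciprocal $\prod_{j=1}^m (\alpha c_j;q)_\infty/(\alpha b_j;q)_\infty$, and combining this with the earlier $\prod_{j=1}^m (\alpha a b_j/q;q)_\infty/(\alpha a c_j/q;q)_\infty$ yields exactly the factor $\prod_{j=1}^m (\alpha a b_j/q,\alpha c_j;q)_\infty/(\alpha a c_j/q,\alpha b_j;q)_\infty$ appearing in the statement.

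The proof is essentially a direct substitution, so the only point that needs care is convergence: the stated hypotheses $\max\{|\alpha a|,|\alpha b|,|\alpha b_j|,|\alpha a c_j/q|\}<1$ are chosen precisely to guarantee that every infinite $q$-Pochhammer appearing on both sides is well defined and that the rearrangement of factors is legitimate. The inner ${_{m+2}\phi_{m+1}}$ is a finite sum for each $n$, so no subtler convergence issue arises there; the main obstacle, such as it is, is simply the bookkeeping of the products, which the factorization above resolves cleanly.
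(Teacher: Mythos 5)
Your proposal is correct and is exactly the derivation the paper intends: the text states that Theorem \ref{liunewthmb} follows from Theorem \ref{liuthm2} with $f(x)=\prod_{j=1}^m (b_jx/q;q)_\infty/(c_jx/q;q)_\infty$, and your computation of $f(\alpha a)$ and $f(\alpha q^{k+1})$ via $(xq^k;q)_\infty=(x;q)_\infty/(x;q)_k$ supplies precisely the bookkeeping the paper leaves to the reader.
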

 This summation formula implies many nontrivial results in $q$-series as special cases.  For example,
 by setting $b_1=c$ and $c_1=bc/q$ in Theorem \ref{liunewthmb} and then using the $q$-Pfaff-Saalsch\"utz summation formula, we
 can obtain Rogers' $_6\phi_5$  summation formula, which is a $q$-analogue of Dougall's $_5F_4$
 summation formula.
\begin{thm} \label{rogersthm} For $|\alpha abc/q^2|<1,$ we have
\begin{align*}
{_6 \phi_5} \left({{\alpha, q\sqrt{\alpha}, -q\sqrt{\alpha}, q/a, q/b, q/c}
\atop{\sqrt{\alpha}, -\sqrt{\alpha},\alpha a, \alpha b, \alpha c}}; q, \frac{\alpha abc}{q^2}\right)
 =\frac{(\alpha q, \alpha ab/q, \alpha ac/q, \alpha bc/q; q)_\infty}
{(\alpha a, \alpha b, \alpha c, \alpha abc/q^2; q)_\infty}. \nonumber
\end{align*}
\end{thm}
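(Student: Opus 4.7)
\medskip
\noindent\textbf{Proof plan.} The strategy is exactly the one the paper hints at: specialize Theorem~\ref{liunewthmb} to $m=1$ with the choices $b_1=c$ and $c_1=bc/q$, reduce the inner hypergeometric series to a balanced ${}_3\phi_2$, and close with $q$-Pfaff--Saalsch\"utz.

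First I would compute the left-hand side under these substitutions. The prefactor
\[
\frac{(\alpha q,\alpha ab/q;q)_\infty}{(\alpha a,\alpha b;q)_\infty}
\cdot\frac{(\alpha ac/q,\alpha bc/q;q)_\infty}{(\alpha abc/q^2,\alpha c;q)_\infty}
\]
already matches the desired right-hand side of Rogers' formula, so everything hinges on simplifying the series side of Theorem~\ref{liunewthmb}. The inner series becomes
\[
{}_{3}\phi_{2}\!\left({q^{-n},\alpha q^n,\alpha bc/q\atop \alpha b,\alpha c};q,q\right),
\]
and one checks that it is Saalsch\"utzian, since $(\alpha q^n)(\alpha bc/q)q^{1-n}=\alpha^2 bc=(\alpha b)(\alpha c)$.

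Next I would apply the $q$-Pfaff--Saalsch\"utz summation
\[
{}_{3}\phi_{2}\!\left({q^{-n},A,B\atop C,ABq^{1-n}/C};q,q\right)=\frac{(C/A,C/B;q)_n}{(C,C/(AB);q)_n}
\]
with $A=\alpha q^n$, $B=\alpha bc/q$, $C=\alpha b$. This produces factors such as $(bq^{-n};q)_n$ and $(q^{1-n}/(\alpha c);q)_n$, which I would rewrite by the standard inversion identity $(aq^{-n};q)_n=(-a)^nq^{-n(n+1)/2}(q/a;q)_n$. After collecting powers of $q$ and sign factors, the ${}_3\phi_2$ evaluates cleanly to
\[
\frac{(\alpha bc/q)^n\,(q/b,q/c;q)_n}{(\alpha b,\alpha c;q)_n}.
\]

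Finally I would substitute this closed form back into Theorem~\ref{liunewthmb}. The $(a/q)^n$ from the outer factor combines with $(\alpha bc/q)^n$ to give $(\alpha abc/q^2)^n$, and the remaining $q$-shifted factorials assemble into $(\alpha,q/a,q/b,q/c;q)_n$ on top and $(q,\alpha a,\alpha b,\alpha c;q)_n$ on bottom. The very-well-poised factor $(1-\alpha q^{2n})/(1-\alpha)$ is then expanded as $(q\sqrt{\alpha},-q\sqrt{\alpha};q)_n/(\sqrt{\alpha},-\sqrt{\alpha};q)_n$, which produces precisely the ${}_6\phi_5$ on the left of Rogers' identity. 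The main obstacle is purely bookkeeping: tracking the $(-1)^n$, $q^{-n(n+1)/2}$ and $c^n$ factors when converting $(bq^{-n};q)_n$ and $(q^{1-n}/(\alpha c);q)_n$ to ordinary $q$-Pochhammer symbols, and confirming that they recombine to yield the clean argument $\alpha abc/q^2$. Convergence is automatic from the hypothesis $|\alpha abc/q^2|<1$.
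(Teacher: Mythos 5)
Your proposal is correct and follows exactly the route the paper indicates: take $m=1$ in Theorem~\ref{liunewthmb} with $b_1=c$, $c_1=bc/q$, observe the inner ${}_3\phi_2$ is balanced, sum it by $q$-Pfaff--Saalsch\"utz, and reassemble the very-well-poised factors into the ${}_6\phi_5$. Your intermediate evaluations (the prefactor matching the right-hand side, the closed form $(\alpha bc/q)^n(q/b,q/c;q)_n/(\alpha b,\alpha c;q)_n$ for the inner sum, and the recombination into the argument $\alpha abc/q^2$) all check out.
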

In \cite{Liu2013qd}, Theorem \ref{liunewthmb} has  been used  to derive several important results in number theory, such as a
general formula for sums of any number of squares.

It is obvious that \cite[Theorem~1.2]{Liu2013} is  the case $m=2$ of Theorem \ref{liunewthmb}, which
has been used to give a surprising proof of the orthogonality relation for the Askey-Wilson polynomials.

 In this paper we continue to discuss some amazing application of Theorem \ref{liunewthmb}. In particular,
 this theorem is used to provide new proofs of the orthogonality relations for the $q$-Hahn polynomials 
 and the $q$-big Jacobi polynomials, and can also be used to recover some strange $q$-series identities.

 This paper is organized as follows.   In Section~2, we will use Theorem~\ref{liuthm1} to prove the
 following expansion theorem for the two-variable analytic functions.    
 This expansion theorem implies that if a function $f(a, b)$ can be expanded in terms of
\[
\frac{(\alpha q/a; q)_n (\alpha q/b; q)_m a^n b^m}
  {(q, a;q)_n (q, b; q)_m},
\]
 then, this expansion is unique. This fact enables us to use the method of equating the coefficients 
 to derive some identities.
 \begin{thm}\label{liudoublethm} If $f(a, b)$ is a two-variable analytic function at
  $(0, 0)\in \mathbb{C}^2$, then,  there exists a unique sequence $\{c_{n, m}\}$ independent of
  $a$ and $b$ such that
  \[
  f(a, b)=\sum_{n=0}^\infty \sum_{m=0}^\infty c_{n, m}
  \frac{(1-\alpha q^{2n})(1-\beta q^{2m})(\alpha q/a; q)_n (\alpha q/b; q)_m a^n b^m}
  {(q, a;q)_n (q, b; q)_m}.
  \]
 \end{thm}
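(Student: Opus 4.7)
The natural strategy is to apply Theorem~\ref{liuthm1} twice, once in each variable, and then read off uniqueness from the uniqueness assertion of Theorem~\ref{liuthm1}.

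First, fix $b$ in a small neighborhood of $0$. Since $f(a,b)$ is jointly analytic at $(0,0)$, for each such $b$ the function $a\mapsto f(a,b)$ is analytic near $a=0$, so Theorem~\ref{liuthm1} gives
\begin{equation*}
f(a,b)=\sum_{n=0}^\infty \frac{(1-\alpha q^{2n})(\alpha q/a;q)_n\,a^n}{(q,a;q)_n}\,A_n(b),
\end{equation*}
where
\begin{equation*}
A_n(b):=\bigl[\mathcal{D}_{q,x}^n\{f(x,b)(x;q)_{n-1}\}\bigr]_{x=\alpha q}.
\end{equation*}

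Second, I would verify that each $A_n(b)$ is itself analytic near $b=0$. Writing the bivariate analytic function as $f(x,b)=\sum_{j,k\ge 0}\mu_{jk}\,x^jb^k$ with absolute convergence on a bidisc containing $(\alpha q,0)$, the operations of multiplication by $(x;q)_{n-1}$, applying $\mathcal{D}_{q,x}^n$, and substituting $x=\alpha q$ may be carried out termwise in the $b$-series. The result is an absolutely convergent power series of $b$ alone, so $A_n(b)$ is analytic at $b=0$.

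Third, apply Theorem~\ref{liuthm1} to $A_n(b)$ with the parameters $(\alpha,a)$ of that theorem replaced by $(\beta,b)$:
\begin{equation*}
A_n(b)=\sum_{m=0}^\infty \frac{(1-\beta q^{2m})(\beta q/b;q)_m\,b^m}{(q,b;q)_m}\,c_{n,m},
\qquad
c_{n,m}:=\bigl[\mathcal{D}_{q,y}^m\{A_n(y)(y;q)_{m-1}\}\bigr]_{y=\beta q}.
\end{equation*}
Since $c_{n,m}$ is built from iterated $q$-derivatives of $f$ evaluated at the fixed point $(\alpha q,\beta q)$, it depends only on $f$, not on $a$ or $b$. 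Substituting back yields the claimed double expansion. For uniqueness, if two such expansions represent $f(a,b)$, then fixing $b$ and comparing the $a$-expansions forces (by the uniqueness in Theorem~\ref{liuthm1}) the coefficient-functions of $b$ to coincide for every $n$; applying Theorem~\ref{liuthm1} once more, now in $b$, gives $c_{n,m}=c'_{n,m}$ for all $n,m$.

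\textbf{Main obstacle.} The only real technical point is the middle step: showing $A_n(b)$ is analytic at $b=0$. Bivariate analyticity of $f$ at $(0,0)$, rather than just $a$-analyticity of $f(\cdot,b)$ for each $b$, is what legitimizes the interchange of the $b$-series of $f$ with $\mathcal{D}_{q,x}^n$ and with the substitution $x=\alpha q$; once this interchange is justified by absolute convergence on a bidisc, everything else is a clean two-step iteration of Theorem~\ref{liuthm1}.
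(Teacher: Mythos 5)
Your proposal is correct and follows essentially the same route as the paper: expand in $a$ via Theorem~\ref{liuthm1}, check that the resulting coefficient functions $A_n(b)$ are analytic at $b=0$, expand each in $b$ via Theorem~\ref{liuthm1} again, and read off uniqueness. The only difference is in the one technical step: where you justify the analyticity of $A_n(b)$ by termwise manipulation of the double power series of $f$, the paper invokes Jackson's formula (Lemma~\ref{Jaclemm}) to write $\bigl[\mathcal{D}_{q,x}^n\{f(x,b)(x;q)_{n-1}\}\bigr]_{x=\alpha q}$ as a finite linear combination of the functions $f(q^{k+1}\alpha,b)$, each manifestly analytic in $b$, which avoids any interchange-of-limits argument.
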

 In Section~3, we use Theorem~\ref{liunewthmb} to give
 a complete proof of the orthogonality relation for the $q$-Hahn polynomials. 
 
 A new proof of the orthogonality relation for the big $q$-Jacobi polynomials 
 is derived in Section~4.  In Section~5, Theorem~\ref{liunewthmb} is used to give a new derivation of the Nassrallah-Rahman integral.
 Another proof of the Nassrallah-Rahman integral is given in Section~6.
The principal result in Section~7 is the following $q$-beta integral formula, which includes the Nassrallah-Rahman integral formula as a special case.
\begin{thm}\label{liunrintthm} Suppose that $q\alpha=a^2bcds $ and $\max\{|a|, |b|, |c|, |d|, |s|\}<1.$
Then we have the $q$-beta integral formula
\begin{align*}
&\int_{0}^{\pi} \frac{h(\cos 2\theta; 1)}{h(\cos \theta; a, b, c, d, s)}
{_3\phi_2}\({{ae^{i\theta}, ae^{i\theta}, \alpha uv/q}\atop{\alpha u, \alpha v}}; q, bcds\) d\theta\\
&=\frac{ 2\pi (abcd, abcs, abds, acds; q)_\infty}{(q, ab, ac, ad, as, bc, bd, bs, cd, cs, ds, q\alpha; q)_\infty}\\
&\quad \times
\sum_{n=0}^\infty \frac{(1-\alpha q^{2n}) (\alpha, q/u, q/v, ab, ac, ad, as; q)_n}
{(1-\alpha)(q, \alpha u, \alpha v, abcd, abcs, abds, acds; q)_n}(-\alpha^2 uv/a^2)^n q^{n(n-1)/2}.
\end{align*}
\end{thm}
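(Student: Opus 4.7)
The plan is to expand the ${_3\phi_2}$ factor in the integrand into its defining power series and interchange summation and integration, reducing the problem to a sum of Nassrallah-Rahman integrals that were evaluated in Sections~5 and~6; then to identify the remaining series as the claimed very-well-poised sum via Theorem~\ref{liunewthmb}.

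After interpreting the duplicated $ae^{i\theta}$ in the statement as the natural conjugate pair $ae^{i\theta},ae^{-i\theta}$, write
\[
{_3\phi_2}\({{ae^{i\theta},ae^{-i\theta},\alpha uv/q}\atop{\alpha u,\alpha v}}; q,bcds\)
=\sum_{n=0}^{\infty}\frac{(ae^{i\theta},ae^{-i\theta},\alpha uv/q;q)_n}{(q,\alpha u,\alpha v;q)_n}(bcds)^n.
\]
The hypothesis $\max\{|a|,|b|,|c|,|d|,|s|\}<1$ gives absolute and uniform convergence in $\theta\in[0,\pi]$, so sum and integral may be interchanged. Using $(ae^{i\theta},ae^{-i\theta};q)_n/h(\cos\theta;a)=1/h(\cos\theta;aq^n)$, the $n$-th integral equals the Nassrallah-Rahman integral with $a$ replaced by $aq^n$. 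Plugging in its evaluation, separating the $n$-dependent factors via $(Xq^n;q)_\infty=(X;q)_\infty/(X;q)_n$, and using the balance $q\alpha=a^2bcds$, one obtains
\[
I=C\sum_{n=0}^{\infty}\frac{(ab,ac,ad,as,abcds,\alpha uv/q;q)_n}{(q,abcd,abcs,abds,acds,\alpha u,\alpha v;q)_n}(bcds)^n,
\]
where $C$ is the explicit $n$-independent infinite-product prefactor coming from the Nassrallah-Rahman evaluation.

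Matching $I$ with the claimed right-hand side then reduces the theorem to the series transformation
\begin{align*}
&\sum_{n=0}^{\infty}\frac{(ab,ac,ad,as,abcds,\alpha uv/q;q)_n(bcds)^n}{(q,abcd,abcs,abds,acds,\alpha u,\alpha v;q)_n}\\
&\quad=\frac{(q\alpha,bcds;q)_\infty}{(abcds;q)_\infty}\sum_{n=0}^{\infty}\frac{(1-\alpha q^{2n})(\alpha,q/u,q/v,ab,ac,ad,as;q)_n}{(1-\alpha)(q,\alpha u,\alpha v,abcd,abcs,abds,acds;q)_n}\(-\frac{\alpha^2 uv}{a^2}\)^n q^{n(n-1)/2}.
\end{align*}
This is the main obstacle: the left side is a non-very-well-poised ${_6\phi_6}$-type series, while the right is very-well-poised with its six parameter pairs all multiplying to $q\alpha$ precisely by virtue of the balance condition $q\alpha=a^2bcds$.

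To establish the identity I would apply Theorem~\ref{liunewthmb} with $m=4$, taking $(a,b)$ in that theorem to be $(u,v)$ (so that $(q/a;q)_n=(q/u;q)_n$, $\alpha ab/q=\alpha uv/q$, and $\alpha b=\alpha v$), and choosing the four pairs $(b_j,c_j)$ so that $\{\alpha b_j,\alpha c_j\}$ generate the four target pairings $\{abcd,as\},\{abcs,ad\},\{abds,ac\},\{acds,ab\}$. The product side of Theorem~\ref{liunewthmb} then supplies both the prefactor $(q\alpha,bcds;q)_\infty/(abcds;q)_\infty$ and, after expanding the factors by the definition of $(\cdot;q)_\infty$, the summands of the non-very-well-poised series on the left; the series side of Theorem~\ref{liunewthmb} produces the very-well-poised series on the right once the inner ${_6\phi_5}$ in $k$ is summed by a classical identity. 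As a backup route, one may regard both sides as analytic functions of $(u,v)$ at the origin and invoke the uniqueness of expansion provided by Theorem~\ref{liudoublethm}; matching coefficients reduces the identity to a simpler single-variable sum evaluable by Rogers' ${_6\phi_5}$ formula (Theorem~\ref{rogersthm}).
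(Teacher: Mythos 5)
Your overall strategy---reduce the integral to a family of known $q$-beta integrals and then identify the resulting series---is in the spirit of the paper's proof, but the specific reduction breaks down at its first step. When you expand the ${_3\phi_2}$ by its defining series and integrate term by term, the $n$-th integral is
\[
\int_0^\pi \frac{h(\cos 2\theta;1)}{h(\cos\theta;aq^n,b,c,d,s)}\,d\theta ,
\]
i.e.\ the five-parameter integral with \emph{no} numerator factor $h(\cos\theta;r)$. That is the $r=0$ case of the Nassrallah--Rahman integral, and by (\ref{liubeqn}) it evaluates to an infinite product \emph{times a nonterminating ${_3\phi_2}$}, not to a pure product. Hence your $I$ is a double sum, and the single sum $C\sum_n\frac{(ab,ac,ad,as,abcds,\alpha uv/q;q)_n}{(q,abcd,abcs,abds,acds,\alpha u,\alpha v;q)_n}(bcds)^n$ is unjustified: the factor $(abcds;q)_n$ has no source in the raw expansion, and the formula is already inconsistent at $u=q$, where the theorem's right side collapses to the product (\ref{nhinteqn}) while your series combined with your claimed transformation does not reproduce it. The paper avoids this precisely by \emph{not} expanding the ${_3\phi_2}$ directly: it first applies Proposition~\ref{qtransfliu} with $x=(q/a)e^{i\theta}$, $y=(q/a)e^{-i\theta}$, $\alpha=a^2bcds/q$, converting $h(\cos\theta;abcds)^{-1}{_3\phi_2}(\cdots)$ into a very-well-poised sum whose $n$-th term carries $(ae^{i\theta},ae^{-i\theta};q)_n$ in the numerator \emph{and} $(abcdse^{i\theta},abcdse^{-i\theta};q)_n$ in the denominator. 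After multiplying by $h(\cos2\theta;1)h(\cos\theta;abcds)/h(\cos\theta;a,b,c,d,s)$, the $n$-th integrand becomes exactly $h(\cos2\theta;1)h(\cos\theta;abcdsq^n)/h(\cos\theta;aq^n,b,c,d,s)$, which is the balanced case (\ref{nhinteqn}) with $a\mapsto aq^n$ and therefore a pure product. That compensating numerator $h(\cos\theta;abcdsq^n)$ is what your expansion loses.

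The second half of your plan is also not carried out: you do not exhibit the $m=4$ specialization of Theorem~\ref{liunewthmb} that would yield your claimed transformation, and the inner ${_6\phi_5}(\ldots;q,q)$ arising there is neither balanced nor very-well-poised in general, so no ``classical identity'' sums it. If you want to keep a term-by-term route you would have to prove a genuinely new double-sum rearrangement; the economical fix is to apply Proposition~\ref{qtransfliu} before integrating, as the paper does.
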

 In Section~8, we use Theorem~\ref{liunewthmb} to recover some strange $q$-series identities.
\section{ the proof of Theorem~\ref{liudoublethm}}
To prove Theorem~\ref{liudoublethm}, we  need  the following
formula of F. H. Jackson \cite{Jackson1908}, which writes the
$n$th $q$-derivative of $f(x)$ in terms of $f(q^k x)$ for $k=0, 1, 2, \ldots, n.$
\begin{lem}{\rm(Jackson)}\label{Jaclemm}For any function $f(x),$ we have the identity
\begin{align*}
D_{q, x}^n \{f(x)\}=x^{-n} \sum_{k=0}^n \frac{(q^{-n}; q)_k}{(q; q)_k} q^k f(q^k x).
\end{align*}
\end{lem}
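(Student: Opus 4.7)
\medskip

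The plan is to prove Jackson's formula by induction on $n$. The base case $n=0$ is immediate, since the sum collapses to the single term $k=0$, giving $x^0\cdot f(x)=f(x)=\mathcal{D}_{q,x}^0\{f(x)\}$; equivalently one can take $n=1$ as the base case, in which the two-term sum reproduces $(f(x)-f(qx))/x$ after using $(1-q^{-1})q/(1-q)=-1$.

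For the inductive step, assume the formula at level $n$ and apply $\mathcal{D}_{q,x}$ to both sides. Writing
\[
g(x)=x^{-n}\sum_{k=0}^{n}\frac{(q^{-n};q)_k}{(q;q)_k}q^{k}f(q^{k}x),
\]
the quantity $g(qx)$ produces an overall factor $q^{-n}$ and shifts the argument of $f$ from $q^{k}x$ to $q^{k+1}x$. Subtracting and dividing by $x$ gives an expression of the form $x^{-n-1}\bigl[\,\sum_{k=0}^{n}(\cdots)f(q^{k}x)-q^{-n-1}\sum_{k=1}^{n+1}(\cdots)f(q^{k}x)\bigr]$ after reindexing the second sum with $k\mapsto k+1$. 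The $k=0$ and $k=n+1$ endpoints match the target coefficients at level $n+1$ directly; for the interior terms $1\le k\le n$, the coefficient of $f(q^{k}x)$ becomes
\[
\frac{(q^{-n};q)_{k-1}\,q^{k}}{(q;q)_{k}}\bigl[(1-q^{-n+k-1})-q^{-n-1}(1-q^{k})\bigr].
\]
The cross terms $-q^{k-n-1}$ and $+q^{k-n-1}$ inside the bracket cancel, leaving $(1-q^{-n-1})$, and the identity $(q^{-n-1};q)_{k}=(1-q^{-n-1})(q^{-n};q)_{k-1}$ collapses everything to $\frac{(q^{-n-1};q)_k}{(q;q)_k}q^{k}$, which is exactly the desired coefficient for $n+1$. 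This closes the induction.

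The main obstacle is purely combinatorial: spotting the cancellation that eliminates the $q^{k-n-1}$ terms and recognizing the one-step telescoping of the $q$-Pochhammer symbol $(q^{-n-1};q)_k$. As a cross-check, one can verify the formula on the monomial basis $f(x)=x^{m}$: the right-hand side evaluates to $x^{m-n}\sum_{k=0}^{n}\frac{(q^{-n};q)_k}{(q;q)_k}q^{k(m+1)}$, and the terminating $q$-binomial identity $\sum_{k=0}^{n}\frac{(q^{-n};q)_k}{(q;q)_k}z^{k}=(zq^{-n};q)_n$ (specialized to $z=q^{m+1}$) reduces this to $\prod_{j=0}^{n-1}(1-q^{m-j})\,x^{m-n}$, which matches the direct computation of $\mathcal{D}_{q,x}^{n}\{x^{m}\}$. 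Since both sides are linear and continuous on the space of analytic functions at the points $q^{k}x$, this monomial check gives an alternative proof once one invokes the power series expansion of $f$.
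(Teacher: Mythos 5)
Your proof is correct. Note first that the paper does not prove this lemma at all: it is quoted as a known result of Jackson (1908), so there is no in-paper argument to compare against, and your induction supplies a complete self-contained proof. The key computations check out: for the interior terms $1\le k\le n$, with $c_k^{(n)}=(q^{-n};q)_k\,q^k/(q;q)_k$, the combination $c_k^{(n)}-q^{-n}c_{k-1}^{(n)}$ does reduce, via the cancellation of the $q^{k-n-1}$ cross terms and the one-step telescoping $(q^{-n-1};q)_k=(1-q^{-n-1})(q^{-n};q)_{k-1}$, to the level-$(n+1)$ coefficient $c_k^{(n+1)}$; and the endpoint $k=n+1$ passes the (easy, but not quite ``direct'') check $-q^{-n}c_n^{(n)}=(q^{-n-1};q)_{n+1}\,q^{n+1}/(q;q)_{n+1}$, since $(1-q^{-n-1})q^{n+1}=-(1-q^{n+1})$. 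One bookkeeping slip: in your schematic display the scalar multiplying the reindexed sum should be $q^{-n}$, coming from $(qx)^{-n}$, not $q^{-n-1}$; the extra $q^{-1}$ only appears after you shift a power of $q$ from the Pochhammer weight into the bracket, which your subsequent explicit coefficient formula implicitly does, so the two displays are notationally inconsistent even though the computation that matters is right. Your monomial cross-check via the terminating $q$-binomial theorem $\sum_{k=0}^{n}\frac{(q^{-n};q)_k}{(q;q)_k}z^k=(zq^{-n};q)_n$ is also valid, but as a standalone proof it only covers $f$ with a convergent power series (which is all the paper ever needs, since Theorem 1.5 is applied to analytic functions), whereas your induction proves the identity for arbitrary $f$, matching the lemma's ``for any function'' statement.
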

Now we begin to prove Theorem~\ref{liudoublethm} using Theorem~\ref{liuthm1} and Lemma~\ref{Jaclemm}.
\begin{proof} Since $f(a, b)$ is analytic at $(a, b)=(0, 0),$ $f(a, b)$ is analytic at $a=0,$ regarding
$b$ as constant. From Theorem~\ref{liuthm1}, we have that
\begin{equation}
f(a, b)=\sum_{n=0}^\infty \frac{(1-\alpha q^{2n})(\alpha q/a; q)_n a^n}{(q, a;
q)_n}\left[ \mathcal{D}^n_{q, x}\{f(x, b)(x; q)_{n-1}\} \right]_{x=\alpha q}.
\label{eqn:dliu1}
\end{equation}
Appealing to the Jackson formula in Lemma~\ref{Jaclemm}, we easily deduce that
\begin{align*}
&\left[ \mathcal{D}^n_{q, x}\{f(x, b)(x; q)_{n-1}\} \right]_{x=\alpha q}\\
&={(q \alpha)}^{-n} \sum_{k=0}^n \frac{(q^{-n}; q)_k}{(q; q)_k} q^k
(q^{k+1}\alpha; q)_{n-1} f(q^{k+1}\alpha, b).
\end{align*}
Since $f(a, b)$ is analytic at $(a, b)=(0, 0)$, we know that, for each $k\in \{0, 1, 2, \ldots, n\}$,  $f(q^{k+1}\alpha, b)$ is analytic at $b=0.$ It follows that  the left-hand
side of the above equation is also an analytic function of $b$ near $b=0.$ Using Theorem~\ref{liuthm1} again,
we find that
\begin{equation}
\left[ \mathcal{D}^n_{q, x}\{f(x, b)(x; q)_{n-1}\} \right]_{x=\alpha q}\label{eqn:dliu2}
\end{equation}
\[
=\sum_{m=0}^\infty \frac{(1-\beta q^{2m})(q\beta/b; q)_m b^m}{(q, b; q)_m}
\left[ \mathcal{D}^m_{q, y} \mathcal{D}^n_{q, x}\{f(x, y)(x; q)_{n-1}(y; q)_{m-1}\} \right]_{(x=\alpha q, y=\beta q)}.
\]
Letting $c_{n, m}=\left[ \mathcal{D}^m_{q, y} \mathcal{D}^n_{q, x}\{f(x, y)(x; q)_{n-1}(y; q)_{m-1}\} \right]_{(x=\alpha q, y=\beta q)}$,  it is obvious that $c_{m, n}$ are uniquely determined by $f(x, y).$
Combining (\ref{eqn:dliu1}) with (\ref{eqn:dliu2}), we complete the proof of Theorem~\ref{liudoublethm}.
\end{proof}

\section{ the orthogonality relation for the $q$-Hahn polynomials}
The $q$-Hahn polynomials are defined as (see, for example \cite{kalninsMiller})
\begin{equation}
H_n(a, b, c, d; z)=\frac{(ac, ad; q)_n}{a^n}{_3\phi_2}\left({{q^{-n}, abcdq^{n-1}, az}\atop{ac, ad}}; q, q\right).
\label{hpeqn1}
\end{equation}

For simplicity, in this section we denote $H_n(a, b, c, d; z):=H_n(z)$ and
\begin{equation}
A_n(a, b)=\frac{(1-abcdq^{2n-1})(abcdq^{-1}; q)_n a^n}{(1-abcdq^{-1})(q, ac, ad; q)_n}.
\label{hpeqn2}
\end{equation}
Using Theorems \ref{liunewthmb}, we can obtain the following generating function of $H_n(z).$
\begin{prop}\label{hahngenpp} For $\max\{|ac|, |ad|, |asz|, |abcds|\}<1,$ we have
\[
\sum_{n=0}^\infty \frac{s^n(s^{-1}; q)_n }{(abcds; q)_n}A_n(a, b)H_n(z)
=\frac{(abcd, acs, ads, az; q)_\infty}{(abcds, ac, ad, asz; q)_\infty}.
\]
\end{prop}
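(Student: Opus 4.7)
The plan is to obtain Proposition \ref{hahngenpp} as a direct specialization of Theorem \ref{liunewthmb} with $m=1$. The idea is to choose the free parameters in Theorem \ref{liunewthmb} so that the inner ${}_3\phi_2$ on its right-hand side becomes precisely the ${}_3\phi_2$ appearing in the definition (\ref{hpeqn1}) of $H_n(z)$, and then to check that the outer infinite product on the left-hand side of Theorem \ref{liunewthmb} evaluates to the right-hand side of the proposition.

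To avoid a notational clash with the letters $a,b,c,d,s$ used in the proposition, I would first rename the parameters of Theorem \ref{liunewthmb} as $\alpha, A, B, B_1, C_1$. Matching the upper parameters $q^{-n}, \alpha q^n, \alpha C_1$ of the theorem's ${}_3\phi_2$ against the upper parameters $q^{-n}, abcdq^{n-1}, az$ of $H_n(z)$ forces $\alpha = abcd/q$ and $C_1 = qz/(bcd)$. Matching the lower parameters $\alpha B, \alpha B_1$ against $ac, ad$ forces $B = q/(bd)$ and $B_1 = q/(bc)$; the choice between the two assignments is immaterial because the resulting prefactor is symmetric in $\{ac,ad\}$ and $\{acs,ads\}$. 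The external factor $s^n (s^{-1};q)_n/(abcds;q)_n$ in the proposition then dictates $A = sq$, since this makes $q/A = s^{-1}$, $(A/q)^n = s^n$, and $\alpha A = abcds$.

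Next I would verify that, under these choices, the coefficient $\frac{(1-\alpha q^{2n})(\alpha, q/A;q)_n (A/q)^n}{(1-\alpha)(q, \alpha A;q)_n}$ of Theorem \ref{liunewthmb} becomes $\frac{s^n (s^{-1};q)_n}{(abcds;q)_n}\cdot\frac{(1-abcdq^{2n-1})(abcdq^{-1};q)_n}{(1-abcdq^{-1})(q;q)_n}$, so that inserting the missing factor $(ac,ad;q)_n/a^n$ from $H_n(z)$ and cancelling it against $a^n/(ac,ad;q)_n$ reproduces exactly $\frac{s^n (s^{-1};q)_n}{(abcds;q)_n}A_n(a,b)H_n(z)$. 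A short direct calculation then shows that the left-hand side of Theorem \ref{liunewthmb}, namely $\frac{(\alpha q, \alpha AB/q, \alpha A B_1/q, \alpha C_1;q)_\infty}{(\alpha A, \alpha B, \alpha A C_1/q, \alpha B_1;q)_\infty}$, collapses to $\frac{(abcd, acs, ads, az;q)_\infty}{(abcds, ac, ad, asz;q)_\infty}$, which is the right-hand side of Proposition \ref{hahngenpp}. The convergence hypothesis $\max\{|\alpha A|,|\alpha B|,|\alpha B_1|,|\alpha AC_1/q|\}<1$ of Theorem \ref{liunewthmb} specializes to $\max\{|abcds|,|ac|,|ad|,|asz|\}<1$, which is precisely the hypothesis of the proposition.

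The main (and only) obstacle is bookkeeping: keeping the two sets of letters carefully separated and performing the parameter substitutions in the prefactor without sign or exponent errors, since the variable $b$ plays one role in Theorem \ref{liunewthmb} and a completely different role in the $q$-Hahn polynomial. No nontrivial $q$-series transformation beyond the master identity is required, so once the renaming and the five assignments are fixed, the proof reduces to a routine verification.
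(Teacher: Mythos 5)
Your proposal is correct and follows essentially the same route as the paper: the paper also specializes Theorem~\ref{liunewthmb} with $m=1$, setting its parameter $a$ equal to $qs$, choosing $\alpha=abcd/q$, and replacing $(\alpha t,\alpha b_1,\alpha c_1)$ by $(ac,ad,az)$ — exactly your assignments $A=sq$, $B=q/(bd)$, $B_1=q/(bc)$, $C_1=qz/(bcd)$. The parameter matching, the resulting prefactor, the collapse of the infinite products, and the convergence condition all check out.
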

\begin{proof} Taking $m=1$ in Theorems \ref{liunewthmb} and then setting $a=qs$ and $b=t,$ we
deduce that
\begin{align*}
&\frac{(q\alpha, \alpha st, \alpha sb_1, \alpha c_1; q)_\infty}
{(q\alpha s, \alpha t, \alpha sc_1, \alpha b_1; q)_\infty}\\
&=\sum_{n=0}^\infty \frac{(1-\alpha q^{2n})(\alpha, s^{-1}; q)_n s^n}{(1-\alpha)(q, q\alpha s; q)_n}
{_3\phi_2}\left({{q^{-n}, \alpha q^n, \alpha c_1}\atop{\alpha t, \alpha b_1}}; q, q\right).
\end{align*}
Replacing $(t, b_1, c_1)$ by $(ac/\alpha, ad/\alpha, az/\alpha)$ in the above equation, we obtain
\begin{align*}
&\frac{(q\alpha, acs, ads, az; q)_\infty}
{(q\alpha s, ac, asz, ad; q)_\infty}\\
&=\sum_{n=0}^\infty \frac{(1-\alpha q^{2n})(\alpha, s^{-1}; q)_n s^n}{(1-\alpha)(q, q\alpha s; q)_n}
{_3\phi_2}\left({{q^{-n}, \alpha q^n, az}\atop{ac, ad}}; q, q\right).
\end{align*}
Putting $\alpha=abcdq^{-1}$ in the above equation and noticing the definition of
$A_n(a)$ and $H_n(z)$, we complete the proof of Proposition~\ref{hahngenpp}.
\end{proof}
It is well-known that $H_n(a, b, c, d; z)$ is symmetric in $a$ and $b$ (see, for example \cite[Eq. (2. 18)]{kalninsMiller},  \cite[Theorem~2]{LiuDMath}). Thus, by interchanging $a$ and $b$ in Proposition~\ref{hahngenpp} and replacing $s$ by $r$, we can obtain the
following proposition.
\begin{prop}\label{ahahngenpp} For $\max\{|bc|, |bd|, |brz|, |abcdr|\}<1,$ we have
\[
\sum_{m=0}^\infty \frac{r^m(r^{-1}; q)_m }{(abcdr; q)_m}A_m(b, a)H_m(z)
=\frac{(abcd, bcr, bdr, bz; q)_\infty}{(abcdr, bc, bd, brz; q)_\infty}.
\]
\end{prop}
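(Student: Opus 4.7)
The plan is to obtain Proposition~\ref{ahahngenpp} as a direct consequence of Proposition~\ref{hahngenpp} combined with the $a\leftrightarrow b$ symmetry of the $q$-Hahn polynomial. Since the entire assertion is parametric in $(a,b,c,d,s,z)$, once we know that the left side of Proposition~\ref{hahngenpp} is invariant under swapping $a$ and $b$, the identity will continue to hold after performing this swap everywhere and then renaming the parameter $s$ to $r$.

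First, I would record the symmetry $H_n(a,b,c,d;z)=H_n(b,a,c,d;z)$, citing \cite{kalninsMiller} and \cite{LiuDMath}. This is not visible from the definition \reff{hpeqn1}, where $b$ appears only inside the numerator $q$-shifted factorial $(abcdq^{n-1};q)_n$ of the terminating $_3\phi_2$, while $a$ additionally sits in $ac$, $ad$, $az$ and the prefactor $a^{-n}$. The symmetry is obtained by applying a Sears $_3\phi_2$ transformation to the right-hand side of \reff{hpeqn1}; this step is standard but is the one piece of nontrivial input, so it is the main obstacle. Having this in hand, one may freely relabel $a$ and $b$ in any identity that expresses a quantity in terms of $H_n(a,b,c,d;z)$.

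Next, I would apply the swap $a\leftrightarrow b$ to Proposition~\ref{hahngenpp}. On the left side, $H_n(z)=H_n(a,b,c,d;z)$ is unchanged by the symmetry, while the coefficient $A_n(a,b)$ defined in \reff{hpeqn2} becomes
\[
A_n(b,a)=\frac{(1-abcdq^{2n-1})(abcdq^{-1};q)_n b^n}{(1-abcdq^{-1})(q,bc,bd;q)_n},
\]
because $abcd$ is manifestly symmetric in $a$ and $b$ and only the factors $a^n$, $(ac;q)_n$, $(ad;q)_n$ are affected. On the right side, $(abcd;q)_\infty$ and $(abcds;q)_\infty$ are unchanged, while $(acs,ads,az;q)_\infty$ and $(ac,ad,asz;q)_\infty$ become $(bcs,bds,bz;q)_\infty$ and $(bc,bd,bsz;q)_\infty$ respectively.

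Finally, I would rename the summation parameter $n$ as $m$ and the variable $s$ as $r$, which converts the convergence hypothesis $\max\{|ac|,|ad|,|asz|,|abcds|\}<1$ into $\max\{|bc|,|bd|,|brz|,|abcdr|\}<1$ and yields precisely the stated identity. No further analytic justification is required, since the resulting identity has the same structure and domain of convergence as Proposition~\ref{hahngenpp}.
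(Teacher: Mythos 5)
Your proposal is correct and is essentially identical to the paper's own derivation: the paper likewise obtains Proposition~\ref{ahahngenpp} by invoking the $a\leftrightarrow b$ symmetry of $H_n(a,b,c,d;z)$ (citing \cite{kalninsMiller} and \cite{LiuDMath}) and then interchanging $a$ and $b$ in Proposition~\ref{hahngenpp} while renaming $s$ as $r$. Your extra remark that the symmetry follows from a Sears $_3\phi_2$ transformation is a reasonable justification of the input the paper simply cites as well known.
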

\begin{prop}\label{qdougallpp} For $\max\{|q\alpha s|, |q\alpha r|\}<1,$ we have
\[
\sum_{n=0}^\infty \frac{(1-\alpha q^{2n})(\alpha, s^{-1}, r^{-1}; q)_n (-\alpha rs)^n q^{n(n+1)/2}}
{(1-\alpha)(q, q\alpha s, q\alpha r; q)_n}
=\frac{(q\alpha, q\alpha rs; q)_\infty}{(q\alpha s, q\alpha r; q)_\infty}.
\]
\end{prop}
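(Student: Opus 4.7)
The plan is to obtain Proposition \ref{qdougallpp} as a direct specialization of Theorem \ref{liunewthmb}. I first take the degenerate case $m=0$: the product $\prod_{j=1}^m$ is empty and the inner $_{m+2}\phi_{m+1}$ collapses to a terminating $_2\phi_1$, giving
\[
\frac{(\alpha q, \alpha ab/q; q)_\infty}{(\alpha a, \alpha b; q)_\infty}
=\sum_{n=0}^\infty \frac{(1-\alpha q^{2n})(\alpha, q/a; q)_n (a/q)^n}{(1-\alpha)(q, \alpha a; q)_n}
\, {_2\phi_1}\left({{q^{-n}, \alpha q^n}\atop{\alpha b}}; q, q\right).
\]

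Next I evaluate the inner $_2\phi_1$ in closed form by applying the $q$-Chu--Vandermonde summation with $A=\alpha q^n$, $C=\alpha b$; this yields $\alpha^n q^{n^2}(bq^{-n}; q)_n/(\alpha b; q)_n$. Using the standard inversion $(bq^{-n}; q)_n = (-b)^n q^{-n(n+1)/2}(q/b; q)_n$, the inner sum simplifies to $(-\alpha b)^n q^{n(n-1)/2} (q/b; q)_n / (\alpha b; q)_n$. Substituting back produces
\[
\frac{(\alpha q, \alpha ab/q; q)_\infty}{(\alpha a, \alpha b; q)_\infty}
=\sum_{n=0}^\infty \frac{(1-\alpha q^{2n})(\alpha, q/a, q/b; q)_n (-\alpha ab/q)^n q^{n(n-1)/2}}{(1-\alpha)(q, \alpha a, \alpha b; q)_n}.
\]

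To finish, I would substitute $a = qs$ and $b = qr$. Then $q/a = s^{-1}$, $q/b = r^{-1}$, $\alpha a = q\alpha s$, $\alpha b = q\alpha r$, $\alpha ab/q = q\alpha rs$, while the collected $q$-powers simplify as $(-\alpha ab/q)^n q^{n(n-1)/2} = (-q\alpha rs)^n q^{n(n-1)/2} = (-\alpha rs)^n q^{n(n+1)/2}$. This reproduces Proposition \ref{qdougallpp} exactly.

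The main obstacle is purely bookkeeping: keeping signs and half-integer exponents of $q$ straight when inverting $(bq^{-n}; q)_n$ and when translating from the $(a, b)$ parametrization of Theorem \ref{liunewthmb} to the $(s, r)$ parametrization of the proposition. There is no deeper combinatorial or analytic content, since the claim is essentially the closed-form evaluation of a very-well-poised $_5\phi_4$ series that drops out of Theorem \ref{liunewthmb} once the inner terminating $_2\phi_1$ is summed by $q$-Chu--Vandermonde.
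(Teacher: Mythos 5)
Your proof is correct, and every computation checks out: the $m=0$ case of Theorem~\ref{liunewthmb} is legitimate (it corresponds to $f\equiv 1$ in Theorem~\ref{liuthm2}), the $q$-Chu--Vandermonde evaluation of the inner ${_2\phi_1}$ gives $(-\alpha b)^n q^{n(n-1)/2}(q/b;q)_n/(\alpha b;q)_n$ as you state, and the substitution $(a,b)=(qs,qr)$ lands exactly on the proposition, including the convergence condition $\max\{|q\alpha s|,|q\alpha r|\}<1$. The route differs slightly from the paper's. The paper's proof is a one-liner: it sets $c=0$ in the Rogers ${_6\phi_5}$ summation already stated as Theorem~\ref{rogersthm} and then replaces $(a,b)$ by $(qs,qr)$. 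Your intermediate identity
\[
\frac{(\alpha q, \alpha ab/q; q)_\infty}{(\alpha a, \alpha b; q)_\infty}
=\sum_{n=0}^\infty \frac{(1-\alpha q^{2n})(\alpha, q/a, q/b; q)_n (-\alpha ab/q)^n q^{n(n-1)/2}}{(1-\alpha)(q, \alpha a, \alpha b; q)_n}
\]
is precisely that $c\to 0$ degenerate case of Theorem~\ref{rogersthm}, so the two arguments meet at the same waypoint; you simply rebuild it from the master theorem with $m=0$ plus $q$-Chu--Vandermonde, whereas the paper obtains Theorem~\ref{rogersthm} from the $m=1$ case plus $q$-Pfaff--Saalsch\"utz and then degenerates it. What your version buys is self-containedness and the avoidance of the limit $c\to 0$ inside an infinite sum (which, strictly, needs a word about uniform convergence); what the paper's version buys is brevity, since Theorem~\ref{rogersthm} is already on the table.
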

\begin{proof} Setting $c=0$ in the Rogers $_6\phi_5$ summation in Theorem~\ref{rogersthm} and then replacing $(a, b)$ by $(qs, qr),$ we complete
the proof of the proposition.
\end{proof}
Askey and Roy \cite[Eq. (2. 8)]{AskeyRoy} used the Ramanujan $_1\psi_1$  summation to obtain the following interesting integral formula.
\begin{prop}\label{AskeyRoypp} For $\max\{|a|, |b|, |c|, |d|\}<1$ and $cd\rho \not=0,$ we have
\begin{align*}
&\frac{1}{2\pi}\int_{-\pi}^{\pi}\frac{(\rho e^{i\theta}/d, q d e^{-i\theta}/\rho, \rho c e^{-i\theta}, qe^{i\theta}/c\rho; q)_\infty}
{(ae^{i\theta}, be^{i\theta}, ce^{-i\theta}, de^{-i\theta}; q)_\infty} d\theta\\
&=\frac{(abcd, \rho, q/\rho, c\rho/d, qd/c\rho; q)_\infty}{(q, ac, ad, bc, bd; q)_\infty}.
\end{align*}
\end{prop}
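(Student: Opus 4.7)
My plan is to follow Askey and Roy's original strategy, which uses Ramanujan's bilateral summation
\[
\sum_{n=-\infty}^{\infty} \frac{(A;q)_n}{(B;q)_n} Z^n = \frac{(q, B/A, AZ, q/(AZ); q)_\infty}{(B, q/A, Z, B/(AZ); q)_\infty}, \qquad |B/A| < |Z| < 1,
\]
applied three times. First I would set $z = e^{i\theta}$ and rewrite the integral as the contour integral
\[
\frac{1}{2\pi i} \oint_{|z|=1} \frac{(\rho z/d, qd/(\rho z), \rho c/z, qz/(\rho c); q)_\infty}{(az, bz, c/z, d/z; q)_\infty} \frac{dz}{z},
\]
so that the task becomes extracting the coefficient of $z^0$ in the Laurent expansion of the integrand on $|z|=1$.

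Next I would split the integrand into the product
\[
\frac{(\rho z/d, qd/(\rho z); q)_\infty}{(az, d/z; q)_\infty} \cdot \frac{(\rho c/z, qz/(\rho c); q)_\infty}{(bz, c/z; q)_\infty}
\]
and apply Ramanujan's $_1\psi_1$ to each factor. The first factor matches the right-hand side of the summation with the choice $A = \rho/(ad)$, $B = \rho$, $Z = az$; the second with $A = \rho$, $B = \rho bc$, $Z = c/z$. In both cases the hypothesis $\max\{|a|,|b|,|c|,|d|\}<1$ secures the convergence condition $|B/A|<|Z|<1$ on $|z|=1$. After these two applications, the integrand has been re-expressed as an explicit product of $q$-shifted factorials times the product of two bilateral series, one in powers of $az$ and the other in powers of $c/z$.

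The coefficient of $z^0$ in this product forces the two bilateral indices to coincide, and the task collapses to the single bilateral sum
\[
\sum_{n=-\infty}^{\infty} \frac{(\rho/(ad);q)_n}{(\rho bc;q)_n} (ac)^n.
\]
A third invocation of Ramanujan's $_1\psi_1$, this time with $A = \rho/(ad)$, $B = \rho bc$, $Z = ac$, evaluates the sum in closed form. Assembling the three prefactors and cancelling the repeated $(\rho bc; q)_\infty$, $(q a d/\rho; q)_\infty$, and $(q; q)_\infty$ factors should yield precisely the product on the right-hand side of Proposition \ref{AskeyRoypp}.

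The principal technical obstacle is justifying the interchange of the double bilateral sum with the extraction of the constant term of the Laurent series. This is legitimate because, under the hypotheses on $a,b,c,d$, both Ramanujan series converge absolutely and uniformly on $|z|=1$, so Fubini permits term-by-term integration and no further subtleties arise.
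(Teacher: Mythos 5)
The paper does not actually prove this proposition --- it simply quotes it from Askey and Roy, attributing it to the Ramanujan ${}_1\psi_1$ summation --- and your argument is a correct reconstruction of exactly that proof: both inverse applications of the ${}_1\psi_1$ check out with your parameter choices, the $(\rho;q)_n$ factors cancel in the diagonal (constant-term) sum, and the third ${}_1\psi_1$ with $A=\rho/(ad)$, $B=\rho bc$, $Z=ac$ yields precisely the stated product, with absolute and uniform convergence on $|z|=1$ justifying the term-by-term extraction. The only caveat is that the hypotheses require only $cd\rho\neq 0$, so $a$ or $b$ may vanish, in which case your choices $A=\rho/(ad)$ and $B=\rho bc$ degenerate; this is repaired by noting that both sides are analytic in $a$ and $b$ on $\max\{|a|,|b|\}<1$ and extending by continuity from the generic case.
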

Using the above four propositions we can derive the orthogonality relation for the $q$-Hahn polynomials.

For brevity, we now introduce $L_0, L_n$ and $K(\theta)$ as follows
\begin{align}
L_0&=\frac{(abcd, \rho, q/\rho, c\rho/d, qd/c\rho; q)_\infty}{(q, ac, ad, bc, bd; q)_\infty}, \nonumber\\
L_n&=\frac{(1-abcdq^{-1})(q, ac, ad, bc, bd; q)_n q^{n(n-1)}(-cd)^n}{(1-abcdq^{2n-1})(abcdq^{-1}; q)_n}L_0,\label{hpeqn3}\\
K(\theta)&=\frac{(\rho e^{i\theta}/d, q d e^{-i\theta}/\rho, \rho c e^{-i\theta}, qe^{i\theta}/c\rho; q)_\infty}
{(ae^{i\theta}, be^{i\theta}, ce^{-i\theta}, de^{-i\theta}; q)_\infty}.\nonumber
\end{align}
The orthogonality relation for the $q$-Hahn polynomials can be stated in the following theorem.
Kalnins and Miller \cite[Eq. (2. 8)]{kalninsMiller} proved the $m\not=n$ case of the theorem,
and also obtained a recurrence relation to evaluate the $m=n$ case. The value in this case is
given below, and it is what Kalnins and Miller could have stated using their recurrence
relation.

\begin{thm}\label{qhahnorthm} Let $H_n(z)$ be the $q$-Hahn polynomials and $L_n, K(\theta)$ be
defined by (\ref{hpeqn3}) . Then we have the orthogonality relation
\[
\frac{1}{2\pi}\int_{-\pi}^{\pi} K(\theta)H_n(e^{i\theta})H_m(e^{i\theta})d\theta=L_n \delta_{m, n}.
\]
\end{thm}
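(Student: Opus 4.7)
The plan is to combine the four propositions through a generating--function integration. Put $z=e^{i\theta}$ in both Propositions~\ref{hahngenpp} and~\ref{ahahngenpp}, multiply the two resulting identities, multiply the product by $K(\theta)$, and integrate over $\theta\in[-\pi,\pi]$. The crucial structural point is that in the product of the two right-hand sides, $(ae^{i\theta},be^{i\theta};q)_\infty$ appears in the numerator and $(ase^{i\theta},bre^{i\theta};q)_\infty$ appears in the denominator; multiplying by $K(\theta)$ cancels the factor $(ae^{i\theta},be^{i\theta};q)_\infty$ against the corresponding factor in the denominator of $K(\theta)$. What remains under the integral sign is exactly the Askey--Roy integrand of Proposition~\ref{AskeyRoypp} with $(a,b)$ replaced by $(as,br)$. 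Evaluating that integral and collecting the constant factors produces
\[
\sum_{n,m\ge 0}\frac{s^n(s^{-1};q)_n}{(abcds;q)_n}\frac{r^m(r^{-1};q)_m}{(abcdr;q)_m}A_n(a,b)A_m(b,a)I_{n,m}
=L_0\cdot\frac{(abcd,abcdrs;q)_\infty}{(abcds,abcdr;q)_\infty},
\]
where $I_{n,m}:=\frac{1}{2\pi}\int_{-\pi}^{\pi}K(\theta)H_n(e^{i\theta})H_m(e^{i\theta})\,d\theta$ is the integral we want to evaluate.

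Next, I apply Proposition~\ref{qdougallpp} with $\alpha=abcd/q$ to expand the right-hand side as a single series indexed by $n$, whose $n$-th term is a constant
$T_n=\frac{(1-abcdq^{2n-1})(abcd/q;q)_n(-abcd/q)^n q^{n(n+1)/2}}{(1-abcd/q)(q;q)_n}$
times $\frac{(s^{-1};q)_n s^n(r^{-1};q)_n r^n}{(abcds;q)_n(abcdr;q)_n}$. Both sides of the displayed identity are now presented as expansions against the common basis $\frac{(s^{-1};q)_n s^n(r^{-1};q)_m r^m}{(abcds;q)_n(abcdr;q)_m}$, and the right-hand expansion is supported only on the diagonal $n=m$.

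The uniqueness of such double expansions guaranteed by Theorem~\ref{liudoublethm} (applied with $\alpha=\beta=abcd/q$ and with expansion variables $abcds$ and $abcdr$) forces the coefficients to match term by term. Off-diagonal matching yields $I_{n,m}=0$ for $n\ne m$, which is the orthogonality. Diagonal matching gives $I_{n,n}=L_0\,T_n/[A_n(a,b)A_n(b,a)]$. The main obstacle is the final bookkeeping: one must simplify $A_n(a,b)A_n(b,a)$, which contributes $(abcd/q;q)_n^2$, $(1-abcdq^{2n-1})^2$, $(q;q)_n^2$ and $(ab)^n$, combine it with $T_n$, and verify that the resulting ratio collapses to $L_n$ as defined in (\ref{hpeqn3}). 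Convergence of all intermediate manipulations is secured by temporarily restricting $|r|$ and $|s|$ to be sufficiently small; the final statement is independent of these auxiliary parameters.
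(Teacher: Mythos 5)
Your proposal is correct and follows essentially the same route as the paper: set $z=e^{i\theta}$ in Propositions~\ref{hahngenpp} and~\ref{ahahngenpp}, multiply, integrate against $K(\theta)$ via the shifted Askey--Roy integral \reff{hahneqn1}, re-expand the right-hand side by Proposition~\ref{qdougallpp} with $\alpha=abcd/q$, and equate coefficients using Theorem~\ref{liudoublethm}. The only difference is cosmetic: you correctly write the constant as $L_0$ where the paper's display has $A_0$, and you defer the same diagonal bookkeeping that the paper also leaves implicit.
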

\begin{proof} Replacing $a$ by $as$ and $b$ by $br$ in the Askey-Roy integral, we easily find that
\begin{equation}
\frac{1}{2\pi}\int_{-\pi}^{\pi} K(\theta) \frac{(ae^{i\theta}, be^{i\theta}; q)}{(ase^{i\theta}, bre^{i\theta}; q)_\infty} d\theta
=\frac{(abcdrs, \rho, q/\rho, c\rho/d, qd/c\rho; q)_\infty}{(q, acs, ads, bcr, bdr; q)_\infty}.
\label{hahneqn1}
\end{equation}
Letting $z=e^{i\theta}$ in Propositions~\ref{hahngenpp} and \ref{ahahngenpp} and then multiplying the
two resulting equations together, we obtain
\begin{align*}
&\sum_{n, m=0}^\infty \frac{(s^{-1}; q)_n (r^{-1}; q)_m s^n r^m}{(abcds; q)_n (abcdr; q)_m}A_n(a, b)A_m(b, a) H_n(e^{i\theta})H_m(e^{i\theta})\\
&=\frac{(abcd, abcd, acs, ads, bcr, bdr, ae^{i\theta}, be^{i\theta}; q)_\infty}
{(abcds, abcdr, ad, ac, bc, bd, ase^{i\theta}, bre^{i\theta}; q)_\infty}.
\end{align*}
Substituting the above equation into (\ref{hahneqn1}), we easily obtain the series expansion
\begin{align*}
&\sum_{n, m=0}^\infty \frac{(s^{-1}; q)_n (r^{-1}; q)_m A_n(a, b)A_m(b, a) s^n r^m}{(abcds; q)_n (abcdr; q)_m} \frac{1}{2\pi}\int_{-\pi}^{\pi} K(\theta)H_n(e^{i\theta})H_m(e^{i\theta})d\theta\\
&=A_0 \frac{(abcd, abcdrs; q)_\infty}{(abcds, abcdr; q)_\infty}.
\end{align*}
Now we will use another method to expand the right-hand side member of the above equation in terms of 
\[
\frac{(s^{-1}; q)_n (r^{-1}; q)_m  s^n r^m}{(abcds; q)_n (abcdr; q)_m}.
\]  

In fact, by taking $\alpha=abcd/q$ in Proposition~\ref{qdougallpp}, we immediately find that
\begin{align*}
&\sum_{n=0}^\infty \frac{(1-abcd q^{2n-1})(abcdq^{-1}, s^{-1}, r^{-1}; q)_n (-abcd rs)^n q^{n(n-1)/2}}
{(1-abcdq^{-1})(q, abcds, abcdr; q)_n}\\
&=\frac{(abcd, abcdrs; q)_\infty}{(abcds, abcdr; q)_\infty}.
\end{align*}
Combining the above two equations, we are led to the series identity
\begin{align*}
&\sum_{n, m=0}^\infty \frac{(s^{-1}; q)_n (r^{-1}; q)_m A_n(a, b)A_m(b, a) s^n r^m}{(abcds; q)_n (abcdr; q)_m} \frac{1}{2\pi}\int_{-\pi}^{\pi} K(\theta)H_n(e^{i\theta})H_m(e^{i\theta})d\theta\\
&=A_0\sum_{n=0}^\infty \frac{(1-abcd q^{2n-1})(abcdq^{-1}, s^{-1}, r^{-1}; q)_n (-abcd rs)^n q^{n(n-1)/2}}
{(1-abcdq^{-1})(q, abcds, abcdr; q)_n}.
\end{align*}
Using Theorem~\ref{liudoublethm},  we can in the above equation equate the coefficients of
\[
\frac{(s^{-1}; q)_n (r^{-1}; q)_m  s^n r^m}{(abcds; q)_n (abcdr; q)_m}
\]
to arrive at the integral formula in Theorem~\ref{qhahnorthm}. This completes the
proof of the theorem.
\end{proof}
\section{ the orthogonality relation for the Big  $q$-Jacobi polynomials}
We begin this section with the following transformation formula for $q$-series.
\begin{prop}\label{BWWtrans}
For $\lambda=q\alpha^2/bcd$ and $|q\alpha/cd|<1,$ we have
\begin{align*}
&{_3\phi_2}\({{c, d, \alpha q/ab}\atop{\alpha q/a, \alpha q/b}}; q, \frac{q\alpha}{cd}\)
=\frac{(q\alpha/c, q\alpha/d, q\lambda/a; q)_\infty}
{(\alpha q/a, q\alpha/cd, q\lambda; q)_\infty}\\
&\qquad\times\sum_{n=0}^\infty \frac{(1-\lambda q^{2n})(\lambda, a, \lambda b/\alpha, \lambda c/\alpha, \lambda d/\alpha, q )_n}
{(1-\lambda)(q, q\lambda/a, q\alpha/b, q\alpha/c, q\alpha/d; q)_n}\(-\frac{q\alpha }{a}\)^n q^{n(n-1)/2}.
\end{align*}
\end{prop}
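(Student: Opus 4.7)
The plan is to recognize Proposition~\ref{BWWtrans} as a Bailey-type transformation connecting a non-terminating ${_3\phi_2}$ to a very-well-poised (VWP) series with base $\lambda = q\alpha^2/(bcd)$ carrying four free parameters. A sanity check that I would do first: the numerator parameters $a,\lambda b/\alpha,\lambda c/\alpha,\lambda d/\alpha$ of the sum on the right pair respectively with the denominator parameters $q\lambda/a,q\alpha/b,q\alpha/c,q\alpha/d$ in the VWP sense (each product equals $q\lambda$), and the factor $(-q\alpha/a)^n q^{n(n-1)/2}$ is exactly what arises when a fifth free parameter of a ${_8W_7}(\lambda;\cdot)$ is sent to infinity. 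This identifies the right-hand side as a limit of a VWP ${_8W_7}(\lambda)$, so the whole identity is morally a non-terminating ${_8W_7}$ transformation in disguise.

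My preferred approach is to derive the proposition from Theorem~\ref{liunewthmb} with $m=2$. There the LHS is a six-parameter infinite product and the RHS is a sum over $n$ of the factor $(1-\alpha q^{2n})(\alpha,q/a;q)_n (a/q)^n/[(1-\alpha)(q,\alpha a;q)_n]$ times an inner ${_4\phi_3}(q^{-n},\alpha q^n,\alpha c_1,\alpha c_2;\alpha b,\alpha b_1,\alpha b_2;q,q)$. The strategy is to specialize $(\alpha,a,b,b_1,c_1,b_2,c_2)$ so that: (i) the infinite-product prefactor becomes the prefactor $(q\alpha/c,q\alpha/d,q\lambda/a;q)_\infty/(\alpha q/a,q\alpha/cd,q\lambda;q)_\infty$ in Proposition~\ref{BWWtrans}; (ii) after interchanging the summation indices $n$ and $k$ inside the ${_4\phi_3}$, the $k$-sum collapses into the ${_3\phi_2}$ on the LHS of the proposition. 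The natural choice is to set the $\alpha$ of Theorem~\ref{liunewthmb} equal to $\lambda$, so that the outer sum inherits the VWP base, and then solve for the remaining parameters; the relation $\lambda=q\alpha^2/(bcd)$ should emerge as the consistency condition that makes both matchings compatible.

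A second, more classical route would be to invoke Bailey's non-terminating ${_8W_7}$ transformation (e.g.\ Gasper--Rahman (III.36)) in the form expressing ${_8W_7}$ as a sum of two ${_4\phi_3}$'s, and then let one of the five free parameters tend to infinity so that exactly one of the two ${_4\phi_3}$'s survives in the limit and identifies with the proposition's ${_3\phi_2}$, while the surviving infinite product collapses to the prefactor on the right of Proposition~\ref{BWWtrans}. The limiting behavior $(f;q)_n/(\alpha q/f;q)_n \sim (-f)^n q^{n(n-1)/2}$ precisely supplies the unusual $q^{n(n-1)/2}$ factor in the stated sum.

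The principal obstacle in either approach is the parameter bookkeeping: matching the infinite-product prefactor and the inner sum simultaneously, while respecting the balance condition $\lambda = q\alpha^2/(bcd)$. Concretely, in the Theorem~\ref{liunewthmb} route one must find a specialization of $b_1,c_1,b_2,c_2$ for which the inner ${_4\phi_3}$, after the $n$--$k$ swap, is a Saalsch\"utzian ${_3\phi_2}$ in the variable $k$ whose parameters coincide with $c,d,\alpha q/ab,\alpha q/a,\alpha q/b$; in the Bailey route one must verify that the surviving ${_4\phi_3}$ (after the limit) reduces to the same ${_3\phi_2}$. Once the parameter identification is fixed, the remainder of the argument is a routine bookkeeping of $q$-Pochhammer symbols.
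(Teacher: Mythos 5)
Your sanity check is right: the sum on the right of Proposition~\ref{BWWtrans} is the limit of an ${_8W_7}(\lambda; a,\lambda b/\alpha,\lambda c/\alpha,\lambda d/\alpha, e;q,q\alpha/ae)$ as $e\to\infty$. But neither of your two routes closes, and the paper uses neither: its proof combines Watson's terminating transformation (a terminating ${_8W_7}(\alpha;a,b,c,d,q^{-n})$ equals a multiple of a Saalsch\"utzian ${_4\phi_3}$) with Bailey's terminating base-change \cite[Eq.\ (2.10.3)]{Gas+Rah}, which relates ${_8W_7}(\alpha;\ldots,q^{-n})$ to ${_8W_7}(\lambda;\ldots,q^{-n})$ with $\lambda=q\alpha^2/bcd$; it then solves for the ${_4\phi_3}$ and only at the very end lets $n\to\infty$, which is what turns the Saalsch\"utzian ${_4\phi_3}$ into the ${_3\phi_2}$ with argument $q\alpha/cd$ via $(q^{-n};q)_k/(cdq^{-n}/\alpha;q)_k\to(\alpha/cd)^k$.

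The gap in your first route is structural: Theorem~\ref{liunewthmb} equates an \emph{infinite product} to a double sum, whereas Proposition~\ref{BWWtrans} equates two non-terminating single sums, neither of which is an infinite product for generic parameters. For your step (i) to hold, the left side of Theorem~\ref{liunewthmb} would have to equal the prefactor times the ${_3\phi_2}$, i.e.\ the ${_3\phi_2}$ itself would have to reduce to a ratio of infinite products; no choice of $b_1,c_1,b_2,c_2$ achieves this, and interchanging $n$ and $k$ in the double sum cannot manufacture the change of base from $\alpha$ to $\lambda$ that the right-hand side demands. This is exactly the ``parameter bookkeeping'' you defer, and it cannot be completed. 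Your second route is closer in spirit but is missing two ingredients. First, the two-term relation (III.36) of Gasper--Rahman keeps the same base $a$ in the ${_8W_7}$; the passage to base $\lambda=q\alpha^2/bcd$ --- which is where the hypothesis of the proposition actually enters --- must come from a separate transformation such as (III.23) or (2.10.3), which you never invoke. Second, in the non-terminating decomposition into two ${_4\phi_3}$'s, sending one free parameter to infinity does \emph{not} annihilate the second term: its coefficient contains factors of the type $(d;q)_\infty/(def/aq;q)_\infty$ whose limit does not vanish (and does not even converge termwise). The second term genuinely disappears only in the terminating case, when a parameter equals $q^{-n}$ and $(q^{-n};q)_\infty=0$ --- which is precisely why the paper works with terminating identities throughout and postpones the limit $n\to\infty$ to the last step.
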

\begin{proof} Recall Watson's $q$-analogue of Whipple's theorem (see, for example \cite[p. 43, Eq. (2.5.1)]{Gas+Rah},
\cite[Proposition~10.3]{LiuRamanujanP})
\begin{align*}
&{_8\phi_7}\({{\alpha, q\alpha^{1/2}, -q\alpha^{1/2}, a, b, c, d, q^{-n}}
\atop{\alpha^{1/2}, -\alpha^{1/2}, q\alpha/a, q\alpha/b, q\alpha/c, q\alpha/d, \alpha q^{n+1} }}; q, \frac{\alpha^2 q^{2+n}}{abcd}\)\\
&=\frac{(q\alpha, q\alpha/cd; q)_n}{(q\alpha/c, q\alpha/d; q)_n}
{_4\phi_3}\({{q^{-n}, c, d, q\alpha/ab}\atop{q\alpha/a, q\alpha/b, cdq^{-n}/\alpha}}; q, q\).
\end{align*}
Setting $\lambda=q\alpha^2/bcd$ and using the transformation formula \cite[p. 49, Eq.(2.10.3)]{Gas+Rah}, we have that
\begin{align*}
&{_8\phi_7}\({{\alpha, q\alpha^{1/2}, -q\alpha^{1/2}, a, b, c, d, q^{-n}}
\atop{\alpha^{1/2}, -\alpha^{1/2}, q\alpha/a, q\alpha/b, q\alpha/c, q\alpha/d, \alpha q^{n+1} }}; q, \frac{\alpha^2 q^{2+n}}{abcd}\)\\
&=\frac{(q\alpha, q\lambda/a; q)_n}{(q\alpha/a, q\lambda; q)_n}
{_8\phi_7}\({{\lambda, q\lambda^{1/2}, -q\lambda^{1/2}, \lambda b/\alpha, \lambda c/\alpha, \lambda d/\alpha, a, q^{-n}}
\atop{\lambda^{1/2}, -\lambda^{1/2}, q\lambda/a, q\alpha/b, q\alpha/c, q\alpha/d, \lambda q^{n+1} }}; q, \frac{\alpha q^{1+n}}{a}\).
\end{align*}
Combining these two equations, we are led to the $q$-transformation formula
\begin{align*}
&{_4\phi_3}\({{q^{-n}, c, d, q\alpha/ab}\atop{q\alpha/a, q\alpha/b, cdq^{-n}/\alpha}}; q, q\)
=\frac{(q\alpha/c, q\alpha/d, q\lambda/a; q)_n}{(\alpha q/a, q\alpha/cd, q\lambda; q)_n}\\
&\qquad \times
{_8\phi_7}\({{\lambda, q\lambda^{1/2}, -q\lambda^{1/2}, \lambda b/\alpha, \lambda c/\alpha, \lambda d/\alpha, a, q^{-n}}
\atop{\lambda^{1/2}, -\lambda^{1/2}, q\lambda/a, q\alpha/b, q\alpha/c, q\alpha/d, \lambda q^{n+1} }}; q, \frac{\alpha q^{1+n}}{a}\).
\end{align*}
Letting $n\to \infty$ in the both sides of the above equation, we complete the proof of
Proposition~\ref{BWWtrans}.
\end{proof}
\begin{prop}\label{LBWW}If there are no zero factors in the denominator of the integral
and $\lambda=rhuv/q$, then,
we have
\begin{align*}
&\int_{u}^v \frac{(qx/u, qx/v, hx; q)_\infty}{(rx, sx, tx; q)_\infty} d_q x
=\frac{(1-q)v(q, u/v, qv/u, hu, hv, rsuv, rtuv; q)_\infty}
{(rhuv, ru, rv, su, sv, tu, tv; q)_\infty}\\
&\qquad \times
\sum_{n=0}^\infty \frac{(1-\lambda q^{2n})(\lambda, ru, rv, h/s, h/t; q)_n}
{(1-\lambda)(q, hu, hv, rsuv, rtuv; q)_n} (-stuv)^n q^{n(n-1)/2}.
\end{align*}
\end{prop}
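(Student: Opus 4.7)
The plan is to evaluate the Jackson $q$-integral directly as a series, apply Proposition~\ref{BWWtrans} to the very-well-poised sum on the right-hand side, and reconcile the two through a classical three-term ${}_3\phi_2$ identity.

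First, I would use the definition $\int_u^v g(x)\,d_q x = (1-q)\sum_{n=0}^{\infty}[v\,g(vq^n) - u\,g(uq^n)]q^n$ applied to the integrand $g(x) = (qx/u, qx/v, hx; q)_\infty/(rx, sx, tx; q)_\infty$. Simplifying each factor via $(Aq^n;q)_\infty = (A;q)_\infty/(A;q)_n$, each endpoint contribution collapses to an infinite-product prefactor times a non-terminating ${}_3\phi_2$ with argument $q$. The integral thereby equals
\[
(1-q)v\,\frac{(q, qv/u, hv; q)_\infty}{(rv, sv, tv; q)_\infty}\,{}_3\phi_2\!\left({{rv, sv, tv}\atop{qv/u, hv}}; q, q\right) - (1-q)u\,\frac{(q, qu/v, hu; q)_\infty}{(ru, su, tu; q)_\infty}\,{}_3\phi_2\!\left({{ru, su, tu}\atop{qu/v, hu}}; q, q\right).
\]

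Second, I would apply Proposition~\ref{BWWtrans} to the series on the right-hand side of Proposition~\ref{LBWW}. Setting $\lambda = rhuv/q$ and taking the distinguished Bailey--Watson parameter to be $a = rv$ forces, through the pairings $\beta\cdot(q\lambda/\beta) = q\lambda = rhuv$, the identifications $\alpha = rstuv^2/q$, $b = rstuv/h$, $c = tv$, $d = sv$; with these values the numerator and denominator of the summand match exactly $(\lambda, ru, rv, h/s, h/t; q)_n$ over $(hu, hv, rsuv, rtuv; q)_n$, while $q\alpha/a = stuv$ supplies the exponential factor. Proposition~\ref{BWWtrans} then rewrites the series as $\frac{(stuv, ru, rhuv; q)_\infty}{(rsuv, rtuv, hu; q)_\infty}\,{}_3\phi_2\!\left({{sv, tv, h/r}\atop{stuv, hv}}; q, ru\right)$.

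Third, matching Steps 1 and 2 requires a non-terminating three-term transformation for ${}_3\phi_2$ series --- in effect Bailey's transformation, itself a consequence of the Watson ${}_8\phi_7$ identity already invoked in the proof of Proposition~\ref{BWWtrans}. This identity is precisely what converts the difference of two ${}_3\phi_2$'s with argument $q$ from Step 1 into the single ${}_3\phi_2$ with argument $ru$ from Step 2, producing upon simplification the displayed prefactor $(1-q)v(q, u/v, qv/u, hu, hv, rsuv, rtuv; q)_\infty/(rhuv, ru, rv, su, sv, tu, tv; q)_\infty$.

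The principal obstacle is this final reconciliation: carefully tracking the many infinite Pochhammer factors through the two representations and verifying that they combine exactly as stated. The algebra is mechanical but voluminous, and the density of $q$-symbols is where most of the computational effort lies; conceptually, though, the structure (Jackson expansion, then very-well-poised transformation, then three-term identity) is entirely routine.
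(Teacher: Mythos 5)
Your proposal is correct, and its decisive step --- substituting $(\alpha, a, b, c, d)\mapsto(rstuv^2/q,\ rv,\ rstuv/h,\ sv,\ tv)$ into Proposition~\ref{BWWtrans}, which gives $q\lambda=rhuv$ and converts a ${}_3\phi_2$ with argument $ru$ into the stated very-well-poised series --- is exactly the paper's. The two arguments part company only in how the intermediate evaluation
\[
\int_u^v \frac{(qx/u, qx/v, hx; q)_\infty}{(rx, sx, tx; q)_\infty}\, d_qx
=\frac{(1-q)v(q, u/v, qv/u, hv, stuv; q)_\infty}{(rv, su, sv, tu, tv; q)_\infty}\,
{}_3\phi_2\left({{h/r, sv, tv}\atop{stuv, hv}}; q, ru\right)
\]
is obtained: the paper simply cites it as Theorem~9 of \cite{Liu2010}, while you rebuild it from the definition of the Jackson integral (your Step~1 correctly produces the difference of two non-terminating ${}_3\phi_2$'s with argument $q$) together with the classical three-term relation expressing ${}_3\phi_2(a,b,c;d,e;q,de/abc)$ through two ${}_3\phi_2(\cdot\,;q,q)$ series --- and indeed for your parameters $de/abc=stuv\cdot hv/\bigl((h/r)(sv)(tv)\bigr)=ru$, so the argument matches. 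Your route is more self-contained but imports Bailey's non-terminating three-term identity and leaves its heaviest part, the reconciliation of the infinite products, unexecuted; the paper's citation sidesteps that bookkeeping entirely. Both paths end at the same prefactor once the factor $(stuv; q)_\infty$ in the numerator of the $q$-integral evaluation cancels against the same factor in the denominator of the prefactor produced by Proposition~\ref{BWWtrans}.
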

\begin{proof}   In \cite[Theorem~9]{Liu2010}, we have proved  the $q$-integral formula
\begin{align*}
\int_{u}^v \frac{(qx/u, qx/v, hx; q)_\infty d_q x}{(rx, sx, tx; q)_\infty}
&=\frac{(1-q)v(q, u/v, qv/u, hv, stuv; q)_\infty}{(rv, su, sv, tu, tv; q)_\infty}\\
&\qquad \times {_3\phi_2}\({{h/r, sv, tv}\atop{stuv, hv}}; q, ru\).
\end{align*}
If we replace $(\alpha, a, b, c, d)$ by $(rstuv^2/q, rv, rstuv/h, sv, tv)$ in Proposition~\ref{BWWtrans},
then, we have $q\lambda=ruvh$ and
\begin{align*}
&{_3\phi_2}\({{h/r, sv, tv}\atop{stuv, hv}}; q, ru\)=\frac{(rsuv, rtuv, hu; q)_\infty}{(stuv, rhuv, ru; q)_\infty}\\
&\qquad \times
\sum_{n=0}^\infty \frac{(1-\lambda q^{2n})(\lambda, ru, rv, h/s, h/t; q)_n}
{(1-\lambda)(q, hu, hv, rsuv, rtuv; q)_n} (-stuv)^n q^{n(n-1)/2}.
\end{align*}
Combining the above two equations, we complete the proof of Proposition~\ref{LBWW}.
\end{proof}
The big $q$-Jacobi polynomials are defined as (see, for example \cite[p. 438]{KLS})
\begin{equation}
P_n(a, b, c; x)={_3\phi_2}\left({{q^{-n}, abq^{n+1}, x}\atop{qa, qc}}; q, q\right).
\label{bjpeqn1}
\end{equation}
For simplicity, in this section we use $P_n(x)$ to denote the big $q$-Jacobi polynomials.
Using Theorems \ref{liunewthmb}, we can obtain the following generating function of the
big $q$-Jacobi polynomials.
\begin{prop}\label{liuqjacobipp} For $\max\{|qa|, |qb|, |tx|, |qabt|\}<1,$ we have
\begin{equation*}
\frac{(qab, qat, qct, x; q)_\infty}
{(q^2abt, qa, qc, tx; q)_\infty}
=\sum_{n=0}^\infty  \frac{(1-abq^{2n+1}) (qab, 1/t; q)_n t^n}
{(q, q^2abt; q)_n}P_n(x).
\end{equation*}
\end{prop}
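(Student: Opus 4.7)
The plan is to apply Theorem \ref{liunewthmb} in the case $m=1$, choosing the free parameters so that the inner ${}_3\phi_2$ collapses to $P_n(x)$. To avoid a clash with $a,b,c$ in $P_n(a,b,c;x)$, I would temporarily rename the parameters of Theorem \ref{liunewthmb} as $\alpha, A, B, B_1, C_1$ and set
\[
\alpha=abq,\qquad A=qt,\qquad B=1/b,\qquad B_1=c/(ab),\qquad C_1=x/(abq).
\]
Under these choices one computes $\alpha q^n=abq^{n+1}$, $\alpha C_1=x$, $\alpha B=qa$, and $\alpha B_1=qc$, so the inner basic hypergeometric series of Theorem \ref{liunewthmb} becomes
\[
{_3\phi_2}\!\left({{q^{-n},\,abq^{n+1},\,x}\atop{qa,\,qc}};\,q,\,q\right)=P_n(x),
\]
which matches the definition \reff{bjpeqn1}.

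Next I would simplify the remaining ingredients. With $\alpha=abq$ and $A=qt$ the scalar weight in Theorem \ref{liunewthmb} reduces to
\[
\frac{(1-abq^{2n+1})(abq,\,1/t;\,q)_n\,t^n}{(1-abq)(q,\,q^2abt;\,q)_n},
\]
while the eight infinite $q$-shifted factorials on the left collapse to
\[
\frac{(abq^2,\,aqt,\,qct,\,x;\,q)_\infty}{(q^2abt,\,qa,\,tx,\,qc;\,q)_\infty}.
\]
Multiplying the resulting identity through by the constant $1-abq$ then absorbs the $1/(1-\alpha)$ on the right while promoting $(abq^2;q)_\infty$ to $(qab;q)_\infty$ on the left, which delivers the identity of Proposition \ref{liuqjacobipp}. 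The stated hypothesis $\max\{|qa|,|qb|,|tx|,|qabt|\}<1$ implies the convergence condition of Theorem \ref{liunewthmb} at these parameters (noting $|q^2abt|<|qabt|$ since $|q|<1$).

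The proof is otherwise routine; I do not expect any conceptual obstacle. The only care required is in the bookkeeping, since with five free parameters and eight infinite products on the left one has to verify eight separate identifications. These are pinned down automatically once one demands that the upper entries $\{q^{-n},\alpha q^n,\alpha C_1\}$ of the inner ${}_3\phi_2$ equal $\{q^{-n},abq^{n+1},x\}$ and the lower entries $\{\alpha B,\alpha B_1\}$ equal $\{qa,qc\}$, and the remaining substitution $A=qt$ is forced by the desired $t^n$ in the summand.
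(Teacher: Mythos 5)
Your proof is correct: I have checked all eight infinite products and the summand under your substitutions $\alpha=abq$, $A=qt$, $B=1/b$, $B_1=c/(ab)$, $C_1=x/(abq)$, and after multiplying through by $1-abq$ one lands exactly on the stated identity. However, your route is not the one the paper takes. The paper does not use the $m=1$ case of Theorem~\ref{liunewthmb} at all: it starts from the $m=2$ case with $b=0$ and the conjugate pair $c_1=\lambda\alpha^{-1}e^{i\theta}$, $c_2=\lambda\alpha^{-1}e^{-i\theta}$, then sets $\cos\theta=x/(2\lambda)$ and lets $\lambda\to0$ so that the two factors $(\alpha c_1,\alpha c_2;q)_\infty$ coalesce into $(x;q)_\infty$ and the inner ${}_4\phi_3$ (which has a zero lower parameter) degenerates to the ${}_3\phi_2$ defining $P_n(x)$; only then does it substitute $a\mapsto qt$, $\alpha b_1=qa$, $\alpha b_2=qc$, $\alpha=qab$. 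Your direct $m=1$ specialization achieves the same end without any limiting procedure and is, if anything, cleaner; the paper's detour appears to be chosen to parallel its derivation of the Askey--Wilson generating function (Proposition~\ref{awgenpp}), where the conjugate-pair device is genuinely needed. One small caveat on your last remark: the convergence condition of Theorem~\ref{liunewthmb} at your parameters is $\max\{|q^2abt|,|qa|,|qc|,|tx|\}<1$, and the proposition's stated hypothesis controls $|qb|$ rather than $|qc|$, so it does not literally imply what you need; this discrepancy is inherited from the proposition's own hypothesis (apparently a slip in the paper, or an appeal to analytic continuation) and affects the paper's proof equally, so it is not a defect specific to your argument.
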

\begin{proof}
Setting  $c_1=\lambda \alpha^{-1} e^{i\theta},  c_2=\lambda \alpha^{-1} e^{-i\theta} $
and $b=0$ in Theorems \ref{liunewthmb}, we deduce that
\begin{align*}
&\frac{(\alpha q, \alpha ab_1/q, \alpha a b_2/q; q)_\infty}
{(\alpha a, \alpha b_1, \alpha b_2; q)_\infty}
\prod_{n=0}^\infty \frac{(1-2\lambda q^n \cos \theta+\lambda ^2 q^{2n})}
{(1-2\lambda a q^{n-1}\cos \theta+ \lambda^2 a^2 q^{2n-2})}\\
&=\sum_{n=0}^\infty  \frac{(1-\alpha q^{2n}) (\alpha, q/a; q)_n (a/q)^n}
{(1-\alpha)(q, \alpha a; q)_n}
{_{4}\phi_{3}}\left( {{q^{-n}, \alpha q^n, \lambda e^{i\theta},  \lambda e^{-i\theta}}
\atop{0, \alpha b_1, \alpha b_2}}; q, q\right).
\end{align*}
Taking $\cos \theta=x/(2 \lambda)$ in the above equation, letting $\lambda \to 0,$
and replacing  $a$ by $qt$, we find that
\begin{align*}
&\frac{(\alpha q, \alpha t b_1, \alpha t b_2, x; q)_\infty}
{(q\alpha t, \alpha b_1, \alpha b_2, tx; q)_\infty}\\
&=\sum_{n=0}^\infty  \frac{(1-\alpha q^{2n}) (\alpha, 1/t; q)_n t^n}
{(1-\alpha)(q, q\alpha t; q)_n}
{_{3}\phi_{2}}\left( {{q^{-n}, \alpha q^n, x}
\atop{\alpha b_1, \alpha b_2}}; q, q\right).
\end{align*}
Replacing $ \alpha b_1=qa, \alpha b_2=qc$ and $\alpha=qab$ in the above equation,
we complete the proof of Proposition~\ref{liuqjacobipp}.
\end{proof}
If the $q$-integral of the function $f(x)$ from $a$ to $b$ is defined as
\[
\int_{a}^b f(x) d_q x=(1-q)\sum_{n=0}^\infty [b f(bq^n)-af(aq^n)]q^n,
\]
then, the orthogonality relation for the big $q$-Jacobi polynomials can be stated in the following theorem 
(see, for example \cite[p. 182]{Gas+Rah}, \cite[p. 438]{KLS}).
\begin{thm}\label{qjacobiorth}  The orthogonality relation for the big $q$-Jacobi polynomials 
is 
\begin{align*}
&\int_{cq}^{aq}\frac{(x/a, x/c; q)_\infty}{(x, bx/c ; q)_\infty}P_m(x)P_n(x) d_q x
=aq(1-q)\frac{(q, abq^2, a/c, qc/a; q)_\infty}{(aq, bq, cq, abq/c; q)_\infty}\\
&\qquad \times \frac{(1-abq)(q, qb, abq/c; q)_n}{(1-abq^{2n+1})(aq, abq, cq; q)_n}
(-acq^2)^n q^{n(n-1)/2}\delta_{mn}.
\end{align*}
\end{thm}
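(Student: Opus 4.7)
The plan is to mirror the structure of the $q$-Hahn argument in Section~3, with Proposition~\ref{liuqjacobipp} playing the role of Proposition~\ref{hahngenpp} and the $q$-integral evaluation in Proposition~\ref{LBWW} playing the role of the Askey-Roy integral. The first step is to apply Proposition~\ref{liuqjacobipp} twice, once with auxiliary parameter $s$ and once with $r$, giving
\[
\sum_{n=0}^\infty B_n(s) P_n(x) = \frac{(qab,\, qas,\, qcs,\, x; q)_\infty}{(q^2abs,\, qa,\, qc,\, sx; q)_\infty}, \qquad B_n(s) = \frac{(1-abq^{2n+1})(qab,\, 1/s; q)_n\, s^n}{(q,\, q^2abs; q)_n},
\]
and the analogous identity with $s$ replaced by $r$. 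Unlike the $q$-Hahn situation, where the symmetry $H_n(a,b,c,d;z) = H_n(b,a,c,d;z)$ had to be invoked to produce two distinct generating functions, the right-hand side of Proposition~\ref{liuqjacobipp} is already symmetric in $a$ and $c$, so no parameter swap is required and multiplying the two identities directly produces a double series $\sum_{n,m} B_n(s) B_m(r) P_n(x) P_m(x)$.

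Next, I would multiply through by the big $q$-Jacobi weight $w(x) = (x/a,\, x/c; q)_\infty/(x,\, bx/c; q)_\infty$ and $q$-integrate from $cq$ to $aq$. The left-hand side becomes $\sum_{n,m} B_n(s) B_m(r)\, \mathcal{I}_{n,m}$, where $\mathcal{I}_{n,m}$ denotes the integral in Theorem~\ref{qjacobiorth}. On the right, one copy of $(x; q)_\infty$ from the generating-function product cancels against the factor $(x; q)_\infty$ in the denominator of $w(x)$, leaving
\[
\int_{cq}^{aq} \frac{(x/a,\, x/c,\, x; q)_\infty}{((b/c)x,\, sx,\, rx; q)_\infty} \, d_q x,
\]
which is precisely the shape covered by Proposition~\ref{LBWW}. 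Applying that proposition with $u = cq$, $v = aq$, $h = 1$, and the parameters $(r,s,t)$ of the proposition set to $(b/c,\, s,\, r)$ produces $\lambda = abq$ and evaluates the $q$-integral as a \emph{single} series in which $s$ and $r$ occur only through the balanced combination $(1/s,\, 1/r; q)_n (sr)^n / (q^2abs,\, q^2abr; q)_n$; every remaining $s$- and $r$-dependent prefactor (such as $(qas; q)_\infty/(saq; q)_\infty$) trivially cancels.

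At this stage both sides are analytic in $(s,r)$ near $(0,0)$ and are presented, up to a constant independent of $s$ and $r$, as expansions in the two-variable basis
\[
\frac{(1/s; q)_n\, s^n\, (1/r; q)_m\, r^m}{(q^2abs; q)_n\, (q^2abr; q)_m}, \qquad n,m \ge 0.
\]
Invoking Theorem~\ref{liudoublethm} to equate coefficients, the fact that the right-hand side contributes only diagonal terms ($n=m$) at once forces $\mathcal{I}_{n,m} = 0$ whenever $n \ne m$, which is the orthogonality, while the diagonal coefficients yield a closed form for $\mathcal{I}_{n,n}$. The overall constant prefactor is fixed by the $n = 0$ specialization, which is the classical big $q$-Jacobi $q$-beta integral, and after collecting everything one obtains precisely the value stated in Theorem~\ref{qjacobiorth}.

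The main obstacle I expect is the bookkeeping in the second step: verifying that, once Proposition~\ref{LBWW} has been applied, each $s$- and $r$-dependent prefactor really does cancel against the corresponding factor coming from the product of the two generating functions, so that the right-hand side presents itself as a single diagonal series of the required form. A subsidiary issue is justifying the interchange of $\sum_{n,m}$ with $\int_{cq}^{aq}$, which one can handle for $|s|, |r|$ sufficiently small by absolute convergence of the two generating-function series on the $q$-lattice and then extend the resulting identity to generic $(s,r)$ by analytic continuation.
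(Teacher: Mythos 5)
Your proposal is correct and follows essentially the same route as the paper: two copies of the generating function in Proposition~\ref{liuqjacobipp} with auxiliary parameters $s$ and $t$ (your $s$ and $r$), multiplication by the weight, evaluation of the resulting $q$-integral via Proposition~\ref{LBWW} with $(h,r,u,v)=(1,b/c,qc,qa)$ so that $\lambda=abq$, and extraction of coefficients by Theorem~\ref{liudoublethm}. The only small inaccuracy is your stated reason for not needing a parameter swap — the relevant point is that the product of the two generating functions contributes $(x;q)_\infty^2$ against a single $(x;q)_\infty$ in the weight's denominator, leaving exactly the factor $(hx;q)_\infty$ with $h=1$ required by Proposition~\ref{LBWW}, rather than any symmetry of the right-hand side in $a$ and $c$ (which in fact fails because of the $(qab,q^2abt;q)_\infty$ factors); this does not affect the argument.
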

\begin{proof} Replacing $t$ by $s$ in Proposition~\ref{liuqjacobipp}, we immediately deduce that
\begin{equation*}
\frac{(qab, qas, qcs, x; q)_\infty}
{(q^2abs, qa, qc, sx; q)_\infty}
=\sum_{m=0}^\infty  \frac{(1-abq^{2m+1}) (qab, 1/s; q)_m s^m}
{(q, q^2abs; q)_m}P_m(x).
\end{equation*}
If we multiply this equation with the equation in Proposition~\ref{liuqjacobipp} together,  we obtain
\begin{align*}
&\sum_{m, n=0}^\infty \frac{(1-abq^{2m+1})(1-abq^{2n+1})(qab; q)_m (qab; q)_n (1/s; q)_m (1/t; q)_ns^m t^n}
{(q; q)_m (q; q)_n (q^2abs; q)_m (q^2abt; q)_n}\\
&\qquad \times P_m(x)P_n(x)\\
&=\frac{(qab, x; q)_\infty ^2 (qas, qcs, qat, qct; q)_\infty}
{(qa, qc; q)^2_\infty (q^2abs, q^2abt, sx, tx; q)_\infty}.
\end{align*}
Multiplying the above equation by $(x/a, x/c; q)_\infty/(x, bx/c; q)_\infty$ and then taking the $q$-integral 
over $[cq, aq]$,  we find that
\begin{align*}
&\sum_{m, n=0}^\infty \frac{(1-abq^{2m+1})(1-abq^{2n+1})(qab; q)_m (qab; q)_n (1/s; q)_m (1/t; q)_ns^m t^n}
{(q; q)_m (q; q)_n (q^2abs; q)_m (q^2abt; q)_n}\\
&\qquad \times    \int_{cq}^{aq}\frac{(x/a, x/c; q)_\infty}{(x, bx/c ; q)_\infty}P_m(x)P_n(x) d_q x  \\
&=\frac{(qab; q)_\infty ^2 (qas, qcs, qat, qct; q)_\infty}
{(qa, qc; q)^2_\infty (q^2abs, q^2abt; q)_\infty}
\int_{cq}^{aq} \frac{(x/a, x/c, x; q)_\infty d_q x}{(bx/c, sx, tx; q)_\infty}.
\end{align*}
Setting $(h, r, u, v)=(1, b/c, qc, qa)$ in Proposition~\ref{LBWW} and simplifying, we obtain
\begin{align*}
&\int_{cq}^{aq} \frac{(x/a, x/c, x; q)_\infty d_q x}{(bx/c, sx, tx; q)_\infty}
=\frac{aq(1-q)(q, c/a, qa/c, qa, qc, absq^2, abtq^2; q)_\infty}
{(abq^2, qb, abq/c, aqs, aqt, cqs, cqt; q)_\infty}\\
&\qquad 
\sum_{n=0}^\infty \frac{(1-abq^{2n+1})(abq, bq, abq/c, 1/s, 1/t; q)_n}
{(1-abq)(q, aq, cq, absq^2, abtq^2; q)_n} (-acst q^2)^n q^{n(n-1)/2}.
\end{align*}
Combining the above equations, we deduce that
\begin{align*}
&\sum_{m, n=0}^\infty \frac{(1-abq^{2m+1})(1-abq^{2n+1})(qab; q)_m (qab; q)_n (1/s; q)_m (1/t; q)_ns^m t^n}
{(q; q)_m (q; q)_n (q^2abs; q)_m (q^2abt; q)_n}\\
&\qquad \times    \int_{cq}^{aq}\frac{(x/a, x/c; q)_\infty}{(x, bx/c ; q)_\infty}P_m(x)P_n(x) d_q x  \\
&=\frac{aq(1-q)(q, qab, c/a, qa/c; q)_\infty}
{(qa, qb, qc, qab/c; q)_\infty }\\
&\qquad \times \sum_{n=0}^\infty \frac{(1-abq^{2n+1})(abq, bq, abq/c, 1/s, 1/t; q)_n}
{(q, aq, cq, absq^2, abtq^2; q)_n} (-acst q^2)^n q^{n(n-1)/2}.
\end{align*}
Using Theorem~\ref{liudoublethm}, we can obtain the orthogonality relation for the big $q$-Jacobi polynomials 
 by  equating the coefficients of 
\[
\frac{(1/s; q)_m (1/t; q)_ns^m t^n}{(q^2abs; q)_m (q^2abt; q)_n}
\]
in the above equation. This completes the proof of Theorem~\ref{qjacobiorth}.
\end{proof}
\section{on the Nassrallah-Rahman integral}
Analogous to the hypergeometric case, we call the $q$-hypergeometric series
\[
{_{r+1}\phi_r}\({{a_1, a_2, \ldots, a_{r+1}}\atop{b_1, \ldots, b_r}}; q, z\)
\]
well-poised if the parameters satisfy the relations
$
qa_1=a_2b_1=a_3b_2=\cdots=a_{r+1}b_r;
$
very-well-poised if, in addition, $a_2=q \sqrt{a_1}, a_3=-q \sqrt{a_1}.$

For simplicity, we sometimes use $_{r+1}W_r (a_1; a_4, a_5, \ldots, a_{r+1}; q, z)$ to denote
 \[
{_{r+1}\phi_r}\({{a_1, q\sqrt{a_1}, -q\sqrt{a_1}, a_4, \ldots, a_{r+1}}\atop{\sqrt{a_1}, -\sqrt{a_1}, qa_1/a_4, \ldots, qa_1/a_{r+1}}}; q, z\)
\]

\begin{defn}\label{defn1}
For$\ x=\cos \T$, we define the notation $h(x; a)$ and
 $ h(x; a_1, a_2, \ldots, a_m)$  as follows
 \begin{align*}
&h(x; a)=(ae^{i\T}, ae^{-i\T}; q)_\infty=\prod_{k=0}^\infty (1-2q^k ax+q^{2k}a^2) \\
&h(x; a_1, a_2, \ldots, a_m)=h(x; a_1)h(x; a_2)\cdots h(x; a_m).
 \end{align*}
\end{defn}
The following important integral evaluation is due to Askey and Wilson,
which can be used to obtain an elegant proof of the orthogonality relation for these Askey-Wilson polynomials.
\begin{thm}\label{thmasw} {\rm(Askey--Wilson)} With $h(x; a)$ being defined in Definition \ref{defn1} and
$\max\{|a|, |b|, |c|, |d|\}<1$, we have
\begin{equation}
I(a, b, c, d):= \int_{0}^{\pi} \frac{h(\cos 2\T; 1)d\T}{h(\cos \T; a, b, c,  d)}
=\frac{2\pi (abcd; q)_\infty}{(q, ab, ac, ad, bc, bd, cd;q)_\infty}. \label{asweqn1}
\end{equation}
\end{thm}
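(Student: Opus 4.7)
The plan is to mirror the template used in Section~3 for the $q$-Hahn orthogonality proof: combine a generating function produced by Theorem~\ref{liunewthmb} with the Askey--Roy integral (Proposition~\ref{AskeyRoypp}), collapse the resulting bilinear sum by Rogers' $_6\phi_5$ summation (Theorem~\ref{rogersthm}), and then extract the value of the integral by the uniqueness of double $q$-expansions (Theorem~\ref{liudoublethm}).

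First I would replace $a$ by $as$ and $c$ by $cr$ in the Askey--Roy identity, producing a two-parameter deformation whose right-hand side remains a closed infinite product. Then, taking the $m=1$ specialisation of Theorem~\ref{liunewthmb} (exactly as in the derivation of Proposition~\ref{hahngenpp}), applied once in each of the ``positive'' and ``negative'' frequency variables, I would obtain two series expansions whose typical term takes the shape
\[
\sum_n \frac{(1-\alpha q^{2n})(\alpha, s^{-1};q)_n s^n}{(1-\alpha)(q, q\alpha s; q)_n}\, {_3\phi_2}\!\(\ldots ;q,q\),
\]
one copy in $s$ and another in $r$, arranged so that the coefficient of $s^n r^m$ on the integral side, after term-by-term integration against the Askey--Roy kernel, reproduces the Askey--Wilson integral $I(a,b,c,d)$ multiplied by a polynomial factor in the parameters.

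The deformed Askey--Roy right-hand side is then independently expanded in the same $(s,r)$-basis; the auxiliary single-variable sums collapse by Rogers' $_6\phi_5$ (Theorem~\ref{rogersthm}) to give a compact closed form. Equating coefficients in the two double $q$-expansions via Theorem~\ref{liudoublethm} isolates $I(a,b,c,d)$ and yields the advertised formula $2\pi(abcd;q)_\infty/(q,ab,ac,ad,bc,bd,cd;q)_\infty$. The main obstacle is the bookkeeping in step two: one must choose the auxiliary parameters in Theorem~\ref{liunewthmb} so that the generating function precisely interpolates between the Askey--Roy and Askey--Wilson integrands, and so that the theta-like factors $(\rho,q/\rho,c\rho/d,qd/c\rho;q)_\infty$ appearing on the Askey--Roy right-hand side cancel cleanly in the final answer. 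This parameter-matching step, rather than any deep analytical obstruction, is where the creativity lies; uniform convergence justifying the term-by-term integration follows from $\max\{|a|,|b|,|c|,|d|\}<1$ together with the hypotheses of Theorem~\ref{liunewthmb}.
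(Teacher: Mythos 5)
A preliminary remark on the comparison you were asked to beat: the paper offers no proof of Theorem \ref{thmasw} at all. It is quoted as a classical result of Askey and Wilson and then used as a black box in Sections 5 and 6 (for instance, inside the proof of Theorem \ref{nrintthma} and of Theorem \ref{qbaileythm}), so your proposal must stand entirely on its own merits. It does not. The fatal step is the assertion that, after term-by-term integration against the Askey--Roy kernel, the coefficient of $s^n r^m$ ``reproduces the Askey--Wilson integral $I(a,b,c,d)$ multiplied by a polynomial factor.'' The Askey--Roy integrand of Proposition \ref{AskeyRoypp} is one-sided: $a$ and $b$ occur only in $(ae^{i\theta}, be^{i\theta};q)_\infty$, $c$ and $d$ only in $(ce^{-i\theta}, de^{-i\theta};q)_\infty$, there is no factor $h(\cos 2\theta;1)=(e^{2i\theta},e^{-2i\theta};q)_\infty$, and the numerator carries the $\rho$-dependent products. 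The multipliers that Theorem \ref{liunewthmb} can supply are ratios of $q$-shifted factorials in $e^{i\theta}$ (or in $e^{-i\theta}$) that merely shift a parameter, e.g. $(ae^{i\theta};q)_\infty/(ase^{i\theta};q)_\infty$ as in Proposition \ref{hahngenpp}; no choice of the auxiliary parameters symmetrizes the kernel by creating the missing factors $(ae^{-i\theta};q)_\infty^{-1}$, $(be^{-i\theta};q)_\infty^{-1}$, $(ce^{i\theta};q)_\infty^{-1}$, $(de^{i\theta};q)_\infty^{-1}$ together with the numerator $h(\cos 2\theta;1)$. What term-by-term integration of your two expansions against $K(\theta)$ actually produces is a family of moments of the form $\frac{1}{2\pi}\int_{-\pi}^{\pi}K(\theta)H_n(e^{i\theta})H_m(e^{i\theta})\,d\theta$ --- which is precisely the computation of Section 3 and yields the $q$-Hahn orthogonality (Theorem \ref{qhahnorthm}), not the Askey--Wilson integral. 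The quantity $I(a,b,c,d)$ never appears as a coefficient, so Theorem \ref{liudoublethm} has nothing to isolate.

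The obstruction is therefore structural, not the ``bookkeeping'' you defer to step two: every argument in this paper that evaluates an integral of Askey--Wilson type (Sections 5, 6 and 7) takes Theorem \ref{thmasw} itself as the seed evaluation, and the two integrals proved independently here (Askey--Roy via the $_1\psi_1$ summation, and Al-Salam--Verma) live on kernels that cannot be deformed into the Askey--Wilson one by generating functions of the type produced by Theorem \ref{liunewthmb}. A genuine proof of Theorem \ref{thmasw} needs an independent ingredient --- e.g. the original Askey--Wilson contour/residue argument, an iteration starting from the $q$-binomial or $_6\psi_6$ evaluations, or Gasper's derivation from the Askey--Roy integral, which proceeds through a quite different mechanism than the one you sketch. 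As written, your outline asserts the conclusion at exactly the point where the work would have to be done.
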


For $x=\cos \theta,$ the Askey-Wilson polynomials $p_n(a, b, c, d; \cos \theta)$
are defined as \cite{Ask+Wil}, \cite[p. 188]{Gas+Rah}
\begin{align}
(ab, ac, ad)_n a^{-n}{_4}\phi{_3} \left({q^{-n}, abcdq^{n-1}, a
e^{i\theta}, ae^{-i\theta}\atop{ab, ac, ad}}; q, q\right).\label{askey1}
\end{align}
Using Theorem~\ref{liunewthmb}, we can obtain the following generating function for the Askey-Wilson
polynomials.
\begin{prop}\label{awgenpp} For $\max\{|abcds|, |ab|, |ac|, |ad|, |sae^{i\theta}|, |sae^{-i\theta}|\}<1,$
we have
\begin{align*}
&\frac{(abcd, abs, acs, ads, ae^{i\theta}, ae^{-i\theta}; q)_\infty}
{(abcds, ab, ac, ad, sae^{i\theta}, sae^{-i\theta}; q)_\infty}\\
&=\sum_{n=0}^\infty \frac{(1-abcdq^{2n-1})(abcdq^{-1}, s^{-1}; q)_n (sa)^n}
{(1-abcdq^{-1})(q, ab, ac, ad, abcds; q)_n}p_n(a, b, c, d; \cos \theta).
\end{align*}
\end{prop}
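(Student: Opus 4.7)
The plan is to derive this generating function as a direct specialization of Theorem~\ref{liunewthmb} with $m=2$, mirroring the argument used for Proposition~\ref{hahngenpp} (which was the $m=1$ case). The extra symmetric pair of parameters $(b_2,c_2)$ available at $m=2$ is precisely what is needed to accommodate the two factors $ae^{\pm i\theta}$ of the Askey-Wilson weight, whereas the remaining three parameters will carry the Askey-Wilson weights $ab, ac, ad$.

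Concretely, in Theorem~\ref{liunewthmb} I would take $\alpha = abcd/q$ and replace the named parameters $(a,b,b_1,c_1,b_2,c_2)$ by
\[
\left(qs,\ \frac{q}{cd},\ \frac{q}{bd},\ \frac{qe^{i\theta}}{bcd},\ \frac{q}{bc},\ \frac{qe^{-i\theta}}{bcd}\right),
\]
where $a,b,c,d,s$ now refer to the parameters of the proposition. A direct computation then gives $\alpha q=abcd$, $\alpha a=abcds$, $\alpha b=ab$, $\alpha b_j\in\{ac,ad\}$, $\alpha c_j=ae^{\pm i\theta}$, $\alpha ab_j/q\in\{abs,acs,ads\}$ and $\alpha ac_j/q=sae^{\pm i\theta}$, so the infinite-product side of Theorem~\ref{liunewthmb} collapses to exactly the left-hand side of the proposition.

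The $_4\phi_3$ produced on the other side has top parameters $q^{-n},abcdq^{n-1},ae^{i\theta},ae^{-i\theta}$ and bottom parameters $ab,ac,ad$, which is precisely the one appearing in the Askey-Wilson definition (\ref{askey1}). Using that definition to rewrite this $_4\phi_3$ as $a^n(ab,ac,ad;q)_n^{-1}p_n(a,b,c,d;\cos\theta)$, and combining with the prefactor $(a/q)^n=s^n$ supplied by Theorem~\ref{liunewthmb}, converts the summand into $(sa)^n p_n/(ab,ac,ad;q)_n$, matching the right-hand side of the proposition exactly.

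There is essentially no real obstacle once the substitution is identified, and the substitution itself is forced: requiring the very-well-poised factor to reproduce $(1-abcdq^{2n-1})/(1-abcdq^{-1})$ fixes $\alpha$; requiring $(s^{-1};q)_n$ fixes the parameter playing the role of $a$ in Theorem~\ref{liunewthmb}; and matching the Askey-Wilson bottom parameters together with the top parameters $ae^{\pm i\theta}$ uniquely determines the remaining five parameters above. The hypothesis $\max\{|abcds|,|ab|,|ac|,|ad|,|sae^{\pm i\theta}|\}<1$ in the proposition is exactly the convergence condition demanded by Theorem~\ref{liunewthmb} under this choice.
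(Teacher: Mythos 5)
Your proposal is correct and is essentially the paper's own proof: the paper likewise takes $m=2$ in Theorem~\ref{liunewthmb}, sets $a=qs$, and chooses $\alpha=abcd/q$ with $(\alpha t,\alpha b_1,\alpha b_2,\alpha c_1,\alpha c_2)=(ab,ac,ad,ae^{i\theta},ae^{-i\theta})$, which is exactly your substitution written in product form. The only difference is cosmetic (you spell out the raw parameter values rather than the products), so there is nothing further to add.
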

\begin{proof}
Taking $m=2$ in Theorem~\ref{liunewthmb} and then setting $a=qs$ and $b=t,$ we conclude that
\begin{align*}
&\frac{(q\alpha, \alpha st, \alpha b_1s, \alpha b_2s, \alpha c_1, \alpha c_2; q)_\infty}
{(q\alpha s, \alpha t, \alpha c_1 s, \alpha  c_2s, \alpha b_1, \alpha b_2; q)_\infty}\\
&=\sum_{n=0}^\infty \frac{(1-\alpha q^{2n})(\alpha, s^{-1}; q)_n s^n}{(1-\alpha)(q, q\alpha s; q)_n}
{_4\phi_3}\left({{q^{-n}, \alpha q^n, \alpha c_1, \alpha c_2}\atop{\alpha t, \alpha b_1, \alpha b_2}}; q, q\right).
\end{align*}
Replacing $(\alpha t, \alpha b_1, \alpha b_2, \alpha c_1, \alpha c_2 )$ by $(ab, ac, ad, a
e^{i\theta}, ae^{-i\theta})$ in the above equation and then taking $\alpha=abcdq^{-1}$, we
complete the proof of the theorem.
\end{proof}
Nassrallah and Rahman \cite{NassrallahRahman} used the integral representation of the sum of
two non-terminating $_3\phi_2$ series and the Askey-Wilson integral to find the following
$q$-beta integral formula. In this section, we will use Proposition~\ref{awgenpp} to give
a new proof of the Nassrallah-Rahman integral formula \cite[Eq. (6.3.7)]{Gas+Rah}.
\begin{thm}\label{nrintthma} {\rm (Nassrallah-Rahman)} For $\max\{|a|, |b|, |c|, |d|, |s|\}<1,$
we have
\begin{align*}
&\int_{0}^{\pi}
\frac{h(\cos 2\theta; 1)h(\cos \theta; r) d\theta}
{h(\cos \theta; a, b, c, d, s)} \\
&=\frac{2\pi(r/s, rs, abcs, bcds, acds, abds; q)_\infty}
{(q, ab, ac, ad, as, bc, bd, bs, cd, cs, ds, abcds^2 ; q)_\infty}\\
&\quad \times {_8W_7}(abcds^2/q; as, bs, cs, ds, abcds/r; q, r/s).
\nonumber
\end{align*}
\end{thm}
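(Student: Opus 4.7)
The plan is to apply Proposition~\ref{awgenpp} in its form with Askey--Wilson parameters $(r,b,c,d)$ and expansion variable $s/r$, giving
\[
\frac{(rbcd,bs,cs,ds;q)_\infty}{(sbcd,rb,rc,rd;q)_\infty}\cdot\frac{h(\cos\theta;r)}{h(\cos\theta;s)}=\sum_{n=0}^\infty E_n\,p_n(r,b,c,d;\cos\theta),
\]
where $E_n=\frac{(1-rbcdq^{2n-1})(rbcdq^{-1},r/s;q)_n\,s^n}{(1-rbcdq^{-1})(q,rb,rc,rd,sbcd;q)_n}$. The key observation is that the Nassrallah--Rahman integrand splits as
\[
\frac{h(\cos 2\theta;1)\,h(\cos\theta;r)}{h(\cos\theta;a,b,c,d,s)}=\frac{h(\cos 2\theta;1)}{h(\cos\theta;a,b,c,d)}\cdot\frac{h(\cos\theta;r)}{h(\cos\theta;s)},
\]
so substituting the above series and integrating term by term yields
\[
I_{\mathrm{NR}}=\frac{(sbcd,rb,rc,rd;q)_\infty}{(rbcd,bs,cs,ds;q)_\infty}\sum_{n=0}^\infty E_n\,K_n,
\]
where $K_n:=\int_0^\pi h(\cos 2\theta;1)\,p_n(r,b,c,d;\cos\theta)/h(\cos\theta;a,b,c,d)\,d\theta$ is a cross-moment of an Askey--Wilson polynomial against a shifted Askey--Wilson weight.

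Each $K_n$ is then evaluated by using the ${_4\phi_3}$ representation of $p_n(r,b,c,d;\cos\theta)$, which reduces $K_n$ to a finite sum of integrals of the form $\int h(\cos 2\theta;1)(re^{i\theta},re^{-i\theta};q)_k/h(\cos\theta;a,b,c,d)\,d\theta$ for $0\le k\le n$. Since $(re^{i\theta},re^{-i\theta};q)_k$ is a polynomial in $\cos\theta$, these are genuine moment integrals of the Askey--Wilson weight, and each can be evaluated directly from the Askey--Wilson integral (Theorem~\ref{thmasw}) by absorbing the finite product into a parameter shift, producing a terminating ${_3\phi_2}$-type expression for $K_n$.

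Substituting these closed forms back into the outer series produces a double sum; the final step is to interchange the order of summation and collapse the inner sum by applying a Bailey-type transformation, specifically the non-terminating Watson $q$-analogue of Whipple's theorem that underlies Proposition~\ref{BWWtrans}. This should yield precisely the very-well-poised ${_8W_7}$ series on the right-hand side of Theorem~\ref{nrintthma}. The main obstacle will be the careful bookkeeping in this double-series manipulation and the recognition of the Bailey transformation that matches the specific $_8W_7$ in the statement; the absolute convergence under $\max\{|a|,|b|,|c|,|d|,|s|\}<1$, which justifies the interchange of summation and integration throughout, is routine.
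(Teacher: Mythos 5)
Your overall strategy --- expand $h(\cos\theta;r)/h(\cos\theta;s)$ via the generating function of Proposition~\ref{awgenpp} with Askey--Wilson parameters $(r,b,c,d)$ and integrate term by term against the Askey--Wilson weight --- is exactly the paper's, and your coefficients $E_n$ agree with the paper's expansion up to the substitution $s\mapsto s/r$ (which the paper performs only at the very end). The gap is in your evaluation of $K_n$. You use the ${_4\phi_3}$ representation of $p_n(r,b,c,d;\cos\theta)$ based at $r$, producing integrals containing $(re^{i\theta},re^{-i\theta};q)_k$, and claim these are "evaluated directly from the Askey--Wilson integral by absorbing the finite product into a parameter shift." That absorption is impossible for generic $r$: the cancellation $(ae^{i\theta},ae^{-i\theta};q)_k/h(\cos\theta;a)=1/h(\cos\theta;aq^k)$ requires the same letter in numerator and weight, and $r$ is not one of the weight parameters $a,b,c,d$. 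So the integrals $\int_0^\pi h(\cos 2\theta;1)(re^{i\theta},re^{-i\theta};q)_k/h(\cos\theta;a,b,c,d)\,d\theta$ are not shifted Askey--Wilson integrals; they are nontrivial moment/connection-coefficient integrals whose closed-form evaluation is essentially of the same depth as the theorem you are proving, so Theorem~\ref{thmasw} alone does not finish the computation.

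The missing idea is the symmetry of the Askey--Wilson polynomial in its four parameters: write $p_n(r,b,c,d;\cos\theta)=p_n(d,b,c,r;\cos\theta)$ and use the ${_4\phi_3}$ representation based at $d$, whose numerator factors are $(de^{i\theta},de^{-i\theta};q)_k$. Since $d$ \emph{is} a weight parameter, these merge with $h(\cos\theta;d)$ to give $1/h(\cos\theta;dq^k)$, and each term of $K_n$ is then a genuine Askey--Wilson integral $I(a,b,c,dq^k)$. The resulting inner sum is a terminating balanced ${_3\phi_2}$, which the paper sums in closed form by the $q$-Pfaff--Saalsch\"utz formula (no Watson--Whipple or Bailey transformation is needed at that stage); the outer sum is then already an ${_8W_7}$, and a single standard ${_8\phi_7}$ transformation (Gasper--Rahman III.24), after relabelling $s\mapsto s/r$ and interchanging $a$ and $d$, brings it to the form stated in Theorem~\ref{nrintthma}. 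With this rebasing step inserted, your argument becomes the paper's proof.
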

\begin{proof} Replacing $a$ by $r$ in Proposition~\ref{awgenpp}, we immediately have
\begin{align*}
&\frac{(rbcd, rbs, rcs, rds; q)_\infty h(\cos \theta; r)}
{(rbcds, rb, rc, rd; q)_\infty h(\cos \theta; rs)}\\
&=\sum_{n=0}^\infty \frac{(1-rbcdq^{2n-1})(rbcdq^{-1}, s^{-1}; q)_n (rs)^n}
{(1-rbcdq^{-1})(q, rb, rc, rd, rbcds; q)_n}p_n(r, b, c, d; \cos \theta).
\end{align*}
It is well-know that the Askey-Wilson polynomials $p_n(r, b, c, d; \cos \theta)$ is symmetric in
$r, b, c$ and $d$ (see, for example \cite[Corollary~4]{Liu2011}).  Thus,  we have
 \begin{align*}
&\frac{(rbcd, rbs, rcs, rds; q)_\infty h(\cos \theta; r)}
{(rbcds, rb, rc, rd; q)_\infty h(\cos \theta; rs)}\\
&=\sum_{n=0}^\infty \frac{(1-rbcdq^{2n-1})(rbcdq^{-1}, s^{-1}; q)_n (rs)^n}
{(1-rbcdq^{-1})(q, rb, rc, rd, rbcds; q)_n}p_n( d, b, c, r; \cos \theta)\\
&=\sum_{n=0}^\infty \frac{(1-rbcdq^{2n-1})(bd, cd, rbcdq^{-1}, s^{-1}; q)_n (rs/d)^n}
{(1-rbcdq^{-1})(q, rb, rc, rbcds; q)_n} \\
&\qquad\times{_4}\phi{_3} \left({q^{-n}, rbcdq^{n-1}, d
e^{i\theta}, de^{-i\theta}\atop{bd, cd, rd}}; q, q\right).
\end{align*}
Multiplying the above equation by $h(\cos 2\theta; 1)/h(\cos \theta; a, b, c, d), $
and then taking the definite integral over $0\le \theta \le \pi,$ we find that
\begin{align*}
&\frac{(rbcd, rbs, rcs, rds; q)_\infty }{(rbcds, rb, rc, rd; q)_\infty }
\int_{0}^{\pi}\frac{h(\cos 2\theta; 1)h(\cos \theta; r) d\theta}{h(\cos \theta; a, b, c, d, sr)}\\
&=\sum_{n=0}^\infty \frac{(1-rbcdq^{2n-1})(bd, cd, rbcdq^{-1}, s^{-1}; q)_n (rs/d)^n}
{(1-rbcdq^{-1})(q, rb, rc, rbcds; q)_n}\\
&\qquad \times \sum_{k=0}^n \frac{(q^{-n}, rbcdq^{n-1}; q)_k q^k}{(q, bd, cd, rd)_k}
\int_{0}^{\pi}\frac{h(\cos 2\theta; 1) d\theta}{h(\cos \theta; a, b, c, dq^k)}.
\end{align*}
With the help of the Askey-Wilson integral in Theorem~\ref{thmasw}, we  find that
\[
\int_{0}^{\pi}\frac{h(\cos 2\theta; 1) d\theta}{h(\cos \theta; a, b, c, dq^k)}
=\frac{2\pi  (ad, bd, cd; q)_k (abcd; q)_\infty}{(abcd; q)_k(q, ab, ac, ad, bc, bd, cd; q)_\infty}.
\]
It follows that
\begin{align*}
&\sum_{k=0}^n \frac{(q^{-n}, rbcdq^{n-1}; q)_k q^k}{(q, bd, cd, rd)_k}
\int_{0}^{\pi}\frac{h(\cos 2\theta; 1) d\theta}{h(\cos \theta; a, b, c, dq^k)}\\
&=\frac{2\pi (abcd; q)_\infty}{(q, ab, ac, ad, bc, bd, cd; q)_\infty}
{_3}\phi{_2} \left({{q^{-n}, rbcdq^{n-1}, ad}\atop{abcd, rd}}; q, q\right)
\end{align*}
We can apply the $q$-Pfaff-Saalsch\"{u}tz formula to sum the ${_3\phi_2}$
series on the right hand side of the above equation to obtain
\[
{_3}\phi{_2} \left({{q^{-n}, rbcdq^{n-1}, ad}\atop{abcd, rd}}; q, q\right)
=\frac{(bc, r/a; q)_n (ad)^n}{(rd, abcd; q)_n}.
\]
Combining the above equations, we finally conclude that
\begin{align*}
&\int_{0}^{\pi}\frac{h(\cos 2\theta; 1)h(\cos \theta; r) d\theta}{h(\cos \theta; a, b, c, d, sr)}\\
&=\frac {2\pi(abcd, rbcds, rb, rc, rd; q)_\infty }{(q, ab, ac, ad, bc, bd, cd, rbcd, rbs, rcs, rds; q)_\infty }\\
&\times{_8W_7}(rbcdq^{-1}; bc, bd, cd, s^{-1}, r/a; q, ars).
\end{align*}
Replacing $s$ by $s/r$ in the above equation and then interchanging $a$ and $d$,  we deduce that
\begin{align}
&\int_{0}^{\pi}\frac{h(\cos 2\theta; 1)h(\cos \theta; r) d\theta}{h(\cos \theta; a, b, c, d, s)}\nonumber\\
&=\frac {2\pi(abcd, abcs, ra,  rb, rc; q)_\infty }{(q, ab, ac, ad, bc, bd, cd, rabc, as, bs, cs; q)_\infty }\label{nreqn}\\
&\qquad \times {_8W_7}(rabcq^{-1}; r/s,  ab, ac, bc, r/d; q, ds).\nonumber
\end{align}
Applying the transformation formula for $_8\phi_7$ in \cite[III. 24]{Gas+Rah} to the right-hand of the above equation,
we complete the proof of Theorem~\ref{nrintthma}.
\end{proof}
Taking $r=0$ in (\ref{nreqn}), we immediately find,   for $\max\{|a|, |b|, |c|, |d|, |s|\}<1,$ that
\begin{align}
&\int_{0}^{\pi}\frac{h(\cos 2\theta; 1) d\theta}{h(\cos \theta; a, b, c, d, s)}\label{liubeqn}\\
&=\frac {2\pi(abcd, abcs; q)_\infty }{(q, ab, ac, ad, bc, bd, cd, as, bs, cs; q)_\infty }
 {_3\phi_2}\left({{ab, ac, bc}\atop{abcd, abcs}}; q, ds\right).\nonumber
\end{align}
\section{another proof of the Nassrallah-Rahman integral formula}

The Al-Salam and Verma $q$-integral formula \cite[Eq. (1.3)]{SalamVerma} can be stated in the following proposition.
\begin{prop}\label{salamverma}  If there are no zero factors in the denominator of the integral, then,
we have
\[
\int_{d}^s \frac{(qx/d, qx/s, abcdsx; q)_\infty}{(ax, bx, cx; q)_\infty} d_q x=
\frac{(1-q)s (q, d/s, qs/d, abds, acds,  bcds; q)}{(ad, as, bd, bs, cd, cs; q)_\infty}.
\]
\end{prop}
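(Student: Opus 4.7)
The plan is to derive Proposition~\ref{salamverma} as a clean specialization of the general $q$-integral evaluation in Proposition~\ref{LBWW}, followed by an application of the Dougall-type sum Proposition~\ref{qdougallpp}; both results are already at our disposal from Sections~3--4. Because those two cited propositions already use the letter $s$ as a dummy parameter, the first move is purely notational: I rename LBWW's parameters $(r,s,t)$ as $(r,s^{*},t)$ so that the external parameter $s$ of Proposition~\ref{salamverma} is not overloaded. Matching the Al-Salam--Verma integrand $(qx/d,qx/s,abcdsx;q)_{\infty}/(ax,bx,cx;q)_{\infty}$ with LBWW's integrand $(qx/u,qx/v,hx;q)_{\infty}/(rx,s^{*}x,tx;q)_{\infty}$ dictates the substitution
\[
u=d,\quad v=s,\quad h=abcds,\quad r=a,\quad s^{*}=b,\quad t=c,
\]
so that LBWW's parameter $\lambda=rhuv/q$ becomes $\lambda=a^{2}bcd^{2}s^{2}/q$.

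Under this specialization the prefactor on the right of Proposition~\ref{LBWW} reads
\[
\frac{(1-q)s\,(q,\,d/s,\,qs/d,\,abcd^{2}s,\,abcds^{2},\,abds,\,acds;q)_{\infty}}{(a^{2}bcd^{2}s^{2},\,ad,\,as,\,bd,\,bs,\,cd,\,cs;q)_{\infty}},
\]
while the accompanying series carries numerator parameters $(\lambda,ad,as,h/s^{*},h/t)=(\lambda,ad,as,acds,abds)$ and denominator parameters $(q,abcd^{2}s,abcds^{2},rs^{*}uv,rtuv)=(q,abcd^{2}s,abcds^{2},abds,acds)$. The decisive observation is that $h/s^{*}=acds=rtuv$ and $h/t=abds=rs^{*}uv$, so that two pairs of $q$-shifted factorials telescope. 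What remains is the genuinely very-well-poised sum
\[
\sum_{n=0}^{\infty} \frac{(1-\lambda q^{2n})(\lambda,ad,as;q)_{n}}{(1-\lambda)(q,abcd^{2}s,abcds^{2};q)_{n}}\,(-bcds)^{n}q^{n(n-1)/2}.
\]

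I would then match this residual series with Proposition~\ref{qdougallpp} under the dictionary $\alpha=\lambda$, $s^{-1}=as$, $r^{-1}=ad$ (where $s,r$ are now the dummies internal to qdougallpp, not the outer parameters): one checks that $q\alpha s=abcd^{2}s$, $q\alpha r=abcds^{2}$, and that $(-\alpha rs)^{n}q^{n(n+1)/2}$ collapses to $(-bcds)^{n}q^{n(n-1)/2}$ after absorbing a factor of $q^{-n}$. Proposition~\ref{qdougallpp} then evaluates the sum in closed form as
\[
\frac{(a^{2}bcd^{2}s^{2},\,bcds;q)_{\infty}}{(abcd^{2}s,\,abcds^{2};q)_{\infty}}.
\]
Multiplying this into the prefactor above, the infinite $q$-shifted factorials $(a^{2}bcd^{2}s^{2};q)_{\infty}$, $(abcd^{2}s;q)_{\infty}$ and $(abcds^{2};q)_{\infty}$ all cancel between numerator and denominator, leaving exactly the right-hand side of Proposition~\ref{salamverma}. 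No step is deep; the only real obstacle is the bookkeeping of the symbol $s$ playing three distinct roles, which the opening renaming neutralizes.
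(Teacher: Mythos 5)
Your derivation is correct, but it is genuinely different from the paper's treatment, because the paper contains no proof of Proposition \ref{salamverma} at all: it is imported by citation from Al-Salam and Verma \cite[Eq.\ (1.3)]{SalamVerma} and then used as an ingredient in Section~6. I verified your bookkeeping: with $(u,v,h,r,s^{*},t)=(d,s,abcds,a,b,c)$ in Proposition \ref{LBWW} one gets $\lambda=a^{2}bcd^{2}s^{2}/q$, the numerator entries $h/s^{*}=acds$ and $h/t=abds$ cancel against the denominator entries $rtuv=acds$ and $rs^{*}uv=abds$, and the residual series matches Proposition \ref{qdougallpp} with $\alpha=\lambda$ and internal parameters determined by $s^{-1}=as$, $r^{-1}=ad$, since $q\lambda/(as)=abcd^{2}s$, $q\lambda/(ad)=abcds^{2}$, and $-\lambda/(a^{2}ds)=-bcds/q$, whence $(-\alpha rs)^{n}q^{n(n+1)/2}=(-bcds)^{n}q^{n(n-1)/2}$; the closed form $(a^{2}bcd^{2}s^{2},bcds;q)_{\infty}/(abcd^{2}s,abcds^{2};q)_{\infty}$ then telescopes against the prefactor to give exactly the stated right-hand side (incidentally fixing the paper's typographical omission of the subscript $\infty$ in the numerator). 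Two caveats deserve explicit mention. First, Proposition \ref{qdougallpp} requires $\max\{|abcd^{2}s|,|abcds^{2}|\}<1$, and the $_3\phi_2$ underlying Proposition \ref{LBWW} requires $|ru|=|ad|<1$, whereas Proposition \ref{salamverma} is asserted only under ``no zero factors''; you should close this gap by analytic continuation in the parameters, both sides being meromorphic. Second, there is a potential circularity to flag: Proposition \ref{LBWW} rests on the $q$-integral of \cite[Theorem~9]{Liu2010}, so your argument is an independent proof of Al-Salam--Verma only if that cited integral is not itself derived from it; within the logical structure of the present paper, however, your derivation is legitimate and has the merit of making Section~6 self-contained from the machinery of Sections~3--4, whereas the paper simply buys the result off the shelf.
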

The following $q$-integral formula is a special case of \cite[Eq. (2.10.19)]{Gas+Rah}.
 Now we will use Theorem~\ref{nrintthma} to give a derivation of this $q$-integral formula.
\begin{thm}\label{qbaileythm}
If there are no zero factors in the denominator of the integral and $|r/s|<1$, then,
we have
\begin{align*}
&\int_{d}^s \frac{(abcx, qx/d, qx/s, rx; q)_\infty d_qx}{( ax, bx, cx, rx/ds; q)_\infty}\\
&=\frac{(1-q)s(q, d/s, qs/d, rs, abcs, acds, abds, bcds; q)_\infty }
{(r/d, ad, bd, cd, as, bs,cs,  abcds^2; q)_\infty }\\
&\qquad \times {_8W_7}(abcds^2/q; as, bs, cs, ds, abcds/r; q, r/s).
\end{align*}
\end{thm}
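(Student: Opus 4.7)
The plan is to evaluate the $q$-integral on the left-hand side using the definition of the $q$-integral, express it as a difference of two balanced non-terminating ${_4\phi_3}$ series, and then invoke Bailey's three-term transformation together with Theorem~\ref{nrintthma} to identify this combination with the very-well-poised ${_8W_7}$ series on the right-hand side.

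First I would apply the definition
\[
\int_d^s f(x)\, d_q x = (1-q)\sum_{n=0}^\infty \bigl[s f(sq^n) - d f(dq^n)\bigr] q^n
\]
to the integrand $f(x)=(abcx, qx/d, qx/s, rx; q)_\infty /(ax, bx, cx, rx/ds; q)_\infty$. Using $(Aq^n; q)_\infty = (A; q)_\infty/(A; q)_n$, a short computation shows that the LHS of Theorem~\ref{qbaileythm} equals
\[
(1-q)s\,\frac{(abcs, qs/d, rs, q; q)_\infty}{(as, bs, cs, r/d; q)_\infty}\,{_4\phi_3}\!\left({{as, bs, cs, r/d}\atop{abcs, qs/d, rs}}; q, q\right)
\]
minus the analogous term in which $s$ and $d$ are interchanged. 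A direct check (product of upper parameters times $q$ equals product of lower parameters) verifies that both ${_4\phi_3}$ series are Saalschützian.

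Next I would apply the non-terminating three-term transformation relating a very-well-poised ${_8\phi_7}$ series to a linear combination of two balanced ${_4\phi_3}$ series (Gasper--Rahman, Eq.~(III.36)). With the identifications $a=abcds^2/q$ and $\{b, c, d, e, f\}=\{as, bs, cs, ds, abcds/r\}$ in Bailey's formula, the two ${_4\phi_3}$ series obtained above match exactly the two ${_4\phi_3}$ summands appearing there, and the whole combination is recognized as a scalar multiple of ${_8W_7}(abcds^2/q; as, bs, cs, ds, abcds/r; q, r/s)$. Theorem~\ref{nrintthma} then anchors the identification: this is precisely the ${_8W_7}$ whose integral representation was established in Section~5, so the answer can equivalently be checked by substituting the NR integral on both sides.

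The main obstacle is the $q$-product bookkeeping. One has to verify that the prefactors produced by Bailey's three-term identity telescope, after substantial cancellation, to
\[
\frac{(1-q)s(q, d/s, qs/d, rs, abcs, acds, abds, bcds; q)_\infty}{(r/d, ad, bd, cd, as, bs, cs, abcds^2; q)_\infty},
\]
which is the coefficient displayed in the theorem. The computation is mechanical but requires careful tracking of the $(as, bs, cs, ds)$-type factors and their counterparts $(bcds, acds, abds, abcs)$ generated by the lower parameters of the ${_8\phi_7}$.
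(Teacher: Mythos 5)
Your proposal is correct, but it follows a genuinely different route from the paper. You expand the $q$-integral by its definition into the difference of two balanced ${_4\phi_3}$ series (your identification checks out: with $A=abcds^2/q$, $B=ds$, $C=abcds/r$, $D=as$, $E=bs$, $F=cs$ the two series are exactly the two Saalsch\"utzian ${_4\phi_3}$'s in Bailey's nonterminating three-term transformation \cite[(III.36)]{Gas+Rah}, the argument $A^2q^2/BCDEF$ equals $r/s$, and the prefactors do collapse to the stated coefficient once one uses $s(d/s;q)_\infty(qs/d;q)_\infty=-d(s/d;q)_\infty(qd/s;q)_\infty$). This is essentially the textbook derivation of \cite[(2.10.19)]{Gas+Rah}, of which the theorem is a special case. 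The paper instead never touches Bailey's three-term relation: it replaces $d$ by $x$ in the Askey--Wilson integral, multiplies by $(qx/d,qx/s,drsx;q)_\infty/(rx;q)_\infty$, $q$-integrates over $[d,s]$, and evaluates the inner $q$-integral in closed form by the Al-Salam--Verma formula, thereby expressing the left-hand side as an explicit multiple of $\int_0^\pi h(\cos2\theta;1)h(\cos\theta;r)/h(\cos\theta;a,b,c,d,s)\,d\theta$; Theorem~\ref{nrintthma} then finishes. The paper's route stays inside its own machinery and yields the intermediate identity (\ref{qbaileyeqn1}) as a reusable byproduct (it powers the second proof of the Nassrallah--Rahman integral in the same section), whereas your route is shorter but imports a heavy external transformation. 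One small point: in your argument the appeal to Theorem~\ref{nrintthma} is superfluous --- once (III.36) is applied the ${_8W_7}$ appears directly, and no integral representation is needed.
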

\begin{proof} Letting $d=x$ in the Askey-Wilson integral in Theorem~\ref{thmasw} and then multiplying both sides
of the resulting equation by $(qx/d, qx/s, drsx; q)_\infty/(rx; q)_\infty $, we obtain
\[
\int_{0}^{\pi} \frac{h(\cos 2\theta; 1)(qx/d, qx/s, drsx; q)_\infty d\theta}{h(\cos \theta; a, b, c, x)(rx; q)_\infty}
=\frac{2\pi (abcx, qx/d, qx/s, drsx; q)_\infty}{(q, ab, ac, bc, ax, bx, cx, rx; q)_\infty}.
\]
Taking the $q$-integral over $d\le x\le s$ in the both side of the above equation, we obtain
\begin{align*}
&\int_{0}^{\pi} \frac{h(\cos 2\theta; 1) d\theta}{h(\cos \theta; a, b, c)}\int_{d}^s \frac{(qx/d, qx/s, drsx; q)_\infty d_qx}
{(xe^{i\theta}, xe^{-i\theta}, rx; q)_\infty}\\
&=\frac{2\pi}{(q, ab, ac, bc; q)_\infty}\int_{d}^s \frac{(abcx, qx/d, qx/s, drsx; q)_\infty d_qx}{( ax, bx, cx, rx; q)_\infty}.
\end{align*}
Using the Al-Salam and Verma $q$-integral formula, we immediately find that
\[
\int_{d}^s \frac{(qx/d, qx/s, drsx; q)_\infty d_qx}
{(xe^{i\theta}, xe^{-i\theta}, rx; q)_\infty}
=\frac{(1-q)s(q, d/s, qs/d, ds; q)_\infty h(\cos \theta; drs)}
{(rd, rs; q)_\infty h(\cos \theta; d, s)}
\]
Combining the above two equations, we arrive at
\begin{align*}
&\int_{d}^s \frac{(abcx, qx/d, qx/s, drsx; q)_\infty d_qx}{( ax, bx, cx, rx; q)_\infty}\\
&=\frac{(1-q)s(q, q, ab, ac, bc,  d/s, qs/d, ds; q)_\infty }
{2\pi(rd, rs; q)_\infty } \int_{0}^{\pi} \frac{h(\cos 2\theta; 1) h(\cos \theta; drs) d\theta}{h(\cos \theta; a, b, c, d, s)}.
\end{align*}
Replacing $r$ by $r/ds$ in the above equation, we find that
\begin{equation}
\int_{d}^s \frac{(abcx, qx/d, qx/s, rx; q)_\infty d_qx}{( ax, bx, cx, rx/ds; q)_\infty}\label{qbaileyeqn1}
\end{equation}
\[
=\frac{(1-q)s(q, q, ab, ac, bc,  d/s, qs/d, ds; q)_\infty }
{2\pi(r/d, r/s; q)_\infty } \int_{0}^{\pi} \frac{h(\cos 2\theta; 1) h(\cos \theta; r) d\theta}{h(\cos \theta; a, b, c, d, s)}.
\]
Applying Theorem~\ref{nrintthma} to the above equation, we complete the proof of Theorem~\ref{qbaileythm}.
\end{proof}
Next we will use (\ref{qbaileyeqn1}) to give another proof of the Nassrallah-Rahman integral formula.
\begin{proof}
If we choose $r=abcds$ in (\ref{qbaileyeqn1}),  we immediately deduce that
\begin{align*}
&\int_{d}^s \frac{(abcdsx, qx/d, qx/s; q)_\infty d_qx}{( ax, bx, cx; q)_\infty}\\
&=\frac{(1-q)s(q, q, ab, ac, bc,  d/s, qs/d, ds; q)_\infty }
{2\pi(abcs, abcd; q)_\infty } \int_{0}^{\pi} \frac{h(\cos 2\theta; 1) h(\cos \theta; abcds) d\theta}{h(\cos \theta; a, b, c, d, s)}.
\end{align*}
Applying the Al-Salam and Verma $q$-integral formula to the left-hand of the above equation and simplifying, we
find, for $\max\{|a|, |b|, |c|, |d|, |s|\}<1,$ that \cite[Eq. (6.4.1)]{Gas+Rah}
\begin{align}
&\int_{0}^{\pi} \frac{h(\cos 2\theta; 1) h(\cos \theta; abcds) d\theta}{h(\cos \theta; a, b, c, d, s)} \label{nhinteqn}\\
&=\frac{2\pi (abcd, abcs, abds, acds, bcds; q)_\infty}{(q, ab, ac, ad, as, bc, bd, bs, cd, cs, ds; q)_\infty}.
\nonumber
\end{align}
Using  $T(\theta)$ to denote the integrand of the above integral
and the value of this integral by $I, $ then,  we have
\[
\int_{0}^{\pi} T(\theta){d\theta}
=I.
\]
Replacing $s$ by $sq^n$ in the above equation and simplifying, we easily obtain
\[
\int_{0}^{\pi}  \frac{T(\theta)(se^{i\theta}, se^{-i\theta}; q)_n}
{(abcds e^{i\theta}, abcds e^{-i\theta} ; q)_n}{d\theta}
=\frac{(as, bs, cs, ds; q)_n }{(abcs, abds, acds, bcds; q)_n}I.
\]
If we multiply both sides of the above equation by the following factor:
\[
\frac{(1-abcds^2 q^{2n-1})(abcds^2/q, abcds/r; q)_n (r/s)^n}{(1-abcds^2/q)(q, rs, r/s; q)_n},
\]
and then summing the resulting the equation over, $0\le n \le \infty,$ we find that
\begin{align*}
&\int_{0}^{\pi} T(\theta) {_6W_5}(abcds^2/q; abcds/r, se^{i\theta}, se^{-i\theta}; q, r/s)
{d\theta}\\
&= {_8W_7}(abcds^2/q; as, bs, cs, ds, abcds/r; q, r/s)I.
\end{align*}
Using the $q$-Dougall summation, we immediately find that
\[
{_6W_5}(abcds^2/q; abcds/r, se^{i\theta}, se^{-i\theta}; q, r/s)
=\frac{(abcds^2, abcd; q)_\infty h(\cos \theta; r)}
{(rs, r/s; q)_\infty h(\cos \theta; abcds)}.
\]
Combining the above two equations, we complete the proof of the theorem.
\end{proof}
\section{A new $q$-beta integral formula}
We first recall the following well-known $q$-formula (see, for example \cite[p. 62]{Gas+Rah}, \cite[Theorem~1.8]{Liu2013}).
\begin{prop}\label{qtransfliu}For $|\alpha xy/q|<1$, we have the $q$-transformation formula
\begin{align*}
&\frac{(\alpha q, \alpha xy/q; q)_\infty}
{(\alpha x, \alpha y; q)_\infty} {_3\phi_2} \left({{q/x, q/y, \alpha uv/q} \atop {\alpha u, \alpha v}} ;  q, \frac{\alpha xy}{q} \right) \\
&=\sum_{n=0}^\infty \frac{(1-\alpha q^{2n}) (\alpha, q/x, q/y, q/u, q/v; q)_n (-\alpha^2 xyuv/q^2)^n q^{n(n-1)/2}}
{(1-\alpha)(q, \alpha x, \alpha y, \alpha u, \alpha v; q)_n}.
\end{align*}
\end{prop}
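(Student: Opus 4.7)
The plan is to derive Proposition~\ref{qtransfliu} as the non-terminating limit $n\to\infty$ of Watson's $q$-analogue of Whipple's theorem, the same transformation already invoked in the proof of Proposition~\ref{BWWtrans}. Concretely, I would begin with Watson's identity
\begin{align*}
&{}_8\phi_7\!\left({{\alpha, q\sqrt{\alpha}, -q\sqrt{\alpha}, a, b, c, d, q^{-n}} \atop {\sqrt{\alpha}, -\sqrt{\alpha}, q\alpha/a, q\alpha/b, q\alpha/c, q\alpha/d, \alpha q^{n+1}}}; q, \frac{\alpha^2 q^{n+2}}{abcd}\right)\\
&\quad= \frac{(q\alpha, q\alpha/cd; q)_n}{(q\alpha/c, q\alpha/d; q)_n}\,{}_4\phi_3\!\left({{q^{-n}, c, d, q\alpha/ab} \atop {q\alpha/a, q\alpha/b, cdq^{-n}/\alpha}}; q, q\right)
\end{align*}
and specialize $(a,b,c,d)=(q/u, q/v, q/x, q/y)$, so that $(q\alpha/a, q\alpha/b, q\alpha/c, q\alpha/d)=(\alpha u,\alpha v,\alpha x,\alpha y)$, $q\alpha/(ab)=\alpha uv/q$, $q\alpha/(cd)=\alpha xy/q$, and $\alpha^2 q^2/(abcd)=\alpha^2 xyuv/q^2$.

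Next, I would let $n\to\infty$ and pass to the limit termwise on both sides. On the right, the standard limit $(q^{-n};q)_j/(cdq^{-n}/\alpha;q)_j\to(\alpha/(cd))^j$ converts the terminating ${}_4\phi_3$ of argument $q$ into a non-terminating ${}_3\phi_2$ of argument $q\alpha/(cd)=\alpha xy/q$, while the prefactor tends to $(\alpha q,\alpha xy/q;q)_\infty/(\alpha x,\alpha y;q)_\infty$; this is precisely the left-hand side of Proposition~\ref{qtransfliu}. On the left, the companion limit $(q^{-n};q)_k q^{nk}/(\alpha q^{n+1};q)_k\to(-1)^k q^{k(k-1)/2}$, combined with the $n$-free part of the argument $\alpha^2 q^{n+2}/(abcd)$, produces the Gaussian-type weight $(-\alpha^2 xyuv/q^2)^k q^{k(k-1)/2}$, and the remaining very-well-poised factors coincide with those appearing on the right-hand side of Proposition~\ref{qtransfliu}.

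The hypothesis $|\alpha xy/q|<1$ stated in Proposition~\ref{qtransfliu} is exactly the convergence condition for the limiting ${}_3\phi_2$; the very-well-poised series on the right-hand side converges absolutely for all admissible parameters thanks to the factor $q^{n(n-1)/2}$, so the termwise passage to the limit is justified by dominated convergence.

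The main obstacle is the careful bookkeeping of the $n$-dependent factors on both sides of Watson's formula. Once the two termwise limits $(q^{-n};q)_k q^{nk}/(\alpha q^{n+1};q)_k\to(-1)^k q^{k(k-1)/2}$ and $(q^{-n};q)_j/(cdq^{-n}/\alpha;q)_j\to(\alpha/(cd))^j$ are verified by direct calculation (both follow by pulling out the leading power of $q^{-n}$ from each Pochhammer factor), the remainder of the proof reduces to a straightforward side-by-side comparison of the two limiting series with the two sides of Proposition~\ref{qtransfliu}.
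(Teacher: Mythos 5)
Your derivation is correct, and the bookkeeping checks out: with $(a,b,c,d)=(q/u,q/v,q/x,q/y)$ one gets $q\alpha/(ab)=\alpha uv/q$, $q\alpha/(cd)=\alpha xy/q$, and argument $\alpha^2uvxyq^{n-2}$, and your two confluent limits $(q^{-n};q)_kq^{nk}/(\alpha q^{n+1};q)_k\to(-1)^kq^{k(k-1)/2}$ and $(q^{-n};q)_j/(cdq^{-n}/\alpha;q)_j\to(\alpha xy/q^2)^j$ turn the terminating $_8\phi_7$ into exactly the very-well-poised series on the right of Proposition~\ref{qtransfliu} and the balanced $_4\phi_3$ into the $_3\phi_2$ with argument $\alpha xy/q$ on the left, with $|\alpha xy/q|<1$ serving both as the convergence condition and the domination hypothesis. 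Be aware, though, that the paper itself gives no proof of this proposition: it is recalled as a known formula with pointers to \cite[p.~62]{Gas+Rah} and \cite[Theorem~1.8]{Liu2013}, and in the latter it is obtained from the author's $q$-expansion machinery (the method behind Theorems~\ref{liuthm1} and~\ref{liuthm2}) rather than from the classical transformation theory. So your route is genuinely different from the cited derivation, but it is the precise analogue of what the paper does for Proposition~\ref{BWWtrans} in Section~4, where Watson's theorem is first combined with the $_8\phi_7$ transformation \cite[Eq.~(2.10.3)]{Gas+Rah} and then $n\to\infty$ is taken; your target requires no intermediate transformation and falls out of Watson's identity directly, which makes the argument shorter and more classical, whereas the expansion-theorem route of \cite{Liu2013} is self-contained within the author's framework and does not presuppose Watson. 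The one point to tighten is the termwise limit on the $_4\phi_3$ side: for finitely many pairs $(n,j)$ the factors $(cdq^{-n}/\alpha;q)_j$ can be small or vanish, so the uniform domination should be asserted for generic parameters (or for $n$ large enough that these factors are bounded away from zero), with the general case recovered by analytic continuation in the parameters; with that standard caveat your Tannery-type justification is sound.
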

Now we begin to prove Theorem~\ref{liunrintthm} by using the above proposition and the $q$-beta integral formula in (\ref{nhinteqn}).
\begin{proof} Setting $x=(q/a) e^{i\theta}, y=(q/a) e^{-i\theta}$ and $\alpha=a^2bcds/q$ in Proposition~\ref{qtransfliu}, we
obtain
\begin{align*}
&\frac{(q\alpha, bcds; q)_\infty}
{h(\cos \theta; abcds)} {_3\phi_2} \left({{ae^{i\theta}, ae^{i\theta}, \alpha uv/q} \atop {\alpha u, \alpha v}} ;  q, bcds \right) \\
&=\sum_{n=0}^\infty \frac{(1-\alpha q^{2n}) (\alpha, ae^{i\theta}, ae^{i\theta} , q/u, q/v; q)_n (-\alpha^2 uv/a^2)^n q^{n(n-1)/2}}
{(1-\alpha)(q , abcdse^{i\theta}, abcds e^{i\theta}, \alpha u, \alpha v; q)_n}.
\end{align*}
If we multiply both sides of the above equation by the factor
\[
\frac{h(\cos 2\theta; 1)h(\cos \theta; abcds)}{h(\cos \theta; a, b, c, d, s)},
\]
and then take the definite integral over $0\le \theta \le \pi$ in the resulting equation,  we deduce that
\begin{align*}
&\int_{0}^{\pi} \frac{h(\cos 2\theta; 1)}{h(\cos \theta; a, b, c, d, s)}
{_3\phi_2}\({{ae^{i\theta}, ae^{i\theta}, \alpha uv/q}\atop{\alpha u, \alpha v}}; q, bcds\) d\theta\\
&=\frac{1}{(q\alpha, bcds; q)_\infty}
\sum_{n=0}^\infty \frac{(1-\alpha q^{2n}) (\alpha, q/u, q/v; q)_n}
{(1-\alpha)(q, \alpha u, \alpha v; q)_n}
\left(-\alpha^2 uv/a^2 \right)^n q^{n(n-1)/2}\\
&\quad \times
\int_{0}^{\pi} \frac{h(\cos 2\theta; 1) h(\cos \theta; abcdsq^n) d\theta}{h(\cos \theta; aq^n, b, c, d, s)}.
\end{align*}
If $a$ is replace by $aq^n$ in (\ref{nhinteqn}), then,  we immediately conclude  that
\begin{align*}
&\int_{0}^{\pi} \frac{h(\cos 2\theta; 1) h(\cos \theta; abcdsq^n) d\theta}{h(\cos \theta; aq^n, b, c, d, s)} \\
&=\frac{2\pi (abcd, abcs, abds, acds, bcds; q)_\infty (ab, ac, ad, as; q)_n}
{(q, ab, ac, ad, as, bc, bd, bs, cd, cs, ds; q)_\infty (abcd, abcs, abds, acds; q)_n}.
\end{align*}
Combining the above two equations, we complete the proof of Theorem~\ref{liunrintthm}.
\end{proof}
When $s=0$,  it is obvious that Theorem~\ref{liunrintthm} immediately becomes the Askey-Wilson integral.

When $u=q,$ the series in the right-hand side of the equation in Theorem~\ref{liunrintthm} immediately reduces to $1,$
and the $_3\phi_2$ series becomes a $_2\phi_1$ series which can be summed by the $q$-Gauss summation formula,
\[
{_3\phi_2}\({{ae^{i\theta}, ae^{i\theta}}\atop{q\alpha}}; q, bcds\)
=\frac{h(\cos \theta; abcds)}{(q\alpha, bcds; q)_\infty}.
\]
Hence, in this case, the integral formula in Theorem~\ref{liunrintthm} becomes the $q$-integral formula
(\ref{nhinteqn}).

Letting $v \to \infty$ in Theorem~\ref{liunrintthm} and by a direct computation, we easily deduce that
\begin{align*}
&\int_{0}^{\pi} \frac{h(\cos 2\theta; 1)}{h(\cos \theta; a, b, c, d, s)}
{_2\phi_1}\({{ae^{i\theta}, ae^{i\theta}}\atop{\alpha u}}; q, \frac{\alpha u}{a^2}\) d\theta\\
&=\frac{ 2\pi (abcd, abcs, abds, acds; q)_\infty}{(q, ab, ac, ad, as, bc, bd, bs, cd, cs, ds, q\alpha; q)_\infty}\\
&\quad \times
{_8W_7}(\alpha; q/u, ab, ac, ad, as; q,  \alpha uv/a^2).
\end{align*}
The $_2\phi_1$ series in the above equation can be summed by the $q$-Gauss summation,
\[
{_2\phi_1}\({{ae^{i\theta}, ae^{i\theta}}\atop{\alpha u}}; q, \frac{\alpha u}{a^2}\)=\frac{h(\cos \theta; \alpha u/ a)}{(\alpha u, \alpha u/a^2; q)_\infty}.
\]
Combining the above two equations, we are led to the $q$-beta integral formula
\begin{align*}
&\int_{0}^{\pi} \frac{h(\cos 2\theta; 1)h(\cos \theta; \alpha u/ a)}{h(\cos \theta; a, b, c, d, s)} d\theta\\
&=\frac{ 2\pi (abcd, abcs, abds, acds, \alpha u, \alpha u/a^2; q)_\infty}{(q, ab, ac, ad, as, bc, bd, bs, cd, cs, ds, q\alpha; q)_\infty}\\
&\quad \times
{_8W_7}(\alpha; q/u, ab, ac, ad, as; q, \alpha uv/a^2).
\end{align*}
Setting $u=qr/abcds$ in the above equation and noting that $q\alpha=a^2bcds $, we find that
for $\max\{|a|, |b|, |c|, |d|, |s|\}<1, $
\begin{align*}
&\int_{0}^{\pi} \frac{h(\cos 2\theta; 1)h(\cos \theta; r)}{h(\cos \theta; a, b, c, d, s)} d\theta\\
&=\frac{ 2\pi (abcd, abcs, abds, acds, ar, r/a; q)_\infty}{(q, ab, ac, ad, as, bc, bd, bs, cd, cs, ds, a^2bcds; q)_\infty}\\
&\quad \times
{_8W_7}(a^2bcds/q; abcds/r, ab, ac, ad, as; q, r/a),
\end{align*}
which is the same as the integral formula in Theorem~\ref{nrintthma} if we interchanging $a$ and $s$.
\section{Strange evaluations of basic hypergeometric series}
Theorem \ref{liunewthmb} can be used to provide new proofs of some strange $q$-series identity.
We begin by proving  the following strange $q$-series identity
due to Andrews \cite[Eq. (4.5)]{Andrews79} (see also, \cite[Eq. (4.26)]{GesselStanton} and \cite[Eq. (4.5d)]{Chu1994}).
\begin{prop}\label{andrewspp1} {(\rm Andrews)}  We have the summation formula
\begin{align*}
&{_5\phi_4} \left({{q^{-n}, \alpha q^n, \alpha^{1/3}q^{1/3}, \alpha^{1/3}q^{2/3}, \alpha^{1/3} q}
\atop {\alpha^{1/2}q, -\alpha^{1/2}q, \alpha^{1/2}q^{1/2}, -\alpha^{1/2}q^{1/2}}} ;  q, q \right)\\
&=\frac{(1-\alpha)(1-\alpha^{1/3}q^{2n/3})(q; q)_n (\alpha^{1/3}; q^{1/3})_n (q\alpha)^{n/3}}
{(1-\alpha^{1/3})(1-\alpha q^{2n})(\alpha; q)_n (q^{1/3}; q^{1/3})_n}.
\nonumber
\end{align*}
\end{prop}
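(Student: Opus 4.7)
The plan is to apply Theorem~\ref{liunewthmb} with $m=3$, choosing the eight parameters so that the inner ${}_{m+2}\phi_{m+1}$ on its right-hand side is exactly Andrews' ${}_5\phi_4$. Setting
\[
\alpha b=\alpha^{1/2}q,\quad \alpha b_1=-\alpha^{1/2}q,\quad \alpha b_2=\alpha^{1/2}q^{1/2},\quad \alpha b_3=-\alpha^{1/2}q^{1/2},
\]
\[
\alpha c_1=\alpha^{1/3}q^{1/3},\quad \alpha c_2=\alpha^{1/3}q^{2/3},\quad \alpha c_3=\alpha^{1/3}q,
\]
aligns the upper and lower parameters of the ${}_{m+2}\phi_{m+1}$ in Theorem~\ref{liunewthmb} with those of Proposition~\ref{andrewspp1}.

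Under this specialization, the left-hand product of Theorem~\ref{liunewthmb} simplifies via two elementary collapses: the sign-pairing
\[
(\alpha^{1/2}q,-\alpha^{1/2}q;q)_\infty(\alpha^{1/2}q^{1/2},-\alpha^{1/2}q^{1/2};q)_\infty=(\alpha q^2;q^2)_\infty(\alpha q;q^2)_\infty=(\alpha q;q)_\infty,
\]
which also applies to the $a$-dependent factors $\alpha ab/q,\alpha ab_j/q$, and the cubic collapse $(y,yq^{1/3},yq^{2/3};q)_\infty=(y;q^{1/3})_\infty$, applied to both $\alpha c_j$ and $\alpha ac_j/q$. This reduces Theorem~\ref{liunewthmb} to an identity of the shape
\[
F(a)=\sum_{n=0}^\infty \frac{(1-\alpha q^{2n})(\alpha,q/a;q)_n(a/q)^n}{(1-\alpha)(q,\alpha a;q)_n}\,\Phi_n(\alpha),
\]
where $\Phi_n(\alpha)$ denotes the target ${}_5\phi_4$ of Proposition~\ref{andrewspp1} and $F(a)$ is an explicit ratio of $(\cdot;q)_\infty$ and $(\cdot;q^{1/3})_\infty$ factors.

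To identify $\Phi_n(\alpha)$ I then obtain a \emph{second} expansion of $F(a)$ in the same basis $\{(q/a;q)_n(a/q)^n/(\alpha a;q)_n\}$, arising from a cubic rearrangement: regrouping the factors of $F(a)$ by residue modulo~$3$ in base $q^{1/3}$ and reapplying Theorem~\ref{liunewthmb} (or Rogers' sum, Theorem~\ref{rogersthm}) to the resulting subproducts produces a companion identity
\[
F(a)=\sum_{n=0}^\infty \frac{(1-\alpha q^{2n})(\alpha,q/a;q)_n(a/q)^n}{(1-\alpha)(q,\alpha a;q)_n}\,R_n(\alpha),
\]
in which $R_n(\alpha)$ is the explicit ratio on the right-hand side of Proposition~\ref{andrewspp1}. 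Because Theorem~\ref{liuthm1} guarantees that any such $a$-expansion is unique, matching coefficients of $(q/a;q)_n(a/q)^n/(\alpha a;q)_n$ in the two expansions forces $\Phi_n(\alpha)=R_n(\alpha)$, which is Andrews' identity.

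The principal obstacle is the construction of this second expansion. Because $F(a)$ mixes base-$q$ and base-$q^{1/3}$ $q$-shifted factorials, extracting a clean well-poised expansion in the $q$-basis (rather than the $q^{1/3}$-basis) requires a genuine cubic transformation and is not a direct application of any single summation stated earlier in the paper; in particular, recovering the power $(q\alpha)^{n/3}$ and the very-well-poised prefactor $(1-\alpha^{1/3}q^{2n/3})/(1-\alpha q^{2n})$ that appear in $R_n(\alpha)$ is where the delicate work lies.
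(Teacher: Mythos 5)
Your first half follows the paper's argument exactly: the $m=3$ specialization of Theorem~\ref{liunewthmb} with $(\alpha b,\alpha b_1,\alpha b_2,\alpha b_3)=(\alpha^{1/2}q,-\alpha^{1/2}q,\alpha^{1/2}q^{1/2},-\alpha^{1/2}q^{1/2})$ and $(\alpha c_1,\alpha c_2,\alpha c_3)=(\alpha^{1/3}q^{1/3},\alpha^{1/3}q^{2/3},\alpha^{1/3}q)$, the sign-pairing and cubic collapses of the infinite products, and the appeal to uniqueness of the expansion in the basis $(q/a;q)_n(a/q)^n/(\alpha a;q)_n$ to equate coefficients. The gap is the companion expansion carrying $R_n(\alpha)$: you describe it only as ``a genuine cubic transformation,'' explicitly defer it as the place ``where the delicate work lies,'' and moreover assert that it ``is not a direct application of any single summation stated earlier in the paper.'' That assessment is wrong, and without this step the proof is not complete.

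The missing idea is a cube-root-of-unity specialization of Rogers' $_6\phi_5$ summation (Theorem~\ref{rogersthm}). Replace $(q,\alpha,a,b,c)$ there by $(q^{1/3},\alpha^{1/3},a^{1/3},a^{1/3}\omega,a^{1/3}\omega^2)$ with $\omega=e^{2\pi i/3}$. The elementary identity $(1-x)(1-x\omega)(1-x\omega^2)=1-x^3$ then collapses every triple of base-$q^{1/3}$ factorials involving $a$ into a single base-$q$ factorial: $(q^{1/3}/a^{1/3},\,q^{1/3}\omega^2/a^{1/3},\,q^{1/3}\omega/a^{1/3};q^{1/3})_n=(q/a;q)_n$ and $(\alpha^{1/3}a^{1/3},\,\alpha^{1/3}a^{1/3}\omega,\,\alpha^{1/3}a^{1/3}\omega^2;q^{1/3})_n=(\alpha a;q)_n$, while the argument of the series becomes $\alpha^{1/3}a/q^{2/3}=(a/q)(q\alpha)^{1/3}$. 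The left-hand side collapses to exactly your $F(a)$, and the $n$th term of the sum is exactly $R_n(\alpha)$ times the basis element $(1-\alpha q^{2n})(\alpha,q/a;q)_n(a/q)^n/\bigl((1-\alpha)(q,\alpha a;q)_n\bigr)$; the power $(q\alpha)^{n/3}$ and the quotient $(1-\alpha^{1/3}q^{2n/3})/(1-\alpha q^{2n})$ that worried you appear automatically from the very-well-poised factor of the specialized Rogers sum. With this single substitution supplied, your coefficient-matching argument closes, and the resulting proof coincides with the paper's.
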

\begin{proof} Let $\omega$ be the primitive cube root of unity given by $\omega=\exp ({2\pi i}/{3})$. Then we have
\[
(1-x)(1-x\omega)(1-x\omega^2)=1-x^3.
\]
If we replace $(q, \alpha, a, b, c)$ by $(q^{1/3}, \alpha^{1/3}, a^{1/3},a^{1/3}\omega, a^{1/3}\omega^2)$
in Theorem \ref{rogersthm} and then use the above identity in the resulting equation,  we find that
 \begin{align}
 &\frac{(\alpha a^2/q; q)_\infty (\alpha^{1/3}q^{1/3}; q^{1/3})_\infty}
 {(\alpha a; q)_\infty (\alpha^{1/3} a/q^{2/3}; q^{1/3})_\infty}\label{euler:eqn1}\\
 &=\sum_{n=0}^\infty \frac{(1-\alpha^{1/3}  q^{2n/3}) (\alpha^{1/3}; q^{1/3})_n (q/a; q)_n
 (a/q)^n (q\alpha)^{n/3}}{(1-\alpha^{1/3})(\alpha a; q)_n (q^{1/3}; q^{1/3})_n}.\nonumber
 \end{align}
 On the other hand, taking $m=3$ in Theorem \ref{liunewthmb}, and then letting
 $(\alpha b, \alpha b_1, \alpha b_2, \alpha b_3 )= (\alpha^{1/2}q, -\alpha^{1/2}q, \alpha^{1/2}q^{1/2}, -\alpha^{1/2}q^{1/2})$
 and $(\alpha c_1, \alpha c_2, \alpha c_3)=(\alpha^{1/3}q^{1/3}, \alpha^{1/3}q^{2/3}, \alpha^{1/3} q),$
 we are led to derive the identity
  \begin{align*}
 &\frac{(\alpha a^2/q; q)_\infty (\alpha^{1/3}q^{1/3}; q^{1/3})_\infty}
 {(\alpha a; q)_\infty (\alpha^{1/3} a/q^{2/3}; q^{1/3})_\infty}\label{euler:eqn2}\\
&=\sum_{n=0}^\infty  \frac{(1-\alpha q^{2n}) (\alpha, q/a; q)_n (a/q)^n}
{(1-\alpha)(q, \alpha a; q)_n}{_5\phi_4} \left({{q^{-n}, \alpha q^n, \alpha^{1/3}q^{1/3}, \alpha^{1/3}q^{2/3}, \alpha^{1/3} q}
\atop {\alpha^{1/2}q, -\alpha^{1/2}q, \alpha^{1/2}q^{1/2}, -\alpha^{1/2}q^{1/2}}} ;  q, q \right),
 \end{align*}
 Combining the above two equations, we conclude that
 \begin{align*}
 &\sum_{n=0}^\infty \frac{(1-\alpha^{1/3}  q^{2n/3}) (\alpha^{1/3}; q^{1/3})_n (q/a; q)_n
 (a/q)^n (q\alpha)^{n/3}}{(1-\alpha^{1/3})(\alpha a; q)_n (q^{1/3}; q^{1/3})_n}\\
 &=\sum_{n=0}^\infty  \frac{(1-\alpha q^{2n}) (\alpha, q/a; q)_n (a/q)^n}
{(1-\alpha)(q, \alpha a; q)_n}{_5\phi_4} \left({{q^{-n}, \alpha q^n, \alpha^{1/3}q^{1/3}, \alpha^{1/3}q^{2/3}, \alpha^{1/3} q}
\atop {\alpha^{1/2}q, -\alpha^{1/2}q, \alpha^{1/2}q^{1/2}, -\alpha^{1/2}q^{1/2}}} ;  q, q \right).
 \end{align*}
 Appealing to Theorem \ref{liuthm1}, we can equate the coefficients of $a^n(q/a; q)_n/(\alpha a; q)_n$ 
 on both sides of the above equation to complete the proof
 of Proposition \ref{andrewspp1}.
\end{proof}
 Replacing $\alpha$ by $\alpha^3$ and $q$ by $q^3$ in (\ref{euler:eqn1}) and then setting $\alpha=1$ and $a=-q^3,$
 we deduce that (see, for example, \cite[Eq. (13)]{Andrews+Lewis+Liu})
\begin{equation}
\phi(-q)\phi(-q^3)=\frac{(q; q)_\infty(q^3; q^3)_\infty}
{(-q; q)_\infty (-q^3; q^3)_\infty}
=1+2\sum_{n=1}^\infty (-1)^n \frac{q^n(1+q^n)}{1+q^{3n}}.
\label{euler:eqn2}
\end{equation}
\begin{prop}\label{andrewspp2} {(\rm Andrews)}  We have the summation formula
\begin{align*}
&{_5\phi_4} \left({{q^{-n}, \alpha q^n, \alpha^{1/3}, \alpha^{1/3} e^{2\pi i/3}, \alpha^{1/3} e^{4\pi i/3}}
\atop {\sqrt{\alpha}, -\sqrt{\alpha}, \sqrt{q \alpha}, -\sqrt{q \alpha}}} ;  q, q \right)
\label{andrews:eqn2}\\
&=\begin{cases} 0 &\text {if $n \not \equiv  0 \pmod 3$}\\
\frac{(\alpha; q^3)_l (q; q)_{3l}\alpha^l}{(\alpha; q)_{3l}(q^3; q^3)_l}, & \text{if $n=3l$ }.
\end{cases}
\nonumber
\end{align*}
\end{prop}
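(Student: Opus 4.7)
The plan is to parallel the derivation of Proposition~\ref{andrewspp1}: produce two series expansions of the same infinite product, one coming from a suitable specialisation of Rogers' $_6\phi_5$ summation (Theorem~\ref{rogersthm}) and the other from Theorem~\ref{liunewthmb} with $m=3$, and then invoke the uniqueness provided by Theorem~\ref{liuthm1} to match coefficients. The new feature here is that the denominator parameters $\pm\sqrt\alpha,\pm\sqrt{q\alpha}$ of the target $_5\phi_4$ suggest a base change $q\mapsto q^3$ in Rogers' formula in place of the $q\mapsto q^{1/3}$ used for Proposition~\ref{andrewspp1}, and this asymmetry is precisely what forces the selection rule $3\mid n$.

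First I would apply Theorem~\ref{liunewthmb} with $m=3$ and parameters
\[
\alpha b=\sqrt\alpha,\quad \alpha b_1=-\sqrt\alpha,\quad \alpha b_2=\sqrt{q\alpha},\quad \alpha b_3=-\sqrt{q\alpha},
\]
\[
\alpha c_1=\alpha^{1/3},\quad \alpha c_2=\alpha^{1/3}e^{2\pi i/3},\quad \alpha c_3=\alpha^{1/3}e^{4\pi i/3}.
\]
Using $(\sqrt{x},-\sqrt{x};q)_\infty=(x;q^2)_\infty$ to combine the $b$-parameters in pairs and the cube-root-of-unity identity $(y,ye^{2\pi i/3},ye^{4\pi i/3};q)_\infty=(y^3;q)_\infty$ to combine the $c$-parameters, the left-hand side of Theorem~\ref{liunewthmb} telescopes to
\[
\frac{(a^2\alpha/q^2;q)_\infty(\alpha;q^3)_\infty}{(1-\alpha)(\alpha a;q)_\infty(a^3\alpha/q^3;q^3)_\infty},
\]
while the right-hand side becomes $\sum_{n\ge 0}B_n H_n$, where $H_n$ denotes the $_5\phi_4$ in the statement of the proposition and $B_n=(1-\alpha q^{2n})(\alpha,q/a;q)_n(a/q)^n/[(1-\alpha)(q,\alpha a;q)_n]$.

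Next I would specialise Rogers' $_6\phi_5$ summation by the substitution $q\mapsto q^3$ together with $(a,b,c)\mapsto(a,aq,aq^2)$, which is valid provided $|a^3\alpha/q^3|<1$. The collapse $(x,xq,xq^2;q^3)_\infty=(x;q)_\infty$ applied twice makes the right-hand side of Rogers' identity coincide with the infinite product displayed above, while on the left-hand side the analogous identity $(y,yq,yq^2;q^3)_n=(y;q)_{3n}$ simplifies the series to
\[
\sum_{l=0}^\infty \frac{(1-\alpha q^{6l})(\alpha;q^3)_l(q/a;q)_{3l}}{(1-\alpha)(q^3;q^3)_l(\alpha a;q)_{3l}}\left(\frac{a^3\alpha}{q^3}\right)^l.
\]

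Finally I would equate the two series expansions. Two expansions in the basis $\{(q/a;q)_n a^n/(\alpha a;q)_n\}_{n\ge 0}$ have unique coefficients (this follows from Theorem~\ref{liuthm1} after replacing $a$ by $a/\alpha$), so I can read off coefficients term by term. Only indices $n=3l$ appear on the Rogers side, forcing $H_n=0$ whenever $n\not\equiv 0\pmod 3$; for $n=3l$, cancelling $B_{3l}$ against the Rogers coefficient produces
\[
H_{3l}=\frac{(\alpha;q^3)_l(q;q)_{3l}\alpha^l}{(\alpha;q)_{3l}(q^3;q^3)_l},
\]
which is the claim. The main obstacle will be the careful bookkeeping in the two product simplifications on the Theorem~\ref{liunewthmb} side; once the pairings $\{\sqrt\alpha,-\sqrt\alpha\}$ and $\{\sqrt{q\alpha},-\sqrt{q\alpha}\}$ among the $b$-parameters and the cube-root-of-unity pairing among the $c$-parameters are handled correctly, every infinite product collapses exactly as required.
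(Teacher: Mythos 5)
Your proposal is correct and follows essentially the same route as the paper: specialize Rogers' $_6\phi_5$ with $q\mapsto q^3$ and $(b,c)=(qa,q^2a)$, match it against the $m=3$ case of Theorem~\ref{liunewthmb} with exactly the parameter choices you list, and equate coefficients of $(q/a;q)_n a^n/(\alpha a;q)_n$ via Theorem~\ref{liuthm1}. All the product collapses you describe check out, so no further comment is needed.
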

This identity was first proved by Andrews \cite[Eq. (4.7)]{Andrews79}. For other proofs,
see \cite[Eq. (4.4d)]{Chu1994} and \cite[Eq. (4.32)]{GesselStanton}.
\begin{proof} If we first replace $q$ by $q^3$ in Theorem \ref{rogersthm} and then setting $b=qa$ and $c=q^2a$ in
the resulting equation, we obtain
\begin{equation*}
\sum_{n=0}^\infty \frac{(1-\alpha q^{6n}) (q/a; q)_{3n} (\alpha; q^3)_n \alpha^n (a/q)^{3n}}{(\alpha a; q)_{3n}(q^3; q^3)_n}
=\frac{(\alpha a^2/q^2; q)_\infty (\alpha; q^3)_\infty}{(\alpha a, q)_\infty (\alpha a^3/q^3; q^3)_\infty}.
\end{equation*}
Using the same argument that we used to prove Proposition \ref{andrewspp1},  from Theorem \ref{liunewthmb} we can deduce that
\begin{align*}
&\frac{(\alpha a^2/q^2; q)_\infty (\alpha; q^3)_\infty}{(\alpha a, q)_\infty (\alpha a^3/q^3; q^3)_\infty}\\
&=\sum_{n=0}^\infty  \frac{(1-\alpha q^{2n}) (\alpha, q/a; q)_n (a/q)^n}
{(q, \alpha a; q)_n}{_5\phi_4} \left({{q^{-n}, \alpha q^n, \alpha^{1/3}, \alpha^{1/3} e^{2\pi i/3}, \alpha^{1/3} e^{4\pi i/3}}
\atop {\sqrt{\alpha}, -\sqrt{\alpha}, \sqrt{q \alpha}, -\sqrt{q \alpha}}} ;  q, q \right).
\end{align*}
Combining the above two equations, we arrive at the $q$-identity
\begin{align*}
&\sum_{n=0}^\infty \frac{(1-\alpha q^{6n}) (q/a; q)_{3n} (\alpha; q^3)_n \alpha^n (a/q)^{3n}}{(\alpha a; q)_{3n}(q^3; q^3)_n}\\
&=\sum_{n=0}^\infty  \frac{(1-\alpha q^{2n}) (\alpha, q/a; q)_n (a/q)^n}
{(1-\alpha)(q, \alpha a; q)_n}{_5\phi_4} \left({{q^{-n}, \alpha q^n, \alpha^{1/3}, \alpha^{1/3} e^{2\pi i/3}, \alpha^{1/3} e^{4\pi i/3}}
\atop {\sqrt{\alpha}, -\sqrt{\alpha}, \sqrt{q \alpha}, -\sqrt{q \alpha}}} ;  q, q \right).
\end{align*}
Using Theorem \ref{liuthm1}, we compare the coefficients of $a^n(q/a; q)_n/(\alpha a; q)_n$  to complete the proof
of Proposition \ref{andrewspp2}.
\end{proof}
The $q$-Watson formula due to Andrews \cite {Andrews76} can be sated in the following proposition.
\begin{prop} \label{andrewspp3} {(\rm Andrews)}We have the $q$-formula
\begin{equation*}
{_4\phi_3} \left({{q^{-n}, \alpha q^n, \sqrt{\lambda}, -\sqrt{\lambda}} \atop {\sqrt{q \alpha}, -\sqrt{q \alpha}, \lambda}} ;  q, q \right)
=\begin{cases} 0 &\text {if $n$ is odd}\\
\frac{(q, \alpha q/\lambda; q^2)_{n/2} \lambda^{n/2}}{(q\alpha, q\lambda; q^2)_{n/2}}, & \text{if $n$ is even}.
\end{cases}
\end{equation*}
\end{prop}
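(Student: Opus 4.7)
My plan mirrors the strategy used in the proofs of Propositions \ref{andrewspp1} and \ref{andrewspp2}: I will produce two expansions of the same infinite product in the variable $a$, one coming from Theorem \ref{liunewthmb} (which puts the target $_4\phi_3$ inside the summand) and one coming from Rogers' $_6\phi_5$ (Theorem \ref{rogersthm}) applied with base $q^2$, and then invoke Theorem \ref{liuthm1} to equate coefficients of $a^n(q/a;q)_n/(\alpha a;q)_n$.

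First, I set $m=2$ in Theorem \ref{liunewthmb} and specialize the parameters to
\[
\alpha b=\sqrt{q\alpha},\quad \alpha b_1=-\sqrt{q\alpha},\quad \alpha b_2=\lambda,\quad \alpha c_1=\sqrt{\lambda},\quad \alpha c_2=-\sqrt{\lambda},
\]
so that the $_4\phi_3$ inside the summand becomes exactly the series $S_n(\alpha,\lambda)$ in Proposition \ref{andrewspp3}. Using the pair identities $(x;q)_\infty(-x;q)_\infty=(x^2;q^2)_\infty$ and $(x;q)_\infty=(x;q^2)_\infty(xq;q^2)_\infty$, the left-hand product collapses to the \emph{purely base-$q^2$} form
\[
F(a)=\frac{(q^2\alpha,\ a^2\alpha/q,\ a\lambda/q,\ a\lambda;\ q^2)_\infty}{(\alpha a,\ q\alpha a,\ q\lambda,\ a^2\lambda/q^2;\ q^2)_\infty}.
\]
Thus one expansion of $F(a)$ reads
\[
F(a)=\sum_{n=0}^\infty \frac{(1-\alpha q^{2n})(\alpha,q/a;q)_n (a/q)^n}{(1-\alpha)(q,\alpha a;q)_n}\,S_n(\alpha,\lambda).
\]

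Next I apply Theorem \ref{rogersthm} with $q$ replaced by $q^2$ and with the parameters $(a,b,c)\mapsto(a,\,qa,\,q\lambda/\alpha)$. A short verification shows that the right-hand product in Rogers coincides exactly with $F(a)$ above, giving the second expansion
\[
F(a)=\sum_{m=0}^\infty \frac{(1-\alpha q^{4m})(\alpha,q^2/a,q/a,q\alpha/\lambda;q^2)_m}{(1-\alpha)(q^2,\alpha a,q\alpha a,q\lambda;q^2)_m}\left(\frac{a^2\lambda}{q^2}\right)^{\!m}.
\]
Now I use $(q^2/a,q/a;q^2)_m=(q/a;q)_{2m}$, $(\alpha a,q\alpha a;q^2)_m=(\alpha a;q)_{2m}$, and $(a^2\lambda/q^2)^m=\lambda^m(a/q)^{2m}$ to recast this second expansion as a sum over the even-indexed basis elements $(q/a;q)_{2m}(a/q)^{2m}/(\alpha a;q)_{2m}$.

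By Theorem \ref{liuthm1}, the expansion of an analytic function of $a$ in this basis is unique. Comparing the two expressions for $F(a)$ term by term, the odd-indexed basis elements have coefficient $0$ in the Rogers expansion, which forces $S_n(\alpha,\lambda)=0$ for odd $n$. For $n=2m$, one equates coefficients and uses the splittings $(\alpha;q)_{2m}=(\alpha;q^2)_m(q\alpha;q^2)_m$ and $(q;q)_{2m}=(q;q^2)_m(q^2;q^2)_m$ to obtain
\[
S_{2m}(\alpha,\lambda)=\frac{(q,\,q\alpha/\lambda;q^2)_m\,\lambda^m}{(q\alpha,\,q\lambda;q^2)_m},
\]
which is the desired closed form. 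The main obstacle is the second step: recognizing that after the Gauss-like pairing and splitting, the infinite product $F(a)$ is exactly the right-hand side of Rogers' $_6\phi_5$ in base $q^2$, and finding the correct triple of parameters $(a,qa,q\lambda/\alpha)$ to match it. Once this identification is made, the remainder is routine Pochhammer-symbol bookkeeping.
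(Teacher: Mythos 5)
Your proposal is correct and follows essentially the same route as the paper: Theorem \ref{liunewthmb} with $m=2$ and the specialization $(\alpha c_1,\alpha c_2,\alpha b_1,\alpha b_2,\alpha b)=(\sqrt{\lambda},-\sqrt{\lambda},\sqrt{q\alpha},-\sqrt{q\alpha},\lambda)$ on one side, Rogers' $_6\phi_5$ in base $q^2$ with $b=aq$, $c=q\lambda/\alpha$ on the other, and Theorem \ref{liuthm1} to equate coefficients of $a^n(q/a;q)_n/(\alpha a;q)_n$. The identification of the common product and the even/odd coefficient comparison match the paper's argument.
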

\begin{proof} Replacing $q$ by $q^2$ in Theorem~\ref{rogersthm} and then setting $b=aq$ and $c=q\lambda/\alpha,$ we
find that
\begin{equation*}
\frac{(\alpha q, \lambda a/q; q)_\infty (\lambda, \alpha a^2/q; q^2)_\infty}
{(\alpha a, \lambda; q)_\infty (q\alpha, \lambda a^2/q; q^2)_\infty}=
\sum_{n=0}^\infty \frac{(1-\alpha q^{4n})(\alpha, q/a; q)_{2n} (q, q\alpha/\lambda; q^2)_n (a/q)^{2n}\lambda^n}
{(1-\alpha)(q, \alpha a; q)_{2n} (q\alpha, q\lambda; q^2)_n}
\end{equation*}
Taking $m=2$ and $(\alpha c_1, \alpha c_2, \alpha b_1, \alpha b_2, \alpha b )
=(\sqrt{\lambda}, -\sqrt{\lambda}, \sqrt{q\alpha}, -\sqrt{q\alpha}, \lambda)$ in Theorem \ref{liunewthmb},
we find that the left-hand side member of the above equation also equals
\begin{align*}
\sum_{n=0}^\infty  \frac{(1-\alpha q^{2n}) (\alpha, q/a; q)_n (a/q)^n}
{(1-\alpha)(q, \alpha a; q)_n} {_4\phi_3}
 \left({{q^{-n}, \alpha q^n, \sqrt{\lambda}, -\sqrt{\lambda}} \atop {\sqrt{q \alpha}, -\sqrt{q \alpha}, \lambda}} ;  q, q \right).
\end{align*}
Thus we have
\begin{align*}
&\sum_{n=0}^\infty  \frac{(1-\alpha q^{2n}) (\alpha, q/a; q)_n (a/q)^n}
{(1-\alpha)(q, \alpha a; q)_n} {_4\phi_3}
 \left({{q^{-n}, \alpha q^n, \sqrt{\lambda}, -\sqrt{\lambda}} \atop {\sqrt{q \alpha}, -\sqrt{q \alpha}, \lambda}} ;  q, q \right)\\
 &=\sum_{n=0}^\infty \frac{(1-\alpha q^{4n})(\alpha, q/a; q)_{2n} (q, q\alpha/\lambda; q^2)_n (a/q)^{2n}\lambda^n}
{(1-\alpha)(q, \alpha a; q)_{2n} (q\alpha, q\lambda; q^2)_n}.
\end{align*}
Using Theorem \ref{liuthm1}, we can equate the coefficients of $a^n(q/a; q)_n/(\alpha a; q)_n$ on both sides of  the above equation,
to complete the proof of Proposition \ref{andrewspp3}.
\end{proof}
Verma and Jain \cite[Eq. (5.4)]{VermaJain}  proved the following series summation  formula.
\begin{prop} \label{andrewspp4} {\rm (Verma and Jain )} We have the summation formula
\begin{equation*}
{_4\phi_3} \left({{q^{-2n}, \alpha^2 q^{2n}, \lambda, q\lambda} \atop { q\alpha, q^2 \alpha, \lambda^2}} ;  q^2, q^2 \right)
=\frac{\lambda^n (-q, q\alpha /\lambda; q)_n (1-\alpha)}{(\alpha, -\lambda; q)_n (1-\alpha q^{2n})}.
\end{equation*}
\end{prop}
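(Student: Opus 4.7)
The plan is to imitate the template of Propositions~\ref{andrewspp1}--\ref{andrewspp3}: exhibit the same infinite product in two different ways---once using Theorem~\ref{liunewthmb} in a form that produces the target $_4\phi_3$ as the inner sum, and once using Rogers' $_6\phi_5$ (Theorem~\ref{rogersthm}) in a form whose summand is explicit---and then extract the $_4\phi_3$ by equating coefficients via Theorem~\ref{liuthm1}.

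First I would replace $q$ by $q^2$ and $\alpha$ by $\alpha^2$ in Theorem~\ref{liunewthmb}, take $m=2$, and specialize
\[
(\alpha^2 b,\,\alpha^2 b_1,\,\alpha^2 b_2,\,\alpha^2 c_1,\,\alpha^2 c_2)=(\lambda^2,\,q\alpha,\,q^2\alpha,\,\lambda,\,q\lambda),
\]
so that the inner $_{m+2}\phi_{m+1}$ is exactly the target $_4\phi_3$. Collapsing consecutive $q^2$-Pochhammers via $(x;q^2)_\infty(qx;q^2)_\infty=(x;q)_\infty$, the infinite product on the left simplifies to
\[
P=\frac{(\alpha^2 q^2,\,a\lambda^2/q^2;q^2)_\infty(a\alpha/q,\,\lambda;q)_\infty}{(\alpha^2 a,\,\lambda^2;q^2)_\infty(a\lambda/q^2,\,q\alpha;q)_\infty}.
\]

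Second, I would apply Theorem~\ref{rogersthm} with the substitutions $\alpha\mapsto-\alpha$, $a\mapsto s$, $b\mapsto-s$, $c\mapsto\lambda/\alpha$, treating $s$ as a formal variable with $s^2=a$. The key observation is that the four pairings
\[
(q/s,-q/s;q)_n=(q^2/a;q^2)_n,\qquad (\alpha s,-\alpha s;q)_n=(\alpha^2 a;q^2)_n,
\]
\[
(\lambda s/q,-\lambda s/q;q)_\infty=(a\lambda^2/q^2;q^2)_\infty,\qquad (q\alpha,-q\alpha;q)_\infty=(\alpha^2 q^2;q^2)_\infty,
\]
make every apparent $s$-dependence resolve into $s^2=a$; a direct check shows that Rogers' product side collapses to precisely $P$, while its summand becomes
\[
\frac{(1+\alpha q^{2n})(-\alpha;q)_n(q^2/a;q^2)_n(q\alpha/\lambda;q)_n}{(1+\alpha)(q;q)_n(\alpha^2 a;q^2)_n(-\lambda;q)_n}\,(a\lambda/q^2)^n.
\]

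Equating the two expansions of $P$ and applying Theorem~\ref{liuthm1} with $q\to q^2$ to compare the coefficients of $a^n(q^2/a;q^2)_n/(\alpha^2 a;q^2)_n$ isolates the $_4\phi_3$. Using $(\alpha^2;q^2)_n=(\alpha;q)_n(-\alpha;q)_n$, $(q^2;q^2)_n=(q;q)_n(-q;q)_n$, and
\[
\frac{(1-\alpha^2 q^{4n})(1-\alpha)}{(1-\alpha^2)(1-\alpha q^{2n})}=\frac{1+\alpha q^{2n}}{1+\alpha},
\]
the remaining scalar collapses precisely to $\lambda^n(-q,q\alpha/\lambda;q)_n(1-\alpha)/[(\alpha,-\lambda;q)_n(1-\alpha q^{2n})]$, which is the desired closed form. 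The main obstacle is recognizing the square-root substitution $(a,b)\mapsto(s,-s)$ in Rogers' $_6\phi_5$: this is not the kind of specialization suggested by Propositions~\ref{andrewspp1}--\ref{andrewspp3}, and without the four pairing identities above one cannot see a priori that the sum depends on $s$ only through $s^2=a$ or that its closed form coincides with $P$.
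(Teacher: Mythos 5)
Your proposal is correct and follows essentially the same route as the paper: the paper likewise substitutes $(\alpha,a,b,c)\mapsto(-\alpha,\sqrt{a},-\sqrt{a},\lambda/\alpha)$ in Rogers' $_6\phi_5$ and takes $m=2$ in Theorem~\ref{liunewthmb} with $q\to q^2$, $\alpha\to\alpha^2$ and the identical parameter choices $(\alpha^2c_1,\alpha^2c_2,\alpha^2b,\alpha^2b_1,\alpha^2b_2)=(\lambda,q\lambda,\lambda^2,q\alpha,q^2\alpha)$, then equates coefficients of $a^n(q^2/a;q^2)_n/(\alpha^2a;q^2)_n$ via Theorem~\ref{liuthm1}. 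Your pairing identities and the final simplification both check out.
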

\begin{proof} Replacing $\alpha$ by $-\alpha$ and then $( a, b, c)$ by $(\sqrt{a}, -\sqrt{a},\lambda/\alpha )$
in Theorem \ref{rogersthm}, we deduce  that
\[
\frac{(\lambda^2 a/q^2; q^2)_\infty (-\alpha, \alpha a/q; q)_\infty}
{(\alpha^2 a; q^2)_\infty (-\lambda, \lambda a/q^2; q)_\infty}
=\sum_{n=0}^\infty \frac{(1+\alpha q^{2n})(q^2/a; q^2)_n (q\alpha/\lambda, -\alpha; q)_n (\lambda a/q^2)^n}
{(\alpha^2 a; q^2)_n (q, -\lambda; q)_n}.
\]
Taking $m=2$ in Theorem \ref{liunewthmb}, then replacing $q$ by $q^2$ and $\alpha$ by $\alpha^2$,
and finally replacing $(\alpha^2 c_1,  \alpha^2 c_2, \alpha^2 b, \alpha^2 b_1,  \alpha^2 b_2)$ by
$(\lambda, q\lambda, \lambda^2,  q\alpha, q^2\alpha)$
we find that
 \begin{align*}
&\frac{(\lambda^2 a/q^2; q^2)_\infty (-\alpha, \alpha a/q; q)_\infty}
{(\alpha^2 a; q^2)_\infty (-\lambda, \lambda a/q^2; q)_\infty} \\
&=\sum_{n=0}^\infty  \frac{(1-\alpha^2 q^{4n}) (\alpha^2, q^2/a; q^2)_n (a/q^2)^n}
{(1-\alpha)(q^2, \alpha^2 a; q^2)_n}
{_{4}\phi_{3}} \left({{q^{-2n}, \alpha^2 q^{2n}, \lambda, q\lambda} \atop { q\alpha, q^2 \alpha, \lambda^2}} ;  q^2, q^2 \right).
 \end{align*}
 Equating the coefficients of $a^n(q^2/a; q^2)_n/(\alpha^2 a; q^2)_n$ on the right-hand side of the above two equations,
 we complete the proof of Proposition \ref{andrewspp4}.
\end{proof}


\section{Acknowledgments}
I am  grateful to the referee for many very helpful comments and suggestions.


\begin{thebibliography}{99}

\bibitem{SalamVerma} W. A. Al-Salam and A. Verma,
Some remarks on $q$-beta integral, Proc. Amer. Math. Soc. 85 (1982)
360--362.

\bibitem{Andrews76} G. E. Andrews,
On $q$-analogues of the Watson and Whipple summations, SIAM J. Math. Anal. {\bf 7} (1976) 332--336.

\bibitem{Andrews79}G. E. Andrews,
Connection coefficient problems and partitions (D. Ray-Chaudhuri, ed.), Proc. Sympos. Pure Math. {\bf 34} (1979) 1--24.

\bibitem{Andrews+Lewis+Liu} G. E. Andrews,  R. P. Lewis and  Z.-G. Liu,
 An identity relating a theta function to a sum of Lambert series,
 Bull. London. Math. Soc. {\bf 33} (2001) 25--31.


\bibitem{Ask+Wil} R. Askey and J. Wilson,
Some basic hypergeometric orthogonal polynomials that generalize
Jacobi polynomials, Memoirs. Amer. Math. Soc. 54, No. 319 (1985).

\bibitem{AskeyRoy}R. Askey and R.  Roy,
More $q$-beta integrals, Rocky Mountain J. Math.  16  (1986) 365--372.












\bibitem{Chu1994}W.  Chu,
 Inversion techniques and combinatorial identities: Strange evaluations of basic hypergeometric series,
 Compositio Math. {\bf 91} (1994)121-144.



\bibitem{GesselStanton} I.  Gessel and D.   Stanton,
Applications of $q$-Lagrange inversion to basic hypergeometric series. Trans. Amer. Math. Soc. {\bf 277} (1983)
173--201.



\bibitem{Gas+Rah}  G. Gasper and M. Rahman,
Basic Hypergeometric series,  2nd ed., Cambridge Univ. Press,
Cambridge, 2004.



\bibitem{Jackson1908}
F. H. Jackson,
On $q$-functions and a certain difference operator,
Trans. Roy. Soc. Edinburgh 46 (1908)253--281.

\bibitem{kalninsMiller} E. G. Kalnins and  W.  Miller,
$q$-series and orthogonal polynomials associated with Barnes' first lemma,
SIAM J. Math. Anal.  19  (1988) 1216--1231.

\bibitem{KLS} R. Koekoek, P. A. Lesky and R. F. Swarttouw,
Hypergeometric orthogonal polynomials and their $q$-analogues, Springer-Verlag, Berlin, 2010.


\bibitem{Liu}  Z.-G. Liu,
An expansion formula for $q$-series and applications,   Ramanujan
J. {\bf 6} (2002) 429--447.


\bibitem{LiuDMath}Z.-G. Liu,
Some operator identities and $q$-series transformation formulas,
Discrete Math. 265 (2003) 119--139.



\bibitem{Liu2004} Z.-G.  Liu,
An identity of Ramanujan and the representation of integers as sums of triangular numbers
Ramanujan J. {\bf 7} (2003) 407--434.

\bibitem{Liu2010} Z.-G. Liu,  Two $q$-difference equations and $q$-operator
identities,  J. Difference Equ. Appl. 16 (2010) 1293--1307.


 \bibitem{Liu2011}Z.-G. Liu,
An extension of the non-terminating $_6\phi_5$ summation and the Askey-Wilson polynomials,
J. Difference Equ. Appl. 17 (2011) 1401--1411.


\bibitem{Liu2013}  Z.-G. Liu,
A $q$-series expansion formula and the Askey-Wilson polynomials, Ramanujan J.
{\bf 30} (2013) 193--210.



\bibitem{Liu2013qd}Z.-G. Liu,
On the $q$-derivative and $q$-series expansions,
Int. J. Number Theory 9(2013) 2069--2089.


\bibitem{LiuRamanujanP} Z.-G. Liu,
On the $q$-partial differential equations and $q$-series, The Legacy of Srinivasa Ramanujan, 213-250,
Ramanujan Math. Soc. Lect. Notes Ser., 20, Ramanujan Math. Soc., Mysore, 2013.




\bibitem{NassrallahRahman} B. Nassrallah and M. Rahman,
Projection formulas, a reproducing kernel and a
generating function for $q$-Wilson polynomials, SIAM J. Math. Anal. 16 (1985)
186--197.

\bibitem{VermaJain}  A. Verma and V. K. Jain,
Transformations between basic hypergeometric series on different bases and identities of Rogers-Ramanujan type,
J. Math Anal. Appl. {\bf 76} (1980) 230-269.




\end{thebibliography}
\end{document}